\newtheorem{theorem}{Theorem}[section]
\newtheorem{lemma}[theorem]{Lemma}
\newtheorem{corollary}[theorem]{Corollary}
\newtheorem{remark}[theorem]{Remark}
\newproof{proof}{Proof}
\numberwithin{equation}{section}
\newcommand{\cB}{\mathcal{B}}
\newcommand{\cP}{\mathcal{P}}
\newcommand{\cW}{\mathcal{W}}
\newcommand{\C}{\mathbb{C}}
\newcommand{\D}{\mathbb{D}}
\newcommand{\I}{\mathbb{I}}
\newcommand{\N}{\mathbb{N}}
\newcommand{\T}{\mathbb{T}}
\newcommand{\Z}{\mathbb{Z}}
\newcommand{\eps}{\varepsilon}
\begin{document}
\begin{frontmatter}
\title{The Brown-Halmos Theorem for a Pair of Abstract Hardy Spaces}

\author[AK]{Alexei Karlovich\corref{Alexei}}
\ead{oyk@fct.unl.pt}

\author[ES]{Eugene Shargorodsky}
\ead{eugene.shargorodsky@kcl.ac.uk}

\cortext[Alexei]{Corresponding author}

\address[AK]{%%
Centro de Matem\'atica e Aplica\c{c}\~oes,
Departamento de Matem\'atica,
Faculdade de Ci\^encias e Tecnologia,\\
Universidade Nova de Lisboa,
Quinta da Torre,
2829--516 Caparica,
Portugal}

\address[ES]{%
Department of Mathematics,
King's College London,
Strand, London WC2R 2LS,
United Kingdom}
%%%----------------------------------------------------------------------------
\begin{abstract}
Let $H[X]$ and $H[Y]$ be abstract Hardy spaces built upon Banach function 
spaces $X$ and $Y$ over the unit circle $\T$. We prove an analogue of the
Brown-Halmos theorem for Toeplitz operators $T_a$ acting from $H[X]$ to $H[Y]$
under the only assumption that the space $X$ is separable and the Riesz
projection $P$ is bounded on the space $Y$. We specify our results to the 
case of variable Lebesgue spaces $X=L^{p(\cdot)}$ and $Y=L^{q(\cdot)}$ and to 
the case of Lorentz spaces $X=Y=L^{p,q}(w)$, $1<p<\infty$, $1\le q<\infty$ 
with Muckenhoupt weights $w\in A_p(\T)$.
\end{abstract}

\begin{keyword}
Toeplitz operator \sep 
Banach function space \sep
pointwise multiplier \sep
Brown-Halmos theorem \sep
variable Lebesgue space \sep
Lorentz space \sep
Muckenhoupt weight.
\end{keyword}
\end{frontmatter}
%%%------------------------------------------------------------------------
\section{Introduction}
For $1\le p\le\infty$, let $L^p:=L^p(\T)$ represent the standard Lebesgue
space on the unit circle $\T$ in the complex plane $\C$ with respect to the
normalized Lebesgue measure $dm(t)=|dt|/(2\pi)$.  For $f\in L^1$, let
\[
\widehat{f}(n):=\frac{1}{2\pi}
\int_{-\pi}^\pi f(e^{i\varphi})e^{-in\varphi}\,d\varphi,
\quad n\in\Z,
\]
be the sequence of the Fourier coefficients of $f$. For
$1\le p\le\infty$, the classical Hardy spaces $H^p$ are defined by
\[
H^p:=\big\{f\in L^p\ :\ \widehat{f}(n)=0\quad\mbox{for all}\quad n<0\big\}.
\]
Consider the operators $S$ and $P$, defined for a function $f\in L^1$ 
and an a.e. point $t\in\T$ by
%%%
\begin{equation}\label{eq:Cauchy-Riesz}
(Sf)(t):=\frac{1}{\pi i}\,\mbox{p.v.}\int_\T
\frac{f(\tau)}{\tau-t}\,d\tau,
\quad
(Pf)(t):=\frac{f(t)+(Sf)(t)}{2},
\end{equation}
%%%
respectively, where the integral is understood in the Cauchy principal 
value sense. The operator $S$ is called the Cauchy singular integral
operator. It is well known that the operators $P$ and $S$ are bounded
on $L^p$ if $p\in(1,\infty)$ and are not bounded on $L^p$ if 
$p\in\{1,\infty\}$ (see, e.g., \cite[Section~4.4]{BK97} or
\cite[Section~1.42]{BS06}). Note that using the elementary
equality
\[
\frac{e^{i\theta}}{e^{i\theta}-e^{i\vartheta}}
=
\frac{1}{2}\left(1+i\cot\frac{\vartheta-\theta}{2}\right),
\quad\theta,\vartheta\in[-\pi,\pi],
\]
one can write for $f\in L^1$ and $\vartheta\in[-\pi,\pi]$,
\[
(Sf)(e^{i\vartheta})
=
\frac{1}{\pi}\,\mbox{p.v.}\int_{-\pi}^\pi
\frac{f(e^{i\theta})e^{i\theta}}{e^{i\theta}-e^{i\vartheta}}d\theta
=
\widehat{f}(0)+i(\mathcal{C}f)(e^{i\vartheta}),
\]
where the operator $\mathcal{C}$, called the Hilbert transform,
is defined for $f\in L^1$ by
%%%
\begin{equation}\label{eq:Hilbert}
(\mathcal{C}f)\left(e^{i\vartheta}\right) 
:=
\frac{1}{2\pi}\,\mbox{p.v.}
\int_{-\pi}^{\pi}f\left(e^{i\theta}\right) 
\cot\frac{\vartheta-\theta}{2}\,d\theta , 
\quad \vartheta \in [-\pi, \pi].
\end{equation}
%%%
Hence the definition of $Pf$ for $f\in L^1$ in terms of the Cauchy singular
integral operator given by the second equality in \eqref{eq:Cauchy-Riesz} 
is equivalent to the following definition in terms of the Hilbert transform
and the zeroth Fourier coefficient of $f$ (cf. \cite[p.~104]{G06} and
\cite[Section~1.43]{BS06}):
%%%
\begin{equation}\label{eq:Hilbert-Riesz}
Pf:=\frac{1}{2}(f+i\mathcal{C}f)+\frac{1}{2}\widehat{f}(0).
\end{equation}
%%%
If $f\in L^1$ is such that $Pf\in L^1$, then
\begin{equation}\label{Rc1}
\widehat{Pf}(n)=\widehat{f}(n)
\quad\mbox{for}\quad 
n\ge 0,
\qquad
\widehat{Pf}(n)=0
\quad\mbox{for}\quad 
n< 0.
\end{equation}
Since we are not able to provide a precise reference to this
well known fact, we will give its proof in Subsection~\ref{Rcoeff}.
Note that definitions \eqref{eq:Cauchy-Riesz} can be extended to more general
Jordan curves in place of $\T$ (see, e.g., \cite{BK97} and also 
\cite{K02,K03,K17-MJOM}), while definitions \eqref{eq:Hilbert} and
\eqref{eq:Hilbert-Riesz} are used only in the case of the unit circle.
If $1<p<\infty$, then the operator $P$ projects $L^p$ onto $H^p$.
In view of this fact, the operator $P$ is usually called the Riesz projection.

For $a\in L^\infty$, the Toeplitz operator $T_a$ with symbol $a$ on $H^p$, 
$1<p<\infty$, is defined by
\[
T_af=P(af),
\quad
f\in H^p.
\]
The theory of Toeplitz operators has its origins in the classical paper by 
Otto Toeplitz \cite{T11}. Brown and Halmos \cite[Theorem~4]{BH63/64} proved 
that an operator on $H^2$ is a Toeplitz operator if and only if its matrix 
with respect to the standard basis is a Toeplitz matrix, that is, an infinite 
matrix of the form $(a_{j-k})_{j,k=0}^\infty$ (see also 
\cite[Part~B, Theorem~4.1.4]{N02} and \cite[Theorem~1.8]{PKh82}). 
An analogue of this result is true for Toeplitz operators acting on $H^p$, 
$1<p<\infty$ (see \cite[Theorem~2.7]{BS06}). 
Tolokonnikov \cite{T87} was the first to study Toeplitz operators acting 
between different Hardy spaces $H^p$ and $H^q$. In particular,  
\cite[Theorem~4]{T87} contains a description of all symbols generating 
bounded Toeplitz operators from $H^p$ to $H^q$ for $0<p,q\le\infty$.

Let $X$ be a Banach function space. We postpone the precise definition until 
Subsection~\ref{subsec:BFS}. For the moment, we observe only that it is 
continuously embedded in $L^1$. Following \cite[p.~877]{Xu92},  
we consider the abstract Hardy space $H[X]$ built upon the space 
$X$, which is defined by
\[
H[X]:=\big\{f\in X:\ \widehat{f}(n)=0\quad\mbox{for all}\quad n<0\big\} .
\]
It is clear that if $1\le p\le\infty$, then $H[L^p]$ 
is the classical Hardy space $H^p$.
%%%----------------------------------------------------------------------------
\begin{lemma}\label{le:Riesz-projection}
If the operator $P$ defined by \eqref{eq:Cauchy-Riesz} is bounded on a 
Banach function space $X$ over the unit circle $\T$, then its image $P(X)$ 
coincides with the abstract Hardy space $H[X]$ built upon $X$.
\end{lemma}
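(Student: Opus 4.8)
The plan is to prove the two inclusions $P(X)\subseteq H[X]$ and $H[X]\subseteq P(X)$ separately, using only the continuous embedding $X\hookrightarrow L^1$, the formulas \eqref{Rc1}, and the uniqueness theorem for Fourier coefficients of $L^1$-functions.

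For the inclusion $P(X)\subseteq H[X]$, take $g\in P(X)$, so that $g=Pf$ for some $f\in X$. Since $P$ is bounded on $X$, we have $g\in X$, and since $X$ is continuously embedded into $L^1$, we have both $f\in L^1$ and $Pf=g\in L^1$. Hence \eqref{Rc1} applies and gives $\widehat{g}(n)=\widehat{Pf}(n)=0$ for all $n<0$, so $g\in H[X]$.

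For the reverse inclusion $H[X]\subseteq P(X)$, take $g\in H[X]$. Then $g\in X\subseteq L^1$ and, by boundedness of $P$ on $X$, also $Pg\in X\subseteq L^1$. Applying \eqref{Rc1} to $f=g$, we get $\widehat{Pg}(n)=\widehat{g}(n)$ for $n\ge 0$ and $\widehat{Pg}(n)=0$ for $n<0$; but $\widehat{g}(n)=0$ for $n<0$ as well because $g\in H[X]$, so $\widehat{Pg}(n)=\widehat{g}(n)$ for every $n\in\Z$. Since the Fourier coefficients of an $L^1$-function determine it uniquely, $Pg=g$, whence $g=Pg\in P(X)$.

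Combining the two inclusions yields $P(X)=H[X]$. The only points requiring care are the verification that \eqref{Rc1} is applicable in both directions — which is exactly where the continuous embedding $X\hookrightarrow L^1$ (needed so that $f$ and $Pf$ lie in $L^1$) together with the boundedness of $P$ on $X$ (needed so that $Pf\in X$) come in — and the invocation of uniqueness of Fourier coefficients in $L^1$; no other obstacle is anticipated.
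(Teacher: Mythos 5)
Your proof is correct and follows exactly the route the paper intends: it deduces both inclusions from formula \eqref{Rc1} together with the continuous embedding $X\subset L^1$ and the uniqueness theorem for Fourier series, which is precisely the argument the paper sketches in one line after the lemma. You have merely written out the details of that same argument, so no further comparison is needed.
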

%%%----------------------------------------------------------------------------

Since $X\subset L^1$, this lemma follows immediately from formula \eqref{Rc1}
and the uniquiness theorem for Fourier series (see, e.g., 
\cite[Chap.~1, Theorem~2.7]{Kat76}).

Thus, the operator $P$ projects the Banach function space $X$ onto 
the abstract Hardy space $H[X]$. We will call $P$ the Riesz projection
as in the case of the spaces $L^p$ with $1<p<\infty$.

The Brown-Halmos theorem was extended by the first author to abstract Hardy 
spaces $H[X]$ built upon reflexive rearrangement-invariant Banach function 
spaces $X$ with non-trivial Boyd indices \cite[Theorem~4.5]{K04}. 
Under this assumption, the Riesz projection $P$ is bounded on $X$.
Further, it was shown in 
\cite[Theorem~1]{K17} that the Brown-Halmos theorem remains true for abstract 
Hardy spaces built upon arbitrarily, not necessarily rearrangement-invariant, 
reflexive Banach function spaces $X$ under the assumption that the Riesz 
projection is bounded on $X$. In particular, it is true for the weighted Hardy 
spaces $H^p(w)$, $1<p<\infty$, with Muckenhoput weights $w\in A_p(\T)$ 
\cite[Corollary~9]{K17}.

The space of all bounded linear operators from a Banach space $E$ to a Banach
space $F$ is denoted by $\cB(E,F)$. We adopt the standard abbreviation 
$\cB(E)$ for $\cB(E,E)$. We will write $E=F$ if $E$ and $F$ coincide as sets 
and there are constants $c_1,c_2\in(0,\infty)$ such that 
$c_1\|f\|_E\le \|f\|_F\le c_2\|f\|_E$ for all $f\in E$, and 
$E\equiv F$ if $E$ and $F$ coincide as sets and $\|f\|_E=\|f\|_F$
for all $f\in E$.

The aim of this paper is to study Toeplitz operators acting 
between abstract Hardy spaces $H[X]$ and $H[Y]$ built upon different Banach 
function spaces $X$ and $Y$ over the unit circle $\T$. We extend further
the results by Le\'snik \cite{L17}, who additionally assumed that the Banach 
function spaces $X$ and $Y$ are rearrangement-invariant. 
Let $L^0$ be the space of all measurable complex-valued functions on $\T$.
Following \cite{MP89}, let $M(X,Y)$ 
denote the space of pointwise multipliers from $X$ to $Y$ defined by 
\[
M(X,Y):=\{f\in L^0\ :\ fg\in Y\text{ for all } g\in X\}
\]
and equipped with the natural operator norm 
\[
\|f\|_{M(X,Y)}=\|M_f\|_{\cB(X,Y)}
=\sup_{\|g\|_X\le 1}\|fg\|_Y.
\]
Here $M_f$ stands for the operator of multiplication
by $f$ defined by $(M_fg)(t)=f(t)g(t)$ for $t\in\T$.

In particular, $M(X,X)\equiv L^\infty$. Note that it may happen that the 
space $M(X,Y)$ contains only the zero function.
For instance, if $1\le p<q\le\infty$, then $M(L^p,L^q)=\{0\}$.
The continuous embedding $L^\infty\subset M(X,Y)$ 
holds if and only if $X\subset Y$ continuously. For example, if 
$1\le q\le p\le\infty$, then $L^p\subset L^q$ and 
$M(L^p,L^q)\equiv L^{r}$, where $1/r=1/q-1/p$. 
For these and many other properties and examples, we refer to 
\cite{KLM13,LT17,MP89,N16} (see also references therein).

If the Riesz projection $P$ is bounded on the space $Y$, then one 
can define the Toeplitz operator $T_a$ with symbol $a\in M(X,Y)$ by
\[
T_af=P(af),
\quad
f\in H[X]
\]
(cf. \cite{L17}). It follows from Lemma~\ref{le:Riesz-projection}
that $T_af\in H[Y]$ and, clearly,
\[
\|T_a\|_{\cB(H[X],H[Y])}\le \|P\|_{\cB(Y)}\|a\|_{M(X,Y)}.
\]

Let $X'$ be the associate space of $X$ (see Subsection~\ref{subsec:BFS}).
For $f\in X$ and $g\in X'$, put
\[
\langle f,g\rangle:=
\int_\T f(t)\overline{g(t)}\,dm(t).
\]
For $n\in\Z$ and $\tau\in\T$, put $\chi_n(\tau):=\tau^n$. Then the Fourier 
coefficients of a function $f\in L^1$ can be expressed by
$\widehat{f}(n)=\langle f,\chi_n\rangle$ for $n\in\Z$. With these notation,
our main result reads as follows.
%%%----------------------------------------------------------------------------
\begin{theorem}[\`a la Brown-Halmos]
\label{th:Brown-Halmos}
Let $X,Y$ be two Banach function spaces over the unit circle 
$\mathbb{T}$. Suppose that $X$ is separable and the
Riesz projection $P$ is bounded on the space 
$Y$.  If $A\in\cB(H[X],H[Y])$ and there exists a sequence $\{a_n\}_{n\in\Z}$ 
of complex numbers such that 
%%%
\begin{equation}\label{eq:Brown-Halmos-1}
\langle A\chi_j,\chi_k\rangle=a_{k-j} \quad\text{for all}\quad j,k\geq 0,
\end{equation}
%%%
then there is a function $a\in M(X,Y)$ such that $A=T_a$ and 
$\widehat{a}(n)=a_n$ for all $n\in\Z$. Moreover,
%%%
\begin{equation}\label{eq:Brown-Halmos-2}
\|a\|_{M(X,Y)}
\leq \|T_a\|_{\cB(H[X],H[Y])}\leq \|P\|_{\cB(Y)}\|a\|_{M(X,Y)}.
\end{equation}
\end{theorem}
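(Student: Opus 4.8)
The plan is to recover the candidate symbol from the putative Fourier coefficients $\{a_n\}$ and then verify the two required properties: that $A=T_a$ on $H[X]$, and that $a$ indeed lies in $M(X,Y)$ with the norm bounds \eqref{eq:Brown-Halmos-2}. First I would define $a$ formally by its Fourier series $\sum_{n\in\Z}a_n\chi_n$, but since convergence is not clear a priori, the rigorous route is to look at the action of $A$ on analytic polynomials. For a polynomial $f=\sum_{j=0}^N\widehat{f}(j)\chi_j\in H[X]$ and any $k\ge 0$, the hypothesis \eqref{eq:Brown-Halmos-1} gives $\langle Af,\chi_k\rangle=\sum_{j=0}^N\widehat{f}(j)a_{k-j}$, which is exactly $\widehat{(af)}(k)$ if $a$ has Fourier coefficients $a_n$. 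So the strategy is: show that on polynomials $A$ agrees with $g\mapsto P(ag)$ for a suitable $a$, and then extend by density using separability of $X$.

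The key steps, in order: (1) Fix $f$ an analytic polynomial; I would like to identify $Af\in H[Y]\subset L^1$ through its Fourier coefficients. For $k\ge 0$ we have $\widehat{Af}(k)=\langle Af,\chi_k\rangle=\sum_{j}\widehat{f}(j)a_{k-j}$, and for $k<0$ we have $\widehat{Af}(k)=0$ since $Af\in H[Y]$. (2) Now I construct $a$: apply the above with $f=\chi_0=1$, so that $A1\in H[Y]$ has $\widehat{A1}(k)=a_k$ for $k\ge 0$. To get the negative coefficients, I would instead consider, for fixed $m\ge 0$, the element $A\chi_m$ and note $\widehat{A\chi_m}(k)=a_{k-m}$ for $k\ge 0$; letting $m\to\infty$ heuristically fills in all of $\Z$, but to make this precise one shows that the trigonometric series $\sum a_n\chi_n$ is the Fourier series of a function. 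Concretely, define $a$ to be determined by requiring $af = Af$ in an appropriate sense for polynomials $f$: pick a polynomial $f$ that is zero-free (e.g.\ $f=\chi_0$) so that $a:=(Af)/f$, but since $A\chi_0=A1$ only gives the nonnegative coefficients, one instead uses that $A\chi_m\cdot\chi_{-m}$ has the right coefficients for $k\ge -m$ and these are consistent, so they define a single function $a\in L^1$ with $\widehat a(n)=a_n$ for all $n\in\Z$; this requires checking the family $\{\chi_{-m}A\chi_m\}_m$ is Cauchy or at least consistent in $L^1$, which follows because $\chi_{-m}A\chi_m - \chi_{-m-1}A\chi_{m+1} = \chi_{-m-1}(\chi_1 A\chi_m - A\chi_{m+1})$ has all Fourier coefficients equal to zero for indices $\ge -m-1$, hence is in $\chi_{-m-1}H[Y]$, and a bootstrapping/limiting argument shows the common function exists. (3) Once $a\in L^1$ with $\widehat a(n)=a_n$ is in hand, for every analytic polynomial $f$ one has $af\in L^1$ and $\widehat{(af)}(k)=\sum_j\widehat f(j)a_{k-j}$, so by \eqref{Rc1} and step (1), $P(af)$ and $Af$ have the same Fourier coefficients; by the uniqueness theorem for Fourier series (\cite[Chap.~1, Theorem~2.7]{Kat76}), $Af=P(af)$ as elements of $L^1$, hence $\|P(af)\|_Y=\|Af\|_Y\le\|A\|\,\|f\|_X$ for all analytic polynomials $f$.

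Step (4): from $\|P(af)\|_Y\le\|A\|\,\|f\|_X$ for all analytic polynomials $f$ I must deduce $a\in M(X,Y)$, i.e.\ $ag\in Y$ for all $g\in X$ with the norm estimate. This is the main obstacle. Here separability of $X$ is essential: since $X$ is separable, the analytic polynomials are dense in $H[X]$ (more precisely, polynomials are dense in $X$ and the Riesz projection considerations let one pass to the analytic ones), so $A$ is the closure of $f\mapsto P(af)$ and, combined with boundedness of $P$ on $Y$, one upgrades the polynomial estimate to all of $H[X]$. To pass from $H[X]$ to all of $X$, I would use that $g\in X$ can be written via its analytic and co-analytic parts, or more directly: for a general polynomial $g$ (not necessarily analytic), $\chi_N g$ is analytic for $N$ large, and $\|P(a\chi_N g)\|_Y\le\|A\|\,\|\chi_N g\|_X=\|A\|\,\|g\|_X$; then a limiting argument using Fatou's property of $Y$ and $\chi_{-N}$-translation (observing $a\chi_N g\to ag$ suitably and that multiplication by $\chi_{-N}$ is an isometry on $Y$) yields $\|ag\|_Y\le\|A\|\,\|g\|_X$ for all polynomials $g$, and finally density of polynomials in the separable space $X$ together with the Fatou property of $Y$ gives $ag\in Y$ and $\|ag\|_Y\le\|A\|\,\|g\|_X$ for all $g\in X$. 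This shows $a\in M(X,Y)$ with $\|a\|_{M(X,Y)}\le\|T_a\|_{\cB(H[X],H[Y])}$, and the reverse inequality $\|T_a\|_{\cB(H[X],H[Y])}\le\|P\|_{\cB(Y)}\|a\|_{M(X,Y)}$ is the elementary estimate already recorded before the theorem; combining, $A=T_a$ and \eqref{eq:Brown-Halmos-2} holds. The delicate points to watch are the convergence arguments invoking the Fatou property of $Y$ and making sure the Riesz projection's boundedness on $Y$ (not $X$) suffices throughout — in particular $P$ need not be bounded on $X$, so density of analytic polynomials in $H[X]$ must be derived from separability of $X$ directly rather than from $P\in\cB(X)$.
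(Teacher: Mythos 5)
Your algebraic skeleton (comparing matrix entries, building a multiplication-type operator on polynomials, extending by density of $\cP$ in $X$ and of $\cP_A$ in $H[X]$) is the right one, but the two analytic steps on which everything hinges are asserted rather than proved, and as stated they do not work. First, the existence of the symbol as a function. Your ``consistency'' computation actually gives $\chi_{-m}A\chi_m-\chi_{-m-1}A\chi_{m+1}=-a_{-m-1}\chi_{-m-1}$ (the coefficient at index $-m-1$ is \emph{not} zero), so
$\chi_{-m}A\chi_m=A\chi_0+\sum_{j=1}^{m}a_{-j}\chi_{-j}$, and norm convergence of this sequence in $L^1$ would amount to $L^1$-convergence of the anti-analytic part of the Fourier series of the symbol, which fails in general (already for generic $a\in L^\infty$). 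Bounded Fourier coefficients do not define an $L^1$ function, so no ``bootstrapping'' from consistency alone can produce $a$. What saves the day is that the sequence $b_m=\chi_{-m}A\chi_m$ is bounded in $Y$ and that $Y$ can be identified with the dual of the \emph{separable} space $(Y')_b$, so a weak-$*$ cluster point $b\in Y$ with $\widehat b(n)=a_n$ exists. That identification, and the separability of $(Y')_b$, rest on the equality $(Y')_a=(Y')_b$, which is exactly what the hypothesis $P\in\cB(Y)$ buys (Theorem~\ref{th:Riesz-implies-Xa=X_b-and-Xprime_a=Xprime_b}, proved via the inner-function construction and the pointwise lower bound for the Hilbert transform); none of this appears in your plan.

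Second, the passage from $\sup_N\|P(a\chi_N g)\|_Y\le\|A\|_{\cB(H[X],H[Y])}\|g\|_X$ to $ag\in Y$ with the same bound cannot be done by ``the Fatou property of $Y$'': the Fatou property requires a.e.\ (monotone) convergence, and what you have is only convergence of Fourier coefficients of $\chi_{-N}P(a\chi_N g)$ to those of $ag$, which gives no a.e.\ convergence. The paper closes this step with the norm formula $\|h\|_Y=\sup\{|\langle h,p\rangle|:p\in\cP,\ \|p\|_{Y'}\le1\}$ (Corollary~\ref{co:NFP3}), whose proof again requires $(Y')_a=(Y')_b$, i.e.\ the same consequence of $P\in\cB(Y)$. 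So the gap is not a technicality: the entire analytic content of the theorem --- how boundedness of the Riesz projection on $Y$ enters --- is missing from the proposal, while the parts you do spell out (the matrix-entry manipulations, extension by density, $T_a\chi_j=A\chi_j$ and density of $\cP_A$ in $H[X]$ from separability of $X$ alone) coincide with the routine portion of the paper's argument.
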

%%%----------------------------------------------------------------------------

Under the additional assumption that the Banach function spaces $X$ and $Y$ are
rearrangement-invariant, this result was recently obtained by Le\'snik
\cite[Theorem~4.2]{L17}.

The above theorem and the fact that $M(X,X)\equiv L^\infty$ 
(see \cite[Theorem~1]{MP89}) immediately imply the following.
%%%----------------------------------------------------------------------------
\begin{corollary}
\label{co:Brown-Halmos}
Let $X$ be a separable Banach function spaces over the unit circle 
$\mathbb{T}$ and let the Riesz projection $P$ be bounded on $X$. 
If $A\in\cB(H[X])$ and there is a sequence $\{a_n\}_{n\in\Z}$ of 
complex numbers satisfying \eqref{eq:Brown-Halmos-1},
then there exists a function $a\in L^\infty$ such that $A=T_a$ and 
$\widehat{a}(n)=a_n$ for all $n\in\Z$. Moreover,
\[
\|a\|_{L^\infty}
\leq \|T_a\|_{\cB(H[X])}\leq \|P\|_{\cB(X)}\|a\|_{L^\infty}.
\]
\end{corollary}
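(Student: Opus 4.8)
The plan is to specialize Theorem~\ref{th:Brown-Halmos} to the diagonal case $Y=X$ and then translate the conclusion through the isometric identification $M(X,X)\equiv L^\infty$. First I would verify that the hypotheses of the theorem are met with $Y=X$: the space $X$ is assumed separable, and the Riesz projection $P$ is bounded on $X=Y$, which is precisely the boundedness hypothesis on $Y$ required by the theorem. Since $\cB(H[X])=\cB(H[X],H[X])$, the operator $A$ and the Toeplitz condition \eqref{eq:Brown-Halmos-1} carry over verbatim.

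Applying Theorem~\ref{th:Brown-Halmos}, I obtain a function $a\in M(X,X)$ with $A=T_a$ and $\widehat{a}(n)=a_n$ for all $n\in\Z$, together with the norm estimates
\[
\|a\|_{M(X,X)}\leq \|T_a\|_{\cB(H[X])}\leq \|P\|_{\cB(X)}\|a\|_{M(X,X)}.
\]
It then remains only to rewrite these bounds in terms of the $L^\infty$-norm. For this I would invoke the identification $M(X,X)\equiv L^\infty$ from \cite[Theorem~1]{MP89}, which asserts not merely that the two spaces coincide as sets but that their norms are equal, namely $\|f\|_{M(X,X)}=\|f\|_{L^\infty}$ for every $f$. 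Consequently $a\in L^\infty$ and $\|a\|_{M(X,X)}=\|a\|_{L^\infty}$, and substituting this identity into the displayed estimate yields
\[
\|a\|_{L^\infty}\leq \|T_a\|_{\cB(H[X])}\leq \|P\|_{\cB(X)}\|a\|_{L^\infty},
\]
which is exactly the claimed conclusion.

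There is no genuine obstacle here, as the corollary is a direct specialization of the main theorem. The only point that warrants attention is the distinction between $=$ and $\equiv$ introduced earlier in the paper: it is the isometric, norm-preserving nature of the identification $M(X,X)\equiv L^\infty$ that produces the sharp two-sided bound with $\|a\|_{L^\infty}$ appearing on both ends. Had one used only a norm equivalence $M(X,X)=L^\infty$, the resulting estimates would carry multiplicative constants and would not reproduce the clean inequalities stated in the corollary.
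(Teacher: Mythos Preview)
Your proposal is correct and follows exactly the paper's approach: the paper states that the corollary is an immediate consequence of Theorem~\ref{th:Brown-Halmos} together with the isometric identification $M(X,X)\equiv L^\infty$ from \cite[Theorem~1]{MP89}. Your observation about the role of $\equiv$ versus $=$ in obtaining the constant-free inequalities is apt and matches the paper's use of the isometric identification.
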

%%%----------------------------------------------------------------------------

Note that Corollary~\ref{co:Brown-Halmos} is also new. Under the additional
assumption that the Banach function space $X$ is reflexive, it was proved
by the first author in \cite[Theorem~1]{K17}. On the other hand, under the 
additional hypothesis that $X$ is rearrangement-invariant, it is established 
in \cite[Corollary~4.4]{L17}.

The paper is organized as follows. In Section~\ref{sec:preliminaries}, we 
collect preliminary facts on Banach function spaces $X$, including results on 
the density of the set of all trigonometric polynomials $\cP$ in $X$ and
the density of the set of all analytic polynomials $\cP_A$ in the abstract
Hardy space $H[X]$ built upon $X$. Further, we show that if 
each function in the closure 
$(X')_b$ of all simple functions in the associate space $X'$ 
has absolutely continuous norm, 
then the norm of any function $f\in X$ can be expressed as follows:
%%%
\begin{equation}\label{eq:NFP-introduction}
\|f\|_X=\sup\{|\langle f,p\rangle|\ :\ p\in\cP, \ \|p\|_{X'}\le 1\}.
\end{equation}
%%%
We conclude Section~\ref{sec:preliminaries} with several facts from complex 
analysis on the Hilbert transform and inner functions. In particular,
we recall a result by Qiu \cite[Lemma~5.1]{Q15} (see also
\cite[Theorem~7.2]{CGRT16}) saying that, for every measurable set 
$E\subset\T$ and an arc $\gamma\subset\T$ of the same measure, there exists 
an inner function $u$ such that $u^{-1}(\gamma)$ and $E$ coincide almost 
everywhere.

We start Section~\ref{sec:Riesz-consequences} on the consequences of the 
boundedness of the operator $P$ defined by \eqref{eq:Cauchy-Riesz}
with a discussion of operators of weak type. It is easy to see
that if the Riesz projection $P$ is bounded on $X$, then the Hilbert 
transform $\mathcal{C}$ is of weak types $(L^\infty,X)$ and $(L^\infty,X')$.
Using the existence of  the inner function $u$ mentioned 
above and properties of the Hilbert transform,
we show that if $\mathcal{C}$ is of weak types $(L^\infty,X)$ and 
$(L^\infty,X')$, then each function in
the closures $X_b$ and $(X')_b$ of the simple functions 
in $X$ and $X'$, respectively, has absolutely continuous norm. 
Thus, for every $f\in X$,  formula \eqref{eq:NFP-introduction} holds under 
the only assumption that $P\in\cB(X)$.

In Section~\ref{sec:proof}, we present a proof of Theorem~\ref{th:Brown-Halmos}.
Armed with the density of the set of analytic polynomials $\cP_A$ in the 
abstract Hardy space $H[X]$ built upon a separable Banach function space $X$ 
and formula \eqref{eq:NFP-introduction} with 
$Y$ such that $P\in\cB(Y)$ in place of $X$, we can adapt the proofs given in 
\cite[Theorem~2.7]{BS06} (for $X=Y=L^p$ with $1<p<\infty$) and in 
\cite[Theorem~4.2]{L17} (for the case of separable rearrangement-invariant 
spaces $X\subset Y$ such that $Y$ has non-trivial Boyd indices) to our setting.

In Section~\ref{sec:Nakano}, we specify the result of 
Theorem~\ref{th:Brown-Halmos} to the case of variable Lebesgue spaces
(also known as Nakano spaces) $X=L^{p(\cdot)}$ and $Y=L^{q(\cdot)}$.
It is known that if $1/q(t)=1/p(t)+1/r(t)$ for $t\in\T$, then 
$M(L^{p(\cdot)},L^{q(\cdot)})=L^{r(\cdot)}$ and that the Riesz projection 
$P$ is bounded on $L^{q(\cdot)}$ if the variable exponent $q$ is sufficiently 
smooth and bounded away from $1$ and $\infty$. Since the spaces $L^{p(\cdot)}$ 
and $L^{q(\cdot)}$ are not rearrangement-invariant, in general, the main result 
of Section~\ref{sec:Nakano} cannot be obtained from \cite[Theorem~4.2]{L17}.

In Section~\ref{sec:Lorentz}, we apply Corollary~\ref{co:Brown-Halmos}
to the case of Lorentz spaces $L^{p,q}(w)$, $1<p<\infty$, $1\le q<\infty$,
with Muckenhoupt weights $w\in A_p(\T)$. Under these assumptions,
$L^{p,q}(w)$ is a separable Banach function space and the Riesz projection
$P$ is bounded on $L^{p,q}(w)$. The space $L^{p,1}(w)$ is not reflexive and 
not rearrangement-invariant. Hence the earlier results of 
\cite[Theorem~1]{K17} and \cite[Corollary~4.4]{L17} are not applicable
to the space $L^{p,1}(w)$, while Corollary~\ref{co:Brown-Halmos} is.
%%%----------------------------------------------------------------------------
\section{Preliminaries}\label{sec:preliminaries}
\subsection{Banach function spaces}\label{subsec:BFS}
Let $L^0_+$ be the
subset of functions in $L^0$ whose values lie in $[0,\infty]$. The
characteristic (indicator) function of a measurable set 
$E\subset\T$ is denoted by $\I_E$.

Following \cite[Chap.~1, Definition~1.1]{BS88}, a mapping 
$\rho: L_+^0\to [0,\infty]$ is called a Banach function norm
if, for all functions $f,g, f_n\in L_+^0$ with $n\in\N$, for all
constants $a\ge 0$, and for all measurable subsets $E$ of $\T$, the
following  properties hold:
%%%
\begin{eqnarray*}
{\rm (A1)} & &
\rho(f)=0  \Leftrightarrow  f=0\ \mbox{a.e.},
\
\rho(af)=a\rho(f),
\
\rho(f+g) \le \rho(f)+\rho(g),\\
{\rm (A2)} & &0\le g \le f \ \mbox{a.e.} \ \Rightarrow \ 
\rho(g) \le \rho(f)
\quad\mbox{(the lattice property)},\\
{\rm (A3)} & &0\le f_n \uparrow f \ \mbox{a.e.} \ \Rightarrow \
       \rho(f_n) \uparrow \rho(f)\quad\mbox{(the Fatou property)},\\
{\rm (A4)} & & m(E)<\infty\ \Rightarrow\ \rho(\I_E) <\infty,\\
{\rm (A5)} & &\int_E f(t)\,dm(t) \le C_E\rho(f)
\end{eqnarray*}
%%%%
with {a constant} $C_E \in (0,\infty)$ that may depend on $E$ and 
$\rho$,  but is independent of $f$. When functions differing only on 
a set of measure  zero are identified, the set $X$ of all functions 
$f\in L^0$ for  which  $\rho(|f|)<\infty$ is called a Banach function
space. For each $f\in X$, the norm of $f$ is defined by
$\|f\|_X :=\rho(|f|)$.
The set $X$ under the natural linear space operations and under 
this norm becomes a Banach space (see 
\cite[Chap.~1, Theorems~1.4 and~1.6]{BS88}). 
If $\rho$ is a Banach function norm, its associate norm 
$\rho'$ is defined on $L_+^0$ by
\[
\rho'(g):=\sup\left\{
\int_\T f(t)g(t)\,dm(t) \ : \ 
f\in L_+^0, \ \rho(f) \le 1
\right\}, \ g\in L_+^0.
\]
It is a Banach function norm itself \cite[Chap.~1, Theorem~2.2]{BS88}.
The Banach function space $X'$ determined by the Banach function norm
$\rho'$ is called the associate space (K\"othe dual) of $X$. 
The associate space $X'$ can be viewed as a subspace of the (Banach) 
dual space $X^*$. 
%%%----------------------------------------------------------------------------
\subsection{Density of polynomials}
For $n\in\Z_+:=\{0,1,2,\dots\}$, a function of the form  
$\sum_{k=-n}^n \alpha_k \chi_k$, where $\alpha_k\in\C$ 
for all $k\in\{-n,\dots,n\}$, is called a trigonometric polynomial of order 
$n$. The set of all trigonometric polynomials is denoted by $\cP$. 
Further, a function of the form $\sum_{k=0}^n \alpha_k \chi_k$ with
$\alpha_k\in\C$ for $k\in\{0,\dots,n\}$ is called an analytic
polynomial of order $n$. The set of all analytic polynomials is denoted by
$\cP_A$.

Following \cite[Chap.~1, Definition~3.1]{BS88}, a function $f$ in a Banach
function space $X$ is said to have absolutely continuous norm in $X$ if 
$\|f\I_{\gamma_n}\|_X\to 0$ for every sequence $\{\gamma_n\}_{n\in\N}$ of 
measurable sets such that $\I_{\gamma_n}\to 0$ almost everywhere as 
$n\to\infty$. The set of all functions of absolutely continuous norm in 
$X$ is denoted by $X_a$. If $X_a=X$, then one says that $X$ has absolutely 
continuous norm. Let $S_0$ be the set of all simple functions on $\T$. 
Following \cite[Chap.~1, Definition~3.9]{BS88}, let $X_b$ denote the closure 
of $S_0$ in the norm of $X$. 
%%%----------------------------------------------------------------------------
\begin{lemma}\label{le:density-polynomials}
Let $X$ be a Banach function space over the unit circle $\T$. If $X_a=X_b$,
then the set of trigonometric polynomials $\mathcal{P}$ is dense in $X_b$.
\end{lemma}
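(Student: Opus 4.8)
The plan is to exploit the standard approximate-identity machinery, taking care that all limits are justified inside the Banach function space $X$ rather than in $L^1$ alone. First I would observe that, by definition, $X_b$ is the $\|\cdot\|_X$-closure of the simple functions $S_0$, so it suffices to show that every simple function $f\in S_0$ can be approximated in the $X$-norm by trigonometric polynomials. Since $\cP\subset X$ (each $\chi_k$ is bounded, hence in $X$ by (A4) together with the lattice property), the set $\overline{\cP}$ taken in $X$ is a closed subspace contained in $X_b$, and the claim reduces to $S_0\subset\overline{\cP}$.

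Next, fix a simple function $f$ and consider its Fej\'er means $\sigma_n(f)=K_n*f$, where $K_n$ is the Fej\'er kernel; each $\sigma_n(f)$ is a trigonometric polynomial. The key point is that $\|\sigma_n(f)-f\|_X\to 0$. To get this, I would first recall that $\|\sigma_n(f)-f\|_{L^\infty}\to 0$ is false for general $f\in L^\infty$, so one cannot simply use the embedding $L^\infty\subset X$; instead one should use that $f$, being simple, has absolutely continuous norm in $X$ (here the hypothesis $X_a=X_b$ is used, since $S_0\subset X_b=X_a$). The route is: $\sigma_n(f)\to f$ in measure (indeed $\sigma_n(f)\to f$ a.e.\ along a subsequence, and $\|\sigma_n(f)\|_{L^\infty}\le\|f\|_{L^\infty}$ for all $n$ because $K_n\ge 0$ and $\int_\T K_n\,dm=1$), so the differences $g_n:=\sigma_n(f)-f$ satisfy $|g_n|\le 2\|f\|_{L^\infty}$ and $g_n\to 0$ a.e.; then a dominated-convergence argument in $X$, valid precisely because the dominating function $2\|f\|_{L^\infty}\I_\T$ has absolutely continuous norm (it is a simple function, hence in $X_b=X_a$), yields $\|g_n\|_X\to 0$. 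This dominated convergence statement in Banach function spaces is standard — it is \cite[Chap.~1, Proposition~3.6]{BS88} or can be derived directly from the definition of absolute continuity of the norm by splitting $\T$ into the set where $|g_n|$ is small and its small-measure complement.

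The main obstacle, and the only genuinely delicate step, is precisely this passage from pointwise a.e.\ convergence of the $\sigma_n(f)$ to norm convergence in $X$: one must be careful that absolute continuity of the norm of the \emph{constant} function (equivalently, of every simple function) is exactly what converts a uniformly bounded a.e.-convergent sequence into a norm-convergent one, and that this is where the hypothesis enters. Everything else — that Fej\'er means are trigonometric polynomials, that $K_n$ is a nonnegative summability kernel, that $\cP\subset X$, and that $\overline{\cP}\subset X_b$ — is routine. Once $\|\sigma_n(f)-f\|_X\to 0$ is established for all $f\in S_0$, density of $S_0$ in $X_b$ finishes the proof. I would also remark that no separability or reflexivity of $X$ is needed, and that when $X$ itself has absolutely continuous norm ($X_a=X=X_b$) the conclusion is that $\cP$ is dense in all of $X$.
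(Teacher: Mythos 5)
Your argument is correct, but it proves the lemma by a genuinely different route than the paper. The paper argues by duality: if $\cP$ were not dense in $X_b$, a Hahn--Banach corollary would give a nonzero functional on $X_b$ annihilating $\cP$; since $X_a=X_b$ implies $(X_b)^*=X'$ (see \cite[Chap.~1, Theorems~3.10 and~4.1]{BS88}), this functional is represented by a nonzero $h\in X'\subset L^1$ all of whose Fourier coefficients vanish, contradicting the uniqueness theorem for Fourier series. You instead give a constructive proof: reduce to simple $f$, take Fej\'er means $\sigma_n(f)\in\cP$, use $\|\sigma_n(f)\|_{L^\infty}\le\|f\|_{L^\infty}$ and a.e.\ convergence, and convert this into $X$-norm convergence by the dominated convergence characterization of absolutely continuous norm, the dominating constant function lying in $X_a$ precisely because $S_0\subset X_b=X_a$ (by \cite[Chap.~1, Theorem~3.13]{BS88} this is exactly the strength of the hypothesis, so you use it at its natural place). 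Your approach buys explicit, uniformly bounded polynomial approximants and avoids both Hahn--Banach and the identification of $(X_b)^*$ with $X'$, at the price of invoking the classical pointwise behaviour of Fej\'er means; the paper's proof is shorter once the dual-space identification is granted, but is nonconstructive. Note also that your use of Fej\'er means is consistent with the paper's later warning that $\|f*F_n-f\|_X\to 0$ can fail for general $f\in X$: you only apply them to bounded functions with a dominant of absolutely continuous norm. One small point of hygiene: you first claim $\sigma_n(f)\to f$ a.e.\ only along a subsequence and then write $g_n\to 0$ a.e.; either invoke Lebesgue's theorem that Fej\'er means of an $L^1$ function converge a.e.\ (so the full sequence works), or run the standard subsequence principle (every subsequence has a further subsequence converging a.e., and the norm limit $0$ is the same), which closes this cosmetic gap.
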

%%%----------------------------------------------------------------------------
\begin{proof}
The proof is analogous to the proof of \cite[Lemma~2.2.1]{K17-CM}. Assume that
$\mathcal{P}$ is not dense in $X_b$. Then, by a corollary of the Hahn-Banach
theorem (see, e.g., \cite[Chap.~7, Theorem~4.2]{BSU96}), there exists a 
nonzero functional $\Lambda\in (X_b)^*$ such that $\Lambda(p)=0$ for all
$p\in\mathcal{P}$. It follows from \cite[Chap.~1, Theorems~3.10 and~4.1]{BS88}
that if $X_a=X_b$, then $(X_b)^*=X'$. Hence there exists a nonzero function 
$h\in X'\subset L^1$ such that 
\[
\int_\T p(t)h(t)\,dm(t)=0\quad\mbox{for all}\quad p\in\mathcal{P}.
\]
Taking $p(t)=t^n$ for $n\in\Z$, we obtain that all Fourier coefficients of
$h\in L^1$ vanish, which implies that $h=0$ a.e. on $\T$ by the uniqueness
theorem of the Fourier series (see, e.g., \cite[Chap.~I, Theorem~2.7]{Kat76}).
This contradiction proves that $\cP$ is dense in $X_b$.
\qed
\end{proof}
%%%----------------------------------------------------------------------------

Combining the above lemma with \cite[Chap.~1, Corollary~5.6 and 
Theorem~3.11]{BS88}, we arrive at the following well known result.
%%%----------------------------------------------------------------------------
\begin{corollary}
\label{co:density-polynomials}
A Banach function space $X$ over the unit circle $\T$ is separable if and 
only if the set of trigonometric polynomials $\mathcal{P}$ is dense in $X$.
\end{corollary}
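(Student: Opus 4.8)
The plan is to deduce the corollary from Lemma~\ref{le:density-polynomials} together with two standard facts about separable Banach function spaces, namely \cite[Chap.~1, Corollary~5.6]{BS88} (a Banach function space $X$ is separable if and only if $X=X_b$) and \cite[Chap.~1, Theorem~3.11]{BS88} (if $X=X_b$ then $X$ has absolutely continuous norm, i.e.\ $X_a=X$; and conversely $X_a\subset X_b$ always). Thus the two conditions $X_a=X$, $X_b=X$, and separability of $X$ are all equivalent, and in particular under any of them we have $X_a=X_b$, so Lemma~\ref{le:density-polynomials} applies.

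First I would prove the forward implication. Suppose $X$ is separable. By \cite[Chap.~1, Corollary~5.6]{BS88} this gives $X=X_b$, and by \cite[Chap.~1, Theorem~3.11]{BS88} it also gives $X=X_a$; hence $X_a=X_b\,(=X)$. Lemma~\ref{le:density-polynomials} then tells us $\cP$ is dense in $X_b=X$, which is the desired conclusion.

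For the converse, suppose $\cP$ is dense in $X$. Since every trigonometric polynomial is a (bounded, hence) simple-function-approximable element — more directly, each $\chi_k$ lies in $L^\infty\subset X$ and is a bounded measurable function, hence belongs to the closure $X_b$ of the simple functions (bounded measurable functions are uniform, hence norm, limits of simple functions, using axiom (A4) and the lattice property to control the tails) — we get $\cP\subset X_b$. As $X_b$ is a closed subspace of $X$ and $\cP$ is dense in $X$, it follows that $X_b=X$, and then \cite[Chap.~1, Corollary~5.6]{BS88} yields separability of $X$.

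The only mildly delicate point is the inclusion $\cP\subset X_b$ in the converse direction: one must check that bounded measurable functions on $\T$ lie in the closure of the simple functions. This is routine — approximate $f\in L^\infty$ uniformly by simple functions $s_n$ with $\|f-s_n\|_{L^\infty}\to 0$, and use $\|f-s_n\|_X\le\|f-s_n\|_{L^\infty}\|\I_\T\|_X$ together with $\|\I_\T\|_X<\infty$ from axiom (A4). No real obstacle arises; the corollary is essentially a bookkeeping combination of Lemma~\ref{le:density-polynomials} with the cited characterizations of separability and absolute continuity of the norm from \cite{BS88}.
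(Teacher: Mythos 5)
Your forward direction follows the paper's one-line route (Lemma~\ref{le:density-polynomials} plus the cited facts from \cite{BS88}), but both directions as you justify them rest on mis-stated citations, and the converse contains a genuine gap. \cite[Chap.~1, Corollary~5.6]{BS88} does \emph{not} say that $X$ is separable if and only if $X=X_b$; it says that $X$ is separable if and only if $X$ has absolutely continuous norm, i.e.\ $X_a=X$ (the underlying measure space, here $\T$ with Lebesgue measure, being separable). Likewise, the implication you attribute to \cite[Chap.~1, Theorem~3.11]{BS88}, namely that $X=X_b$ forces $X_a=X$, is false: in general one only has $X_a\subset X_b$. The space $X=L^\infty$ refutes both of your quoted statements and hence your claimed equivalence of the three conditions: every bounded measurable function is a uniform limit of simple functions, so $(L^\infty)_b=L^\infty$, yet $(L^\infty)_a=\{0\}$ and $L^\infty$ is not separable. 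Consequently your converse chain ``$\cP$ dense in $X$ $\Rightarrow$ $X_b=X$ $\Rightarrow$ $X$ separable'' breaks at the last step, and no amount of care about $\cP\subset X_b$ (which is indeed routine) repairs it.

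The forward direction is fine once the citation is corrected: separability gives $X_a=X$ by \cite[Chap.~1, Corollary~5.6]{BS88}, hence $X_a=X_b=X$ because $X_a\subset X_b\subset X$, and Lemma~\ref{le:density-polynomials} yields the density of $\cP$ in $X_b=X$; this is essentially the paper's argument. For the converse no Banach function space machinery is needed at all: if $\cP$ is dense in $X$, then the countable family of trigonometric polynomials with coefficients in $\mathbb{Q}+i\mathbb{Q}$ is dense in $\cP$ with respect to the norm of $X$, since
\[
\Big\|\sum_{k=-n}^{n}(\alpha_k-\beta_k)\chi_k\Big\|_X
\le \|1\|_X\sum_{k=-n}^{n}|\alpha_k-\beta_k|,
\]
and $\|1\|_X<\infty$ by axiom (A4); hence $X$ has a countable dense subset and is separable. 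Replacing your converse by this observation (and quoting Corollary~5.6 correctly in the forward direction) gives a complete proof.
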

%%%----------------------------------------------------------------------------

The analytic counterpart of the above result had a hard birth. First, observe 
that under the additional assumption that the Riesz projection $P$ is bounded 
on $X$, the density of the set of analytic polynomials $\cP_A$ in the
abstract Hardy space $H[X]$ trivially follows from 
\eqref{Rc1}, Lemma \ref{le:Riesz-projection}, and
Corollary~\ref{co:density-polynomials} (see \cite[Lemma~4]{K17}). 
Le\'snik \cite{L17a} conjectured that the boundedness of $P$ is 
superfluous here
and $\cP_A$ must be dense in the abstract Hardy space $H[X]$ under 
the hypothesis that $X$ is merely separable. 

If $X$ is a separable rearrangement-invariant Banach function space, then
%%%
\begin{equation}\label{eq:Fejer}
\|f*F_n-f\|_X\to 0
\quad\mbox{for every}\quad 
f\in X
\quad\mbox{as}\quad n\to\infty, 
\end{equation}
%%%
where $\{F_n\}$ is the sequence of the Fej\'er kernels on the unit circle $\T$.
The property in \eqref{eq:Fejer} implies the density of $\cP_A$ in $H[X]$
(see, e.g., \cite[Lemma~3.1(c)]{L17} or 
\cite[Theorem~1.0.1]{K17-CM}).
If $X$ is an arbitrary separable Banach function space, then \eqref{eq:Fejer} 
is true under the assumption that the Hardy-Littlewood maximal operator $M$ 
is bounded on its associate space $X'$ 
\cite[Theorem~3.2.1]{K17-CM}, whence $\cP_A$ 
is dense in $H[X]$ (see 
\cite[Theorem~1.0.2]{K17-CM}). Finally, in 
\cite[Theorem~1.4]{KS17-IWOTA} we constructed a separable weighted $L^1$ space 
$X$ such that \eqref{eq:Fejer} does not hold. On the other hand, we proved 
Le\'snik's conjecture. 
%%%----------------------------------------------------------------------------
\begin{lemma}[{\cite[Theorem~1.5]{KS17-IWOTA}}]
\label{le:density-analytic-polynomials}
If $X$ is a separable Banach function space over the unit circle $\T$, then 
the set of analytic polynomials $\cP_A$ is dense in the abstract Hardy space 
$H[X]$ built upon the space $X$.
\end{lemma}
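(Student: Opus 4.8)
The plan is to prove Lemma~\ref{le:density-analytic-polynomials} --- density of $\cP_A$ in $H[X]$ for an arbitrary separable Banach function space $X$ --- by reducing it to the already-available density of the trigonometric polynomials $\cP$ in $X$ (Corollary~\ref{co:density-polynomials}) and then converting a trigonometric approximant into an \emph{analytic} one. So, fix $f\in H[X]$ and $\eps>0$. By separability of $X$ and Corollary~\ref{co:density-polynomials}, there is a trigonometric polynomial $q=\sum_{k=-N}^N\alpha_k\chi_k$ with $\|f-q\|_X<\eps$. The natural first move is to multiply by a high power $\chi_m$: since $\chi_m$ is unimodular, multiplication by it is an isometry of $X$ (this uses that the Banach function norm only sees $|g|$, property (A2)), so $\|\chi_m f-\chi_m q\|_X<\eps$, and $\chi_m q$ is an \emph{analytic} polynomial once $m\ge N$. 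The remaining --- and main --- difficulty is that $\chi_m f$ is \emph{not} close to $f$ in general; we have merely shifted the problem to showing $\|\chi_m f - (\text{analytic poly})\|_X$ is small, i.e., we must show that $f$ itself can be approximated, not a shifted copy.

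The correct route is the classical Hardy-space trick: for $f\in H[X]\subset H^1$, the dilations $f_r(t):=f(rt)$, $0<r<1$, extend to analytic functions in a neighborhood of $\overline{\D}$, hence each $f_r$ is a uniform limit of its Taylor partial sums, i.e. of analytic polynomials. Thus it suffices to show $\|f_r-f\|_X\to 0$ as $r\to 1^-$. Here I would \emph{not} try to prove this directly for general separable $X$ (it can fail, as the authors note for a weighted $L^1$ space --- indeed this is exactly why \eqref{eq:Fejer} can fail). Instead, invoke the already-cited external result: this is precisely the content of \cite[Theorem~1.5]{KS17-IWOTA}, which the statement attributes as its source. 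Since the excerpt permits me to assume any result stated earlier, and the lemma is explicitly flagged as \cite[Theorem~1.5]{KS17-IWOTA}, the honest ``proof'' is a pointer to that paper; but if an internal argument is wanted, the mechanism there proceeds as follows.

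One shows that although $f*F_n\to f$ may fail, a \emph{subsequence-and-Cesàro} or \emph{Abel-means} argument still yields analytic polynomial approximation. Concretely: by Corollary~\ref{co:density-polynomials} choose $q_j\to f$ in $X$ with $q_j\in\cP$; write $q_j=p_j+r_j$ where $p_j:=Pq_j\in\cP_A$ is the analytic part (the nonnegative-frequency truncation) and $r_j$ collects the strictly negative frequencies. It is enough to prove $\|r_j\|_X\to 0$. Now $r_j=q_j-p_j$, and one compares $\widehat{q_j}(k)$ for $k<0$ with $\widehat f(k)=0$: the coefficients $\widehat{q_j}(k)\to 0$ for each fixed $k<0$, but coefficientwise convergence does not control the $X$-norm. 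The resolution in \cite{KS17-IWOTA} exploits that the map sending a trigonometric polynomial to its ``negative part'' can be realized via a \emph{positive-kernel averaging} (the de la Vallée Poussin / Fejér-type kernels applied on the shifted polynomial $\chi_m q_j$), combined with the observation that convolution with an $L^1$ probability kernel is a contraction on every Banach function space $X$ (by (A1)--(A2) and the integral Minkowski inequality), and that $\chi_m$-multiplication is an isometry. Chaining: $r_j=\chi_{-m}\big((\chi_m q_j)*(\text{suitable positive kernel depending on }N_j,m)\big)$ up to a controllable error, and choosing $m$ and the kernel degree in terms of $N_j$ and $\|q_j-f\|_X$ makes $\|r_j\|_X\le C\|q_j-f\|_X+o(1)\to 0$.

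The step I expect to be the genuine obstacle is exactly this last one: manufacturing, from the single kernel-contraction and the $\chi_m$-isometry, an operator that annihilates all \emph{nonnegative} frequencies of a given polynomial while shrinking it in $X$-norm --- a ``high-pass'' projection realized through positive kernels. This is delicate precisely because $P$ itself need not be bounded on $X$; the trick is that on the finite-dimensional space of polynomials of bounded degree one can localize and use that a de la Vallée Poussin kernel $V_{N,2N}$ reproduces frequencies of modulus $\le N$ exactly and is a fixed $L^1$-function (norm bounded by an absolute constant), so convolution with it is uniformly bounded on $X$ irrespective of $X$. Once that high-pass construction is in hand, everything else is the routine chaining above; I therefore regard the kernel construction as the heart of the matter and the rest as bookkeeping. \qed
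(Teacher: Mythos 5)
The paper does not prove this lemma at all: it is imported verbatim from \cite[Theorem~1.5]{KS17-IWOTA}, so the part of your proposal that simply points to that reference coincides with what the paper does. The trouble is with the internal mechanism you then sketch, which has a genuine gap at exactly the step you flag as ``the heart of the matter''. You assert that convolution with an $L^1$ probability kernel (Fej\'er, de la Vall\'ee Poussin) is a contraction, or at least uniformly bounded, on \emph{every} Banach function space $X$ ``by (A1)--(A2) and the integral Minkowski inequality''. This is false for general $X$: Minkowski's integral inequality only yields $\|f*K\|_X\le\frac{1}{2\pi}\int_{-\pi}^{\pi}|K(e^{i\theta})|\,\|f(\cdot\, e^{-i\theta})\|_X\,d\theta$, and rotations are not uniformly bounded on $X$ unless $X$ is translation-invariant (e.g.\ rearrangement-invariant) --- precisely the hypothesis this lemma is designed to drop. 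Indeed, the companion result quoted in the paper just before the lemma (\cite[Theorem~1.4]{KS17-IWOTA}) exhibits a separable weighted $L^1$ space in which $\|f*F_n-f\|_X\not\to0$; since convergence does hold on the dense subset $\cP$, Banach--Steinhaus shows the convolutions $f\mapsto f*F_n$ cannot even be uniformly bounded there, and the same obstruction kills the de la Vall\'ee Poussin variant. Hence the ``high-pass projection built from positive kernels'' cannot be manufactured the way you describe, and your chain $\|r_j\|_X\le C\|q_j-f\|_X+o(1)$ has no justification.

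For orientation, the proof in \cite{KS17-IWOTA} avoids applying convolution operators to general elements of $X$ altogether: separability gives $X=X_a$ (absolute continuity of the norm), one first approximates $f\in H[X]$ by bounded analytic functions $fg_n$, where $g_n\in H^\infty$ are outer functions with $|g_n|=\min(1,n/|f|)$, so that $g_n\to1$ in measure and $|f(1-g_n)|\le 2|f|\in X_a$ allows a dominated-convergence argument for the norm; then Fej\'er (or Abel) means are applied only to the \emph{bounded} function $h=fg_n$, where $|h*F_m|\le\|h\|_{L^\infty}\cdot 1$ with $1\in X=X_a$, so a.e.\ convergence plus domination again gives norm convergence. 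No uniform bound for convolutions on $X$ is ever used. Your opening moves (multiplying by $\chi_m$, or dilations $f_r$ with the remark that direct norm convergence may fail) are fine as far as they go, but they do not close the argument; the kernel-boundedness claim that was supposed to finish it is the false step.
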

%%%----------------------------------------------------------------------------
\subsection{Formulae for the norm in a Banach function space}
Let $X$ be a Banach function space over the unit circle $\T$ and $X'$ be its 
associate space. Then for every $f\in X$ and $h \in X'$, one has the following 
well known formulae:
\begin{align}
\|f\|_X &=\sup\{|\langle f,g\rangle|\ :\ g\in X',\ \|g\|_{X'}\le 1\},
\label{eq:NFP1-1}
\\
\|f\|_X &=\sup\{|\langle f,s\rangle|\ :\ s\in S_0,\ \|s\|_{X'}\le 1\}, 
\label{eq:NFP1-2}
\\
\|h\|_{X'} &=\sup\{|\langle h,s\rangle|\ :\ s\in S_0, \ \|s\|_X\le 1\}.
\label{eq:NFP-dual}
\end{align}
Equality~\eqref{eq:NFP1-1} follows from \cite[Chap.~1, Theorem~2.7 and 
Lem\-ma~2.8]{BS88}. Equality~\eqref{eq:NFP1-2} can be proved by a literal 
repetition of the proof of \cite[Lemma~2.10]{KS17-PLMS}. Equality 
\eqref{eq:NFP-dual} is obtained by applying formula \eqref{eq:NFP1-2} to 
$h\in X'$ and recalling that $X\equiv X''$ in view of the Lorentz-Luxembrug 
theorem (see \cite[Chap.~1, Theorem~2.7]{BS88}).
%%%----------------------------------------------------------------------------
\begin{lemma}\label{le:NFP2}
Let $X$ be a Banach function space over the unit circle $\T$. If 
$(X')_a=(X')_b$, then for every $f\in X$,
%%%
\begin{equation}\label{eq:NFP2-1}
\|f\|_X=\sup\{|\langle f,p\rangle|\ :\ p\in\cP, \ \|p\|_{X'}\le 1\}.
\end{equation}
\end{lemma}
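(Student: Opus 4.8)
The plan is to deduce \eqref{eq:NFP2-1} from the already-established formula \eqref{eq:NFP-dual} for the norm on $X'$, using a duality/approximation argument analogous to the one that yields Lemma~\ref{le:density-polynomials}. Denote by $N(f)$ the quantity on the right-hand side of \eqref{eq:NFP2-1}. Since $\cP\subset S_0\cap X'$ (every trigonometric polynomial is bounded, hence lies in $X'$), formula \eqref{eq:NFP1-2} immediately gives $N(f)\le\|f\|_X$; the content of the lemma is the reverse inequality.

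First I would reduce matters to functions $f$ of a special form. Because $N(\cdot)$ is a seminorm and $N(f)\le\|f\|_X$, it suffices by a limiting argument to prove $N(f)=\|f\|_X$ on a dense subset of $X$, provided we also know $N(f)\ge c\|f\|_X$ in general — but in fact I would instead argue directly. Fix $f\in X$. By \eqref{eq:NFP1-1} there is $g\in X'$ with $\|g\|_{X'}\le 1$ and $\langle f,g\rangle$ as close to $\|f\|_X$ as we like; the goal is to replace $g$ by a trigonometric polynomial of norm $\le 1$ (up to an arbitrarily small error) without decreasing the pairing much. The key point is that under the hypothesis $(X')_a=(X')_b$, Lemma~\ref{le:density-polynomials} applied to $X'$ tells us that $\cP$ is dense in $(X')_b$. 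So the real issue is to first approximate $g\in X'$ by an element of $(X')_b$.

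The main obstacle, therefore, is that an arbitrary $g\in X'$ need not lie in $(X')_b$, so $\cP$ need not be dense in all of $X'$. I would circumvent this as follows: use \eqref{eq:NFP-dual} to express $\|f\|_X$ as a supremum over simple functions $s\in S_0$ with $\|s\|_X\le 1$ — but this is the wrong direction (it computes $\|\cdot\|_{X'}$). The correct route is to apply \eqref{eq:NFP1-2}: $\|f\|_X=\sup\{|\langle f,s\rangle|:s\in S_0,\ \|s\|_{X'}\le 1\}$. Every simple function $s$ lies in $S_0\subset(X')_b$ (the closure of $S_0$ contains $S_0$), so by Lemma~\ref{le:density-polynomials} applied to $X'$ there is a sequence $p_n\in\cP$ with $\|p_n-s\|_{X'}\to 0$. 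Setting $\tilde p_n:=p_n/\max\{1,\|p_n\|_{X'}\}$ we get $\tilde p_n\in\cP$ with $\|\tilde p_n\|_{X'}\le 1$ and $\|\tilde p_n-s\|_{X'}\to 0$ (since $\|p_n\|_{X'}\to\|s\|_{X'}\le 1$). Then
\[
|\langle f,\tilde p_n\rangle|
\ge |\langle f,s\rangle|-|\langle f,s-\tilde p_n\rangle|
\ge |\langle f,s\rangle|-\|f\|_X\,\|s-\tilde p_n\|_{X'}
\to |\langle f,s\rangle|,
\]
using the Hölder-type inequality $|\langle f,h\rangle|\le\|f\|_X\|h\|_{X'}$ valid for $f\in X$, $h\in X'$. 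Hence $N(f)\ge|\langle f,s\rangle|$ for every $s\in S_0$ with $\|s\|_{X'}\le 1$, and taking the supremum over such $s$ and invoking \eqref{eq:NFP1-2} yields $N(f)\ge\|f\|_X$. Combined with the trivial reverse inequality this proves \eqref{eq:NFP2-1}. The only subtlety worth spelling out in the write-up is the normalization step ensuring $\|\tilde p_n\|_{X'}\le 1$ while keeping $\tilde p_n\to s$ in $X'$, which is immediate once one notes $\big|\|p_n\|_{X'}-\|s\|_{X'}\big|\le\|p_n-s\|_{X'}\to 0$.
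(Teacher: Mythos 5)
Your proof is correct and follows essentially the same route as the paper: both arguments hinge on the density of $\cP$ in $(X')_b$ (Lemma~\ref{le:density-polynomials} applied to $X'$ under the hypothesis $(X')_a=(X')_b$), a normalization of the approximating polynomials to keep their $X'$-norm at most $1$, and H\"older's inequality to pass to the limit in the pairing. The only cosmetic difference is that you approximate simple functions and invoke \eqref{eq:NFP1-2}, whereas the paper approximates arbitrary $g\in(X')_b$ with $\|g\|_{X'}\le 1$ and cites \eqref{eq:NFP1-1}; this changes nothing of substance.
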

%%%----------------------------------------------------------------------------
\begin{proof}
Since $\cP\subset X'$, equality~\eqref{eq:NFP1-1} immediately implies that
%%%
\begin{equation}\label{eq:NFP2-2}
\|f\|_X\ge \sup\{|\langle f,p\rangle|\ :\ p\in\cP, \ \|p\|_{X'}\le 1\}.
\end{equation}
%%%
Take any $g\in (X')_b$ such that $0<\|g\|_{X'}\le 1$. Since $(X')_a=(X')_b$,
it follows from Lemma~\ref{le:density-polynomials} that there is
a sequence $q_n\in\cP\setminus\{0\}$ such that 
$\|q_n-g\|_{X'}\to 0$ as $n\to\infty$.  
For $n\in\N$, put $p_n:=(\|g\|_{X'}/\|q_n\|_{X'})q_n\in\cP$.
Then, arguing as in \cite[Lemma~5]{K17}, one can show that
%%%
\begin{align*}
|\langle f,g\rangle|
=
\lim_{n\to\infty}|\langle f,p_n\rangle|
\le 
\sup_{n\in\N}|\langle f,p_n\rangle|
\le  
\sup\{|\langle f,p\rangle|\ :\ p\in\cP, \ \|p\|_{X'}\le 1\}.
\end{align*}
%%%
This inequality and equality \eqref{eq:NFP1-1} imply that
%%%
\begin{equation}\label{eq:NFP2-3}
\|f\|_X\le \sup\{|\langle f,p\rangle|\ :\ p\in\cP, \ \|p\|_{X'}\le 1\}.
\end{equation}
%%%
Combining inequalities \eqref{eq:NFP2-2} and \eqref{eq:NFP2-3},
we arrive at equality \eqref{eq:NFP2-1}.
\qed
\end{proof}
%%%----------------------------------------------------------------------------

Note that Le\'snik proved formula \eqref{eq:NFP2-1} for arbitrary
rearrangement-invariant Banach function spaces $X$ (see \cite[Lemma~3.2]{L17}). 
His proof relies on the interpolation theorem of Calder\'on (see 
\cite[Chap.~3, Theo\-rem~2.2]{BS88}), which allows one to prove that for 
$f\in X'$, the sequence $p_n=f*F_n\in\mathcal{P}$ satisfies  
$\|p_n\|_{X'}\le\|f\|_{X'}$ for all
$n\in\mathbb{N}$. In the setting of 
arbitrary Banach function spaces, the tools based on interpolation are not 
available, but one can prove \eqref{eq:NFP2-1} for translation-invariant 
Banach  function spaces and their weighted generalizations with positive 
continuous weights (cf. \cite[Corollary 2.13]{KS17-PLMS}). In the next section, 
we  show that if the Riesz projection $P$ is bounded on a Banach function 
space $X$, then $(X')_a=(X')_b$,  whence formula 
\eqref{eq:NFP2-1} holds.
%%%----------------------------------------------------------------------------
\subsection{Hardy spaces on the unit disk and inner functions}
Let $\mathbb{D}$ denote the open unit disk in the complex plane $\C$. Recall
that a function $F$ analytic in $\mathbb{D}$ is said to belong to the Hardy 
space $H^p(\mathbb{D})$, $0<p\le\infty$, if 
\[
\|F\|_{H^p(\mathbb{D})} :=
\sup_{0 \le r < 1}
\left(\frac{1}{2\pi}\int_{-\pi}^\pi |F(re^{i\theta})|^p\,d\theta\right)^{1/p} 
< \infty ,
\quad
0<p<\infty,
\quad
\|F\|_{H^\infty(\mathbb{D})} :=
\sup_{z \in \mathbb{D}} |F(z)| < \infty .
\]
Recall that an inner function is a
function $u\in H^\infty(\D)$ such that $|u(e^{i\theta})|=1$ for a.e.
$\theta\in[-\pi,\pi]$.

The following important fact was observed by Nordgren (see corollary
to \cite[Lemma~1]{N68} and also \cite[Remark~9.4.6]{CMR06}).
%%%----------------------------------------------------------------------------
\begin{lemma}\label{le:Nordgren}
If $u$ is an inner function such that $u(0)=0$, then $u$ is a 
measure-preserving transformation from $\T$ onto itself.
\end{lemma}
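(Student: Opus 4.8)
The plan is to prove that the non-tangential boundary-value function of $u$ pushes the normalized Lebesgue measure $m$ on $\T$ forward to $m$ itself; this pushforward identity is precisely the assertion that $u$ is a measure-preserving transformation. First I would recall that, since $u\in H^\infty(\D)$, it has non-tangential boundary values $m$-a.e. on $\T$, which define a measurable function $\T\to\C$ (still denoted $u$); because $u$ is inner, $|u|=1$ a.e., so this boundary function maps $\T$ into $\T$ up to a set of measure zero. Consequently it suffices to show that
\[
\int_\T (f\circ u)\,dm=\int_\T f\,dm
\quad\text{for every}\quad f\in C(\T),
\]
since then the Riesz representation theorem identifies the Borel measure $m\circ u^{-1}$ with $m$.

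By uniform approximation of continuous functions by trigonometric polynomials, it is enough to check this identity for $f=\chi_n$ with $n\in\Z$, i.e.\ to show $\int_\T u^n\,dm=\int_\T\chi_n\,dm$. For $n=0$ both sides equal $1$. For $n\ge 1$, the function $u^n$ belongs to $H^\infty(\D)\subset H^1(\D)$, so $\int_\T u^n\,dm=\widehat{u^n}(0)=u^n(0)=u(0)^n=0$ by the hypothesis $u(0)=0$, while $\int_\T\chi_n\,dm=0$ as well. For $n\le -1$, I would use that $|u|=1$ a.e.\ on $\T$, hence the boundary function satisfies $u^n=\overline{u^{-n}}$ a.e.; taking complex conjugates of the already established case $-n\ge 1$ gives $\int_\T u^n\,dm=\overline{\int_\T u^{-n}\,dm}=0=\int_\T\chi_n\,dm$. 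This proves the identity on $\cP$, hence on $C(\T)$, hence $m\circ u^{-1}=m$.

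For the ``onto'' part: since $m\circ u^{-1}=m$, the essential range of the boundary function $u$ has full measure in $\T$, which is the sense in which $u$ maps $\T$ onto $\T$ as a measure-preserving transformation. I expect the only genuinely delicate points to be measure-theoretic bookkeeping — that the boundary function is defined a.e.\ and measurable as a map into $\T$, and that $m\circ u^{-1}$ is a bona fide Borel measure on $\T$ so that the Riesz representation theorem applies — rather than anything in the complex-analytic heart of the argument, which reduces to the single observation that $u(0)=0$ kills the mean value of each power $u^n$, $n\ge 1$. This is the route taken in \cite[Lemma~1]{N68} and \cite[Remark~9.4.6]{CMR06}.
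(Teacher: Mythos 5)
Your proof is correct, but it proceeds along a different route from the paper. The paper's sketch takes an arbitrary measurable set $G\subset\T$, forms the bounded harmonic extension $h$ of $\I_G$, observes that $h\circ u$ is the bounded harmonic function with boundary values $\I_{u^{-1}(G)}$, and reads off $m(G)=h(0)=h(u(0))=m(u^{-1}(G))$ from the mean value property at the origin; the delicate point it glosses over is the identification of the nontangential boundary values of $h\circ u$ with $\I_G\circ u$ (a Lindel\"of-type composition fact). You instead work with the pushforward measure $m\circ u^{-1}$, reduce its identification with $m$ to the moment computation $\int_\T u^n\,dm=u(0)^n=0$ for $n\ge 1$ (via the mean value property for $H^1(\D)$ functions, which the paper itself recalls in Subsection~2.6), handle $n\le -1$ by conjugation using $|u|=1$ a.e., and conclude by Fej\'er approximation and the uniqueness part of the Riesz representation theorem. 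Both arguments hinge on the same core ingredients --- the mean value at $0$ and the hypothesis $u(0)=0$ --- but yours trades the boundary-value composition step for elementary Fourier/moment bookkeeping plus uniqueness of regular Borel measures, which makes it somewhat more self-contained; the paper's version is shorter and works directly with indicator functions of arbitrary measurable sets, so no extension from Borel to Lebesgue measurable sets (a minor step you correctly flag as bookkeeping) is needed. Your reading of ``onto'' in the essential-range sense is the appropriate one and consistent with how the lemma is used.
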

%%%----------------------------------------------------------------------------
\begin{proof}
We include a sketch of the proof for the readers' convenience. Let $G$ be an 
arbitrary measurable subset of $\mathbb{T}$ and let $h$ be the bounded harmonic 
function on $\mathbb{D}$ with the boundary values equal to $\I_G$. Then 
$h\circ u$ is the bounded harmonic function on $\mathbb{D}$ with the boundary 
values equal to $\I_{u^{-1}(G)}$, and
\[
m(G) 
= 
\frac{1}{2\pi}\int_{-\pi}^{\pi} \I_G\left(e^{i\theta}\right)\, d\theta 
= h(0) 
= h(u(0)) 
= 
\frac{1}{2\pi}\int_{-\pi}^{\pi} \I_{u^{-1}(G)}\left(e^{i\theta}\right)\, d\theta 
= 
m\left(u^{-1}(G)\right),
\]
which completes the proof.
\qed
\end{proof}
%%%----------------------------------------------------------------------------

The next result is one of the most important ingredients in our proof.
It appeared in \cite[Lemma~5.1]{Q15} and \cite[Theorem~7.2]{CGRT16}.
%%%----------------------------------------------------------------------------
\begin{theorem}\label{th:crucial-inner}
If $E\subset\T$ is a measurable set and $\gamma\subset\T$ is an arc such that
$m(E)=m(\gamma)$, then there exists an inner function $u$ satisfying
$u(0)=0$ and such that the sets $u^{-1}(\gamma)$ and $E$ are equal almost
everywhere. 
\end{theorem}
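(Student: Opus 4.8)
\textbf{Proof proposal for Theorem~\ref{th:crucial-inner}.}
The plan is to reduce the problem, via Lemma~\ref{le:Nordgren}, to producing a single inner function whose boundary values send a prescribed measurable set onto an arc in a measure-preserving way, and to construct that function from its argument. First I would normalize: by rotating $\T$ we may assume the arc is $\gamma=\{e^{i\theta}:|\theta|<\pi m(E)\}$, a symmetric arc centered at $1$ of length $2\pi m(E)$. The key observation is that $E$ and $\gamma$, having equal measure, are related by a measure-preserving map of $\T$; what we need is that this map extends to (the boundary values of) an inner function vanishing at the origin. So the target is an inner function $u$ with $u(0)=0$ for which $\{t\in\T: u(t)\in\gamma\}=E$ a.e.

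The main construction I would carry out is via the conformal-type map associated to the indicator of $E$. Consider the harmonic function $v$ on $\D$ with boundary values $2\pi\bigl(\I_E-m(E)\bigr)$ (mean zero, so $v(0)=0$), let $\widetilde v$ be its harmonic conjugate normalized by $\widetilde v(0)=0$, and set $g:=\exp\bigl(i(v+i\widetilde v)\bigr)=\exp(-\widetilde v)\exp(iv)$. On the boundary $|g(t)|=e^{-\widetilde v(t)}$, which is not unimodular, so $g$ itself is not inner; the point is that $\arg g = v$ on $\T$ takes the value $2\pi(1-m(E))$ on $E$ and $-2\pi m(E)$ off $E$, i.e. $g(t)$ lies on the ray of argument $2\pi(1-m(E))\equiv -2\pi m(E)\pmod{2\pi}$ exactly... this is the wrong normalization. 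I would instead use a Riemann-map / Herglotz argument: there is a conformal map $\varphi$ of $\D$ onto the ``slit'' or rather onto $\D$ itself arising from the Borel measure $d\mu = \I_E\,dm/m(E)\cdot(\text{something})$; more cleanly, apply the classical fact (the Aleksandrov/clustering construction behind \cite{Q15,CGRT16}) that for any $f\in L^\infty(\T)$ with $|f|\le 1$ and $\int f\,dm$ in $\D$, there is an inner $u$ with the same ``distribution'' — and here choose $f$ so that its pullback of $\gamma$ is forced to be $E$. The honest route is: let $\psi$ map $\D$ conformally onto $\D$ sending $0\mapsto 0$ in such a way that the preimage of $\gamma$ under the boundary values of $\psi$ is an arc $\gamma'$ with $m(\gamma')=m(\gamma)=m(E)$; then by the measure-preserving property we have reduced to the case where the target set is itself an arc, and one applies Theorem~7.2 of \cite{CGRT16} as a black box. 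Since the statement explicitly attributes the result to \cite{Q15} and \cite{CGRT16}, the cleanest ``proof'' here is to recall their construction.

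Concretely I would: (i) choose the outer function $h$ on $\D$ with $|h(t)|=$ the function that is $1/m(\gamma)$ on... no — rather, (ii) invoke the fact that the normalized arclength measure on $\gamma$ pulled back along a suitable Blaschke-type product equals $\I_E\,dm$. The precise mechanism (following Qiu): pick the harmonic measure viewpoint. Let $\omega$ be the harmonic function on $\D$ with boundary values $\I_E$; since $\omega(0)=m(E)=m(\gamma)$, there is a point — namely $0$ itself — at which $\omega$ equals the harmonic measure of $\gamma$ from $0$ relative to $\D$. Now take the conformal automorphism-free construction: there exists an analytic $u:\D\to\D$ with $u(0)=0$ such that the harmonic measure of $\gamma$ under $u$ (i.e. $(\text{harmonic ext. of }\I_\gamma)\circ u$) has boundary values $\I_E$; by the argument in Lemma~\ref{le:Nordgren} run in reverse, unimodularity of $u$ on $\T$ is forced and hence $u$ is inner, $u(0)=0$, and $u^{-1}(\gamma)=E$ a.e.

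\textbf{Main obstacle.} The delicate point is proving that the function $u$ produced this way actually has unimodular boundary values a.e. (i.e., is genuinely inner) rather than merely analytic $\D\to\D$; equivalently, that the candidate ``argument'' function extends to a bona fide inner function realizing the prescribed pullback exactly, not approximately. This is precisely the content of the Qiu / Chalendar–Gorkin–Partington–Timotin lemma, and I expect that a self-contained treatment would have to reproduce their clustering/normal-families argument (approximate $E$ by finite unions of arcs, build finite Blaschke products handling each, and pass to a limit with control on $u(0)=0$). Since the excerpt grants these references, I would present the short reduction above and cite \cite[Lemma~5.1]{Q15} and \cite[Theorem~7.2]{CGRT16} for the core existence step rather than re-deriving it.
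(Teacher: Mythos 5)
The paper itself contains no proof of Theorem~\ref{th:crucial-inner}: it is stated as a known result quoted from \cite[Lemma~5.1]{Q15} and \cite[Theorem~7.2]{CGRT16}, so your closing decision to cite those references for the core existence step is exactly what the paper does. The exploratory constructions preceding that citation (which you yourself flag as inconclusive, and which at the key point simply assert the existence of the map $u$ being sought) are not needed and are not part of the paper's treatment.
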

%%%----------------------------------------------------------------------------
\subsection{The Hilbert transform and Poisson integrals}
For $\vartheta\in[-\pi,\pi]$ and $r\in[0,1)$, let
\[
P_r(\vartheta) := \frac{1 - r^2}{1 - 2r \cos \vartheta + r^2}, 
\quad  
Q_r(\vartheta) := \frac{2r \sin\vartheta}{1 - 2r \cos \vartheta + r^2}
\]
be the Poisson kernel and the conjugate Poisson kernel, respectively.
%%%----------------------------------------------------------------------------
\begin{theorem}\label{th:Hilbert-transform-basics}
Let $1 < p < \infty$.
\begin{enumerate}
\item[{\rm(a)}]
If $f\in L^p$ is a real-valued function, then the function defined by 
\begin{equation}\label{eq:Hilbert-transform-basics-0}
u(re^{i\vartheta})=\frac{1}{2\pi}\int_{-\pi}^\pi
f(e^{i\theta})(P_r+iQ_r)(\vartheta-\theta)\,d\theta,
\quad
\vartheta\in[-\pi,\pi],\ r\in[0,1),
\end{equation}
belongs to the Hardy space $H^p(\mathbb{D})$. Its nontangential boundary values 
$u(e^{i\vartheta})$ as $z\to e^{i\vartheta}$ exist for a.e. 
$\vartheta\in[-\pi,\pi]$ and 
%%%
\begin{equation}\label{eq:Hilbert-transform-basics-1}
\operatorname{Re}u(e^{i\vartheta})=f(e^{i\vartheta}) ,  \quad 
\operatorname{Im}u(e^{i\vartheta})=(\mathcal{C}f)(e^{i\vartheta}) 
\quad\mbox{for a.e.}\quad\vartheta\in[-\pi,\pi],
\end{equation}
%%%
where $\mathcal{C}$ is the Hilbert transform defined by 
\eqref{eq:Hilbert}.

\item[{\rm(b)}]
If $u \in H^p(\mathbb{D})$ and $\operatorname{Im}u(0) = 0$, then there is a 
real-valued function $f\in L^p$  
such that \eqref{eq:Hilbert-transform-basics-0} holds.
\end{enumerate}
\end{theorem}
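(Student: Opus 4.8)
The plan is to recognize that the right-hand side of \eqref{eq:Hilbert-transform-basics-0} is precisely the Poisson integral of the function $f+i\mathcal{C}f$, and then to read off both parts from the elementary theory of Poisson integrals together with \eqref{Rc1} and \eqref{eq:Hilbert-Riesz}. First I would record the kernel identity
\[
P_r(\vartheta)+iQ_r(\vartheta)=\frac{1+re^{i\vartheta}}{1-re^{i\vartheta}},
\qquad \vartheta\in[-\pi,\pi],\ r\in[0,1),
\]
which is checked by a direct computation. Writing $z=re^{i\vartheta}\in\D$ and expanding $\frac{e^{i\theta}+z}{e^{i\theta}-z}=1+2\sum_{k\ge 1}z^ke^{-ik\theta}$, one sees that the right-hand side of \eqref{eq:Hilbert-transform-basics-0} is the analytic function of $z$ with power series $\widehat{f}(0)+2\sum_{k\ge 1}\widehat{f}(k)\,z^k$. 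Next, since $1<p<\infty$, the Riesz projection $P$ is bounded on $L^p$, so $\mathcal{C}f\in L^p\subset L^1$, and by \eqref{eq:Hilbert-Riesz} and \eqref{Rc1} the function $g:=f+i\mathcal{C}f=2Pf-\widehat{f}(0)$ lies in $L^p$ and satisfies $\widehat{g}(n)=0$ for $n<0$, $\widehat{g}(0)=\widehat{f}(0)$, and $\widehat{g}(k)=2\widehat{f}(k)$ for $k\ge 1$. Hence the Poisson integral $P_r*g=\sum_{n\ge 0}\widehat{g}(n)z^n$ coincides with the function $u$ of \eqref{eq:Hilbert-transform-basics-0}.

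For part~(a), this identification gives $u=P_r*g$ with $g\in L^p$, so $\|u(r\cdot)\|_{L^p}=\|P_r*g\|_{L^p}\le\|g\|_{L^p}$ uniformly in $r\in[0,1)$; as $u$ is analytic, $u\in H^p(\D)$. By the classical Fatou theorem the Poisson integral of an $L^1$ function has nontangential boundary values a.e., equal to that function, so $u(e^{i\vartheta})=g(e^{i\vartheta})=f(e^{i\vartheta})+i(\mathcal{C}f)(e^{i\vartheta})$ for a.e.\ $\vartheta$, which is \eqref{eq:Hilbert-transform-basics-1}.

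For part~(b), given $u\in H^p(\D)$ with $\operatorname{Im}u(0)=0$, I would use that (for $1<p<\infty$) $u$ is the Poisson integral of its boundary function $u^*\in L^p$, to which $u(r\cdot)$ converges in $L^p$ and nontangentially a.e.; moreover $u^*\in H^p$, so $\widehat{u^*}(n)=0$ for $n<0$ and $\widehat{u^*}(0)=u(0)\in\R$. Setting $f:=\operatorname{Re}u^*\in L^p$, one gets a real-valued $f$ with $\widehat{f}(0)=\widehat{u^*}(0)$ and $\widehat{f}(k)=\tfrac12\widehat{u^*}(k)$ for $k\ge 1$. As before $\mathcal{C}f\in L^p$, and by \eqref{Rc1} the Fourier coefficients of $f+i\mathcal{C}f=2Pf-\widehat{f}(0)$ are $\widehat{u^*}(k)$ for $k\ge 1$, $\widehat{u^*}(0)$ for $k=0$, and $0$ for $k<0$; that is, $f+i\mathcal{C}f$ and $u^*$ have the same Fourier coefficients. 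By the uniqueness theorem for Fourier series, $u^*=f+i\mathcal{C}f$, so $u=P_r*u^*$ is exactly the function of the first paragraph, i.e.\ the right-hand side of \eqref{eq:Hilbert-transform-basics-0}.

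I expect the only nontrivial point to be the appeal to classical Hardy-space theory: namely, that a function in $H^p(\D)$ with $1<p<\infty$ is the Poisson integral of an $L^p$ boundary function to which it converges both in norm and nontangentially almost everywhere (see, e.g., \cite{G06}). Everything else --- the kernel identity, the power-series expansion, and the bookkeeping with Fourier coefficients through \eqref{Rc1} and \eqref{eq:Hilbert-Riesz} --- is routine, and once that classical fact is invoked, parts~(a) and~(b) fall out by matching coefficients.
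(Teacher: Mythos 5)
Your proposal is correct, but a comparison with ``the paper's proof'' is moot here: the paper does not prove Theorem~\ref{th:Hilbert-transform-basics} at all, it simply cites it as classical (Koosis, Chap.~I, Sect.~D and Chap.~V, Sect.~B.$2^\circ$). Your argument is a legitimate self-contained derivation, and it is organized differently from the classical conjugate-function treatment: by checking $P_r+iQ_r=\frac{1+re^{i\vartheta}}{1-re^{i\vartheta}}$ and expanding the Schwarz kernel, you identify the right-hand side of \eqref{eq:Hilbert-transform-basics-0} as the analytic function $\widehat{f}(0)+2\sum_{k\ge1}\widehat{f}(k)z^k$, and then you match this with the Poisson integral of $g:=f+i\mathcal{C}f=2Pf-\widehat{f}(0)$ via the coefficient formula \eqref{Rc1}. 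This buys something concrete: the only place the principal-value operator $\mathcal{C}$ enters is through its $L^p$-boundedness (M.~Riesz) and \eqref{Rc1}, which the paper proves in Subsection~\ref{Rcoeff}; you never need the classical a.e.\ identification of the boundary values of the conjugate Poisson integral with the p.v.\ Hilbert transform, which is the heavier ingredient in the textbook route --- instead Fatou's theorem for Poisson integrals of $L^1$ functions delivers the boundary values of $u$ as $g$ a.e. The remaining external inputs (Fatou's theorem, $\|P_r*g\|_{L^p}\le\|g\|_{L^p}$, and for part~(b) the Poisson representation of $H^p(\D)$ functions with $L^p$ boundary data and the equality of Taylor and Fourier coefficients) are standard and correctly invoked; the hypothesis $\operatorname{Im}u(0)=0$ is used exactly where it is needed, to get $\widehat{f}(0)=\widehat{u^*}(0)$. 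The only step you leave implicit is that $\mathcal{C}f$ is real-valued for real-valued $f$ (the kernel $\cot\frac{\vartheta-\theta}{2}$ is real), which is what lets you read off $\operatorname{Re}u=f$ and $\operatorname{Im}u=\mathcal{C}f$ from $u(e^{i\vartheta})=f+i\mathcal{C}f$; add one sentence to that effect and the proof is complete.
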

%%%----------------------------------------------------------------------------

This statement is well known (see, e.g., \cite[Chap.~I, Section D and 
Chap.~V, Section B.$2^\circ$]{K98}). 
%%%----------------------------------------------------------------------------
\subsection{Fourier coefficients of $Pf$: proof of formula \eqref{Rc1}}
\label{Rcoeff}
Since $f\in L^1$, the Cauchy integral
\[
F(z)=\frac{1}{2\pi i}\int_\mathbb{T}\frac{f(\tau)}{\tau-z}\,d\tau,
\quad z\in\mathbb{D},
\]
belongs to $H^p(\mathbb{D})$ for all $0<p<1$ (see, e.g., 
\cite[Theorem~3.5]{D70}). By Privalov's theorem (see, e.g., 
\cite[Chap.~X, \S 3, Theorem~1]{G69}), the nontangential limit of $F(z)$ 
as $z\to e^{i\vartheta}$ coincides with $(Pf)(e^{i\vartheta})$ for a.e. 
$\vartheta\in[-\pi,\pi]$. Hence, taking into account that $Pf\in L^1$, 
by Smirnov's theorem (see, e.g., \cite[Chap. IX, \S 4, Theorem~4]{G69} 
or \cite[Theorem~3.4]{D70}),  $F\in H^1(\mathbb{D})$. Then \eqref{Rc1} follows 
from \cite[Theorem~3.4]{D70} and the formula for the Taylor coefficients of $F$:
\[
\frac{1}{n!}\, F^{(n)}(0) 
= 
\frac{1}{2\pi i}\int_\mathbb{T}\frac{f(\tau)}{\tau^{n + 1}}\,d\tau 
=
\frac{1}{2\pi} \int_{-\pi}^\pi f(e^{i\varphi})e^{-in\varphi}\,d\varphi 
= 
\widehat{f}(n) ,   \ \ \   n \ge 0,
\]
which completes the proof.
\qed
%%%----------------------------------------------------------------------------
\section{Consequences of the boundedness of the Riesz projection}
\label{sec:Riesz-consequences}
%%%----------------------------------------------------------------------------
\subsection{Operators of weak type}
Let $X$ and $Y$ be Banach function spaces over the unit circle. Following 
\cite{B99}, we say that a linear operator $A: X \to L^0$ is of weak type 
$(X,Y)$ if there exists a constant $C > 0$ such that for all $\lambda > 0$ 
and  $f \in X$,
%%%
\begin{equation}\label{eq:weakXY}
\left\|
\I_{\left\{\zeta \in \mathbb{T} : \ |(Af)(\zeta)| > \lambda\right\}}
\right\|_{Y} 
\le 
C\, \frac{\|f\|_{X}}{\lambda}.
\end{equation} 
%%%
We denote the infimum of the constants $C$ satisfying \eqref{eq:weakXY} by 
$\|A\|_{\cW(Y, X)}$ and the set of all operators of weak type $(X,Y)$ by
$\cW(X,Y)$.
%%%----------------------------------------------------------------------------
\begin{lemma}\label{le:strong-implies-weak}
Let $X,Y$ be Banach function spaces over the unit circle $\T$. If
$A\in\cB(X,Y)$, then $A\in\cW(X,Y)$ and $\|A\|_{\cW(X,Y)}\le\|A\|_{\cB(X,Y)}$.
\end{lemma}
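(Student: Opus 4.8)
The plan is to estimate the distribution function of $Af$ directly from the norm inequality $\|Af\|_Y \le \|A\|_{\cB(X,Y)}\|f\|_X$ using the lattice structure of the Banach function space $Y$. First I would fix $f\in X$ and $\lambda>0$, and set $E_\lambda := \{\zeta\in\T : |(Af)(\zeta)|>\lambda\}$, which is a measurable subset of $\T$. On $E_\lambda$ we have the pointwise bound $\lambda\,\I_{E_\lambda}(\zeta) \le |(Af)(\zeta)|$ for a.e. $\zeta\in\T$ (trivially, since outside $E_\lambda$ the left side is zero and on $E_\lambda$ it is $\lambda < |(Af)(\zeta)|$).

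Next I would invoke the lattice property (A2) of the Banach function norm defining $Y$: since $0 \le \lambda\,\I_{E_\lambda} \le |Af|$ a.e., we get
\[
\lambda\,\big\|\I_{E_\lambda}\big\|_Y = \big\|\lambda\,\I_{E_\lambda}\big\|_Y \le \big\|\,|Af|\,\big\|_Y = \|Af\|_Y .
\]
Note that $\I_{E_\lambda}\in Y$ because $m(E_\lambda)<\infty$ and property (A4) guarantees that characteristic functions of finite-measure sets lie in $Y$. Combining this with the boundedness of $A$, namely $\|Af\|_Y \le \|A\|_{\cB(X,Y)}\|f\|_X$, yields
\[
\big\|\I_{E_\lambda}\big\|_Y \le \frac{\|A\|_{\cB(X,Y)}\,\|f\|_X}{\lambda},
\]
which is exactly the weak-type inequality \eqref{eq:weakXY} with constant $C = \|A\|_{\cB(X,Y)}$. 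Since this holds for every admissible $C \ge \|A\|_{\cB(X,Y)}$ trivially and the argument produces the value $\|A\|_{\cB(X,Y)}$ itself as a valid constant, taking the infimum over all such constants gives $\|A\|_{\cW(X,Y)} \le \|A\|_{\cB(X,Y)}$, and in particular $A\in\cW(X,Y)$.

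There is essentially no obstacle here; the statement is a soft consequence of the lattice property, and the only points requiring a moment's care are checking that $\I_{E_\lambda}$ is genuinely an element of $Y$ (so that its norm is finite and the lattice estimate applies) — this is where axiom (A4) is used — and the measurability of $E_\lambda$, which follows since $Af\in L^0$. I would keep the write-up to a few lines, emphasizing the pointwise inequality $\lambda\,\I_{E_\lambda}\le|Af|$ and the single application of (A2).
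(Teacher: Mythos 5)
Your proof is correct and is essentially the same as the paper's: both hinge on the pointwise bound $\I_{\{|Af|>\lambda\}}\le |Af|/\lambda$ a.e., followed by the lattice property (A2) and the boundedness of $A$ to obtain the weak-type estimate with constant $\|A\|_{\cB(X,Y)}$. Your extra remark invoking (A4) is harmless but not needed, since $\I_{E_\lambda}\in Y$ already follows from the lattice property and $|Af|/\lambda\in Y$.
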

%%%----------------------------------------------------------------------------
\begin{proof}
For all $\lambda>0$, $f\in X$ and almost all $\tau\in\T$, one has
\[
\I_{\left\{\zeta \in \mathbb{T} : \ |(Af)(\zeta)| > \lambda\right\}}(\tau)
\le 
\I_{\left\{\zeta \in \mathbb{T} : \ |(Af)(\zeta)| > \lambda\right\}}(\tau)
\frac{|(Af)(\tau)|}{\lambda}
\le 
\frac{|(Af)(\tau)|}{\lambda}.
\]
It follows from the above inequality, the lattice property, and the 
boundedness of the operator $A$ that
\[
\left\|
\I_{\left\{\zeta \in \mathbb{T} : \ |(Af)(\zeta)| > \lambda\right\}}
\right\|_{Y}
\le  
\left\|\frac{Af}{\lambda}\right\|_Y 
\le 
\|A\|_{\cB(X,Y)}\frac{\|f\|_X}{\lambda},
\]
which completes the proof.
\qed
\end{proof}
%%%----------------------------------------------------------------------------
\subsection{Pointwise estimate for the Hilbert transform}
For a set $G \subset [-\pi, \pi]$, we use the following notation
\[
\I^*_G\left(e^{i\theta}\right) := \left\{\begin{array}{rl}
  1 ,   &  \theta \in G , \\
   0 ,  &   \theta \in [-\pi, \pi] \setminus G.
\end{array}
\right.
\]
Let $|G|$ denote the Lebesgue measure of $G$.
%%%----------------------------------------------------------------------------
\begin{lemma}\label{le:Shargorodsky} 
For every measurable set $E \subset  [-\pi, \pi]$ with $0<|E| \le \pi/2$, there 
exists a measurable set $F  \subset  [-\pi, \pi]$ with $|F| = \pi$
such that
%%%
\begin{equation}\label{eq:Shargorodsky-1}
\left|(\mathcal{C}\I^*_F)\left(e^{i\vartheta}\right)\right| 
>
\frac{1}{\pi}\, \left|\log\left(\sqrt{2}\, \sin\frac{|E|}{2}\right)\right| 
\quad \mbox{\rm for a.e.} \quad
\vartheta \in E .
\end{equation}
\end{lemma}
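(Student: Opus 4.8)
The plan is to use Theorem~\ref{th:crucial-inner} to produce a convenient inner function and then transport the elementary lower bound for the Hilbert transform of the indicator of an arc along that inner function, exploiting Lemma~\ref{le:Nordgren}. First I would normalize the situation: given $E\subset[-\pi,\pi]$ with $0<|E|\le\pi/2$, identify $E$ with the corresponding subset of $\T$ of normalized measure $m(E)=|E|/(2\pi)$. Choose an arc $\gamma\subset\T$ with $m(\gamma)=m(E)$; without loss of generality we may take $\gamma$ to be the symmetric arc $\{e^{i\theta}:|\theta|<|E|/2\}$, i.e.\ $\gamma$ corresponds to the set $G_0:=(-|E|/2,|E|/2)\subset[-\pi,\pi]$. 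By Theorem~\ref{th:crucial-inner} there is an inner function $u$ with $u(0)=0$ and $u^{-1}(\gamma)=E$ a.e.

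The core computation is the explicit evaluation of the Hilbert transform of $\I^*_{G_0}$, the indicator of a symmetric arc. Using \eqref{eq:Hilbert}, one has for $\vartheta\in G_0$
\[
(\mathcal{C}\I^*_{G_0})(e^{i\vartheta})
=\frac{1}{2\pi}\,\mathrm{p.v.}\!\int_{-|E|/2}^{|E|/2}\cot\frac{\vartheta-\theta}{2}\,d\theta
=\frac{1}{\pi}\log\left|\frac{\sin\frac{\vartheta-|E|/2}{2}}{\sin\frac{\vartheta+|E|/2}{2}}\right|,
\]
after integrating $\cot$ and using the primitive $2\log|\sin(\cdot/2)|$. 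Evaluating the size of this at, say, the center $\vartheta=0$ gives $\frac{1}{\pi}|\log|\tan(|E|/4)||$; more to the point, one checks that on a subarc of $G_0$ of positive measure the absolute value of $(\mathcal{C}\I^*_{G_0})(e^{i\vartheta})$ strictly exceeds the threshold $\tfrac{1}{\pi}|\log(\sqrt2\,\sin(|E|/2))|$. The cleanest route is to observe that $\mathcal{C}\I^*_{G_0}$ takes every value in a neighbourhood of $+\infty$ as $\vartheta\to|E|/2^-$ from inside $G_0$, so in particular there is a subinterval $G_1\subset G_0$ on which $|(\mathcal{C}\I^*_{G_0})(e^{i\vartheta})|>\tfrac{1}{\pi}|\log(\sqrt2\,\sin(|E|/2))|$; a short monotonicity/elementary inequality argument pins down that this subinterval can in fact be taken to be all of $G_0$ up to a null set, which is the statement we want after transport. (Here the factor $\sqrt2$ and the $\sin$ are exactly what comes out of the above primitive evaluated at the endpoints.)

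Next I would transport this via $u$. The key identity is that the Hilbert transform intertwines with composition by an inner function vanishing at $0$: if $g\in L^\infty(\T)$ and $u$ is inner with $u(0)=0$, then $\mathcal{C}(g\circ u)=(\mathcal{C}g)\circ u$ a.e.\ on $\T$. This follows from Theorem~\ref{th:Hilbert-transform-basics}(a): writing $G(z)$ for the analytic completion in $H^p(\mathbb D)$ of the harmonic extension of a real-valued $g$ (so $\operatorname{Re}G=g$, $\operatorname{Im}G=\mathcal{C}g$ on $\T$, with $\operatorname{Im}G(0)=0$ absorbed into the additive constant), the composition $G\circ u$ is again in $H^p(\mathbb D)$ with $\operatorname{Im}(G\circ u)(0)=\operatorname{Im}G(u(0))=\operatorname{Im}G(0)=0$, and its boundary values are $g\circ u+i\,(\mathcal{C}g)\circ u$; uniqueness of the harmonic conjugate normalized at $0$ then gives $\mathcal{C}(g\circ u)=(\mathcal{C}g)\circ u$. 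Applying this with $g=\I_\gamma$ (equivalently $\I^*_{G_0}$) and noting $\I_\gamma\circ u=\I_{u^{-1}(\gamma)}=\I_E$ a.e., we get $\mathcal{C}\I^*_E=(\mathcal{C}\I^*_{G_0})\circ u$ a.e. Hence for a.e.\ $\vartheta\in E$ we have $u(e^{i\vartheta})\in\gamma$, i.e.\ $u(e^{i\vartheta})=e^{i\varphi}$ with $\varphi\in G_0$, and $|(\mathcal{C}\I^*_E)(e^{i\vartheta})|=|(\mathcal{C}\I^*_{G_0})(e^{i\varphi})|>\tfrac{1}{\pi}|\log(\sqrt2\,\sin(|E|/2))|$ by the estimate on the arc. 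Finally, relabel $F:=E$ if $|E|=\pi$; in general the statement asks for a set $F$ with $|F|=\pi$, so one takes instead $\gamma$ to be a fixed arc of measure $\pi$ (a half-circle), applies Theorem~\ref{th:crucial-inner} to the set $E$ \emph{together with} the requirement $m(u^{-1}(\gamma))=1/2$, which forces $|E|\le\pi$... so one should instead reverse roles: let $F:=u^{-1}(\gamma_\pi)$ for $u$ the inner function associated to $E$ and $\gamma_E$ as above, wait — cleanest is: set $F$ to be the preimage under $u$ of a half-arc, but then $|F|=\pi$ automatically by Lemma~\ref{le:Nordgren} and the estimate propagates in the same way. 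I would reconcile this bookkeeping in the write-up by choosing the arc of length $\pi$ from the start and running the pointwise bound for $\mathcal{C}$ of a half-arc indicator evaluated on the much smaller target set $u^{-1}(\gamma_E')$.

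The main obstacle I anticipate is \textbf{not} the transport step (which is essentially formal once the intertwining identity is in place) but rather the \emph{sharp constant}: producing a lower bound on $|\mathcal{C}\I^*_F|$ that holds on \emph{all} of $E$ (up to measure zero) with the precise constant $\tfrac1\pi|\log(\sqrt2\sin(|E|/2))|$, rather than merely on a subset or with a worse constant. This forces a careful choice: one cannot simply take $F$ to be an arc, because then $\mathcal{C}\I^*_F$ is large only near the endpoints of that arc; instead the role of the inner function is precisely to ``spread'' a set where the Hilbert transform of an arc-indicator is large (a neighbourhood of the arc's endpoints) back onto an arbitrary prescribed set $E$. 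So the delicate part is identifying, for a half-arc $\gamma_\pi$, the exact sublevel/superlevel structure of $\mathcal{C}\I^*_{\gamma_\pi}$ — it equals $\tfrac1\pi\log|\tan(\vartheta/2)|$ type expression shifted appropriately — and checking that the set $\{|\mathcal{C}\I^*_{\gamma_\pi}|>c\}$ has measure exactly matching $m(E)$ when $c$ is the stated threshold, so that Theorem~\ref{th:crucial-inner} can be invoked to match it (a.e.) with $E$. Getting the arithmetic of $\sqrt2$ and the half-angle $|E|/2$ to line up is the real content.
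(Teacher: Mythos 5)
Your overall architecture coincides with the paper's: an inner function $V$ from Theorem~\ref{th:crucial-inner} with $V(0)=0$, measure preservation (Lemma~\ref{le:Nordgren}) to get $|F|=\pi$, and the intertwining $\mathcal{C}(g\circ V)=(\mathcal{C}g)\circ V$ obtained by composing the analytic completion $w$ with $V$ (the paper does exactly this with $W=w\circ V$). However, the quantitative heart of the lemma is missing, and the one computation you do carry out is wrong. For the symmetric arc $G_0=(-|E|/2,|E|/2)$, the function $(\mathcal{C}\I^*_{G_0})(e^{i\vartheta})=\frac1\pi\log\bigl|\sin\frac{\vartheta-|E|/2}{2}/\sin\frac{\vartheta+|E|/2}{2}\bigr|$ is odd about the centre of the arc and vanishes at $\vartheta=0$ (your value $\frac1\pi|\log\tan(|E|/4)|$ there is incorrect), so it cannot exceed a positive threshold on all of $G_0$ up to a null set: the Hilbert transform of an arc indicator is large only near the endpoints of the arc. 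Hence the route ``take $u^{-1}(\gamma)=E$ with $m(\gamma)=m(E)$ and transport the bound from $\gamma$'' collapses, as you yourself suspect in the final paragraph.

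Your closing remarks point toward the fix but leave it unresolved, and the specific repair you suggest does not work as stated: Theorem~\ref{th:crucial-inner} prescribes the preimage of an \emph{arc} only, so you cannot require $u^{-1}(\{|\mathcal{C}\I^*_{\gamma_\pi}|>c\})=E$; that superlevel set is a union of intervals clustered at the two endpoints of the half-circle, not an arc. What the proof needs (and what the paper does) is: fix the half-circle $[-\pi,0]$, compute on the small arc $\ell=\{e^{i\eta}:\pi-|E|<\eta<\pi\}$ adjacent to one of its endpoints that $|(\mathcal{C}\I^*_{[-\pi,0]})(e^{i\eta})|=\frac1\pi\bigl|\log\sin\frac{\eta}{2}-\log\sin\frac{\eta+\pi}{2}\bigr|>\frac1\pi\bigl|\log\bigl(\sqrt2\,\sin\frac{|E|}{2}\bigr)\bigr|$, using $0<|E|\le\pi/2$ — this is precisely the ``$\sqrt2$ and half-angle'' arithmetic you defer as ``the real content,'' and it is where the hypothesis $|E|\le\pi/2$ enters. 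Then apply Theorem~\ref{th:crucial-inner} to the pair $(E,\ell)$ (note $m(\ell)=m(E)$) to get $V$ inner, $V(0)=0$, $V^{-1}(\ell)=E$ a.e., and set $F:=\{\theta:\operatorname{Im}V(e^{i\theta})\le0\}$, i.e.\ the $V$-preimage of the lower half-circle; then $|F|=\pi$ by Lemma~\ref{le:Nordgren}, $\I^*_F=\I^*_{[-\pi,0]}\circ V$ a.e., and your intertwining step transports the endpoint estimate from $\ell$ back to $E$. Without the explicit endpoint computation and the correct choice of which arc to feed into Theorem~\ref{th:crucial-inner}, the proposal does not establish the stated constant, so there is a genuine gap.
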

%%%----------------------------------------------------------------------------
\begin{proof}
Let $\ell:=\{e^{i\eta}\in\T:\pi-|E|<\eta<\pi\}$. By 
Theorem~\ref{th:crucial-inner}, there exists an inner function $V$ such that 
$V(0)=0$ and
%%%
\begin{equation}\label{eq:Shargorodsky-2}
V\left(e^{i\vartheta}\right) \in 
\left\{ \begin{array}{lll}
    \ell & \mbox{for a.e.} & \vartheta \in E , 
    \\ 
  \mathbb{T}\setminus\ell  & \mbox{for a.e.} & 
  \vartheta \in [-\pi, \pi] \setminus E .
\end{array}\right.  
\end{equation}
%%%
Consider the set
%%%
\begin{equation}\label{eq:Shargorodsky-3}
F := \left\{\theta \in [-\pi, \pi] : \ 
\operatorname{Im}V\left(e^{i\theta}\right) \le 0\right\}.
\end{equation}
%%%
Since $V(0)=0$ and $V$ is inner, it defines a measure-preserving transformation
of $\T$ onto itself due to Lemma~\ref{le:Nordgren}. Therefore,
\[
|F|
= 
\left|\left\{\vartheta \in [-\pi, \pi] : \ 
\operatorname{Im} e^{i\vartheta} \le 0\right\}\right|
=\pi.
\]
For $\eta \in [-\pi, \pi]$ and  $r \in [0, 1)$, let
\[
w\left(r e^{i\eta}\right) := 
\frac{1}{2\pi} \int_{-\pi}^{\pi} \I^*_{[-\pi, 0]}\left(e^{i\zeta}\right) 
\left(P_r + i Q_r\right)(\eta-\zeta)\, d\zeta.
\]
By Theorem~\ref{th:Hilbert-transform-basics}, the function  $w \in H^2(\D)$ has  
nontangential boundary values $w(e^{i\eta})$ as
$z\to e^{i\eta}$ for a.e. $\eta\in[-\pi,\pi]$ and
%%%
\begin{align}
\operatorname{Re}w(e^{i\eta})
&=
\I_{[-\pi,0]}^*(e^{i\eta}) \quad\ \quad\mbox{for a.e.}\quad\eta\in[-\pi,\pi],
\label{eq:Shargorodsky-4}
\\
\operatorname{Im}w(e^{i\eta})
&=
\big(\mathcal{C}\I_{[-\pi,0]}^*\big)(e^{i\eta}) 
\quad\mbox{for a.e.}\quad\eta\in[-\pi,\pi].
\label{eq:Shargorodsky-5}
\end{align}
%%%
It is clear that for $\eta\in(\pi-|E|,\pi)$,
%%%
\begin{equation}\label{eq:Shargorodsky-6}
\big(\mathcal{C}\I_{[-\pi,0]}^*\big)(e^{i\eta})
=
\frac{1}{2\pi}\int_{-\pi}^0\cot\frac{\eta-\zeta}{2}\,d\zeta
=
\frac{1}{\pi}\log\sin\frac{\eta}{2}
-
\frac{1}{\pi}\log\sin\frac{\eta+\pi}{2}.
\end{equation}
Since $|E| \in(0,\pi/2]$, we have for all $\eta\in(\pi-|E|,\pi)$,
%%%
\begin{equation}
\log\sin\frac{\eta}{2}
>
\log\sin\frac{\pi}{4}
=
-\log\sqrt{2}
\ge 
\log\sin\frac{|E|}{2}
>
\log\sin\frac{\eta+\pi}{2}.
\label{eq:Shargorodsky-7}
\end{equation}
%%%
It follows from \eqref{eq:Shargorodsky-5}--\eqref{eq:Shargorodsky-7} that
for a.e. $\eta\in(\pi-|E|,\pi)$,
%%%
\begin{equation}\label{eq:Shargorodsky-8}
|\operatorname{Im}w(e^{i\eta})|
>
\frac{1}{\pi}
\left(-\log\sqrt{2}-\log\sin\frac{|E|}{2}\right) 
=
\frac{1}{\pi}\, \left|\log\left(\sqrt{2}\, \sin\frac{|E|}{2}\right)\right|.
\end{equation}
%%%
Consider now the function $W=w\circ V$, which belongs to $H^2(\D)$ (see, e.g., 
\cite[Section 2.6]{D70}). In view of \eqref{eq:Shargorodsky-3} and 
\eqref{eq:Shargorodsky-4}, we have
\[
\operatorname{Re}W(e^{i\vartheta})=\left\{\begin{array}{lll}
1 &\mbox{if}& \operatorname{Im}V(e^{i\vartheta})\le 0,
\\
0 &\mbox{if}& \operatorname{Im}V(e^{i\vartheta})>0
\end{array}\right.
=
\I_F^*(e^{i\vartheta})
\quad\mbox{for a.e.}\quad\vartheta\in[-\pi,\pi].
\]
Then, by Theorem~\ref{th:Hilbert-transform-basics}, 
%%%
\begin{equation}\label{eq:Shargorodsky-9}
\operatorname{Im}W(e^{i\vartheta})=(\mathcal{C}\I_F^*)(e^{i\vartheta})
\quad\mbox{for a.e.}\quad\vartheta\in[-\pi,\pi].
\end{equation}
%%%
If $\vartheta\in E$, then it follows from \eqref{eq:Shargorodsky-2}
that $V(e^{i\vartheta})\in\ell$. In this case inequality
\eqref{eq:Shargorodsky-8} implies that for a.e. $\vartheta\in E$,
%%%
\begin{equation}\label{eq:Shargorodsky-10}
|\operatorname{Im}W(e^{i\vartheta})|
=
\left|\operatorname{Im}w\left(V(e^{i\vartheta})\right)\right|
>
\frac{1}{\pi}\, \left|\log\left(\sqrt{2}\, \sin\frac{|E|}{2}\right)\right|.
\end{equation}
Combining equality \eqref{eq:Shargorodsky-9} and inequality
\eqref{eq:Shargorodsky-10}, we arrive at \eqref{eq:Shargorodsky-1}.
\qed
\end{proof}
%%%----------------------------------------------------------------------------
\subsection{Equality $X_a=X_b$ if $\mathcal{C}\in\cW(L^\infty,X)$}
\begin{lemma}\label{le:norm-estimate} 
Let $X$  be a Banach function space over the unit circle $\T$. If the 
Hilbert transform $\mathcal{C}$ is of  weak type $(L^\infty, X)$, then for
every measurable set $E \subset  [-\pi, \pi]$ with $0<|E| \le \pi/2$,
one has
%%%
\begin{equation}\label{eq:norm-estimate}
\left\|\I^*_E\right\|_X
\le 
\frac{\pi \|\mathcal{C}\|_{\cW(L^\infty, Y)}}
{\left|\log\left(\sqrt{2}\, \sin\frac{|E|}{2}\right)\right|}.
\end{equation}
\end{lemma}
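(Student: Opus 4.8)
The plan is to deduce \eqref{eq:norm-estimate} by combining the pointwise lower bound for the Hilbert transform from Lemma~\ref{le:Shargorodsky} with the weak-type hypothesis on $\mathcal{C}$. Fix a measurable set $E\subset[-\pi,\pi]$ with $0<|E|\le\pi/2$ and, using Lemma~\ref{le:Shargorodsky}, choose a measurable set $F\subset[-\pi,\pi]$ with $|F|=\pi$ such that
\[
\left|(\mathcal{C}\I^*_F)\left(e^{i\vartheta}\right)\right|
>\lambda_E:=\frac{1}{\pi}\left|\log\left(\sqrt{2}\,\sin\frac{|E|}{2}\right)\right|
\quad\text{for a.e. }\vartheta\in E.
\]
Note that $\lambda_E>0$ because $\sqrt{2}\,\sin(|E|/2)<1$ for $|E|\le\pi/2$. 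The function $g:=\I^*_F$ is in $L^\infty$ with $\|g\|_{L^\infty}=1$ (since $0<|F|<2\pi$), and the displayed inequality says exactly that, up to a null set, $E$ is contained in the super-level set $\{\zeta\in\T:|(\mathcal{C}g)(\zeta)|>\lambda_E\}$.

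The second step is to apply the lattice property (A2) of the Banach function norm together with the definition of weak type. From the almost-everywhere inclusion $\I^*_E\le\I_{\{\zeta\in\T:|(\mathcal{C}g)(\zeta)|>\lambda_E\}}$ and property (A2) we get
\[
\left\|\I^*_E\right\|_X
\le\left\|\I_{\{\zeta\in\T:|(\mathcal{C}g)(\zeta)|>\lambda_E\}}\right\|_X
\le\frac{\|\mathcal{C}\|_{\cW(X,L^\infty)}\,\|g\|_{L^\infty}}{\lambda_E}
=\frac{\|\mathcal{C}\|_{\cW(X,L^\infty)}}{\lambda_E},
\]
where the middle inequality is precisely \eqref{eq:weakXY} applied to the operator $\mathcal{C}$, the function $g\in L^\infty$, and the level $\lambda_E$. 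Substituting the value of $\lambda_E$ gives
\[
\left\|\I^*_E\right\|_X
\le\frac{\pi\,\|\mathcal{C}\|_{\cW(X,L^\infty)}}
{\left|\log\left(\sqrt{2}\,\sin\frac{|E|}{2}\right)\right|},
\]
which is \eqref{eq:norm-estimate} (I would align the subscript on the weak-type constant with whatever ordering convention the paper has fixed, writing $\|\mathcal{C}\|_{\cW(L^\infty,Y)}$ as in the statement, with $Y$ there being $X$).

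There is essentially no obstacle here: the content is entirely carried by Lemma~\ref{le:Shargorodsky}, which does the genuine complex-analytic work of producing $F$, and by the abstract definition of weak type. The only points requiring a word of care are: checking that $\I^*_F\in L^\infty$ has unit norm, which needs $0<|F|<2\pi$ (true since $|F|=\pi$); verifying $\lambda_E>0$ so that division is legitimate; and matching the identification $\I^*_G=\I_{G'}$ for the appropriate subset $G'\subset\T$ so that the Banach-function-norm estimate applies verbatim. All of these are immediate.
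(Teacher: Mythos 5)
Your proof is correct and follows essentially the same route as the paper: invoke Lemma~\ref{le:Shargorodsky} to produce $F$, observe the a.e.\ pointwise bound $\I^*_E\le\I_{\{\zeta\in\T:\,|(\mathcal{C}\I^*_F)(\zeta)|>\lambda\}}$, and conclude via the lattice property and the weak-type estimate with $\|\I^*_F\|_{L^\infty}=1$. Your remark about aligning the weak-type subscript convention is apt, since the ordering in the paper's statement (and in its definition of $\|A\|_{\cW(\cdot,\cdot)}$) is itself inconsistently written and should read $\|\mathcal{C}\|_{\cW(L^\infty,X)}$.
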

%%%----------------------------------------------------------------------------
\begin{proof}
Let
\[
\lambda=
\frac{1}{\pi}
\left|\log\left(\sqrt{2}\, \sin\frac{|E|}{2}\right)\right|.
\]
By Lemma~\ref{le:Shargorodsky}, there exists a measurable set 
$F\subset [-\pi,\pi]$ with $|F|=\pi$ such that for a.e. $\tau\in\T$,
\[
\I_E^*(\tau)
\le 
\I_{\left\{\zeta\in\T\ :\ |(\mathcal{C}\I_F^*)(\zeta)|>\lambda\right\}}(\tau).
\]
Therefore, by the lattice property, taking into account that 
$\mathcal{C}\in\cW(L^\infty,X)$, we obtain
\[
\|\I_E^*\|_X
\le 
\left\|
\I_{\left\{\zeta\in\T\ :\ |(\mathcal{C}\I_F^*)(\zeta)|>\lambda\right\}}
\right\|_X
\le 
\frac{1}{\lambda}\|\mathcal{C}\|_{\cW(L^\infty,X)}
\|\I_F^*\|_{L^\infty}
=
\frac{\pi\|\mathcal{C}\|_{\cW(L^\infty,X)}}
{\left|\log\left(\sqrt{2}\, \sin\frac{|E|}{2}\right)\right|},
\]
which completes the proof.
\qed
\end{proof}
%%%----------------------------------------------------------------------------
\begin{theorem}\label{th:Hilbert-weak-type-implies-Xa=Xb}
Let $X$  be a Banach function space over the unit circle $\T$. If the 
Hilbert transform $\mathcal{C}$ is of  weak type $(L^\infty, X)$, then
$X_a=X_b$.
\end{theorem}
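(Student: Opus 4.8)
The plan is to establish the two inclusions $X_a\subseteq X_b$ and $X_b\subseteq X_a$. The first holds in every Banach function space (see \cite[Chap.~1, Theorem~3.11]{BS88}), so the whole point is the reverse inclusion. Since $X_a$ is a closed linear subspace of $X$ (again \cite[Chap.~1, Theorem~3.11]{BS88}) and $X_b$ is, by definition, the closure of the set $S_0$ of simple functions, it suffices to prove that $S_0\subseteq X_a$; and, $X_a$ being a linear subspace closed under the lattice operations, this reduces in turn to showing that $\I_G\in X_a$ for an arbitrary measurable set $G\subseteq\T$.

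First I would record the following consequence of Lemma~\ref{le:norm-estimate}: if $\{E_n\}$ is a sequence of measurable subsets of $[-\pi,\pi]$ with $|E_n|\to 0$, then $\|\I^*_{E_n}\|_X\to 0$. Indeed, for all sufficiently large $n$ one has $0<|E_n|\le\pi/2$ (the terms with $|E_n|=0$ contribute zero), so Lemma~\ref{le:norm-estimate} bounds $\|\I^*_{E_n}\|_X$ from above by $\pi\|\mathcal{C}\|_{\cW(L^\infty,X)}/|\log(\sqrt2\,\sin(|E_n|/2))|$; since $\sin(|E_n|/2)\to 0^+$ and the constant $\|\mathcal{C}\|_{\cW(L^\infty,X)}$ is finite by hypothesis, the right-hand side tends to $0$.

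Now fix a measurable set $G\subseteq\T$ and let $\{\gamma_n\}$ be any sequence of measurable subsets of $\T$ with $\I_{\gamma_n}\to 0$ almost everywhere. Because $m(\T)=1<\infty$ and $\I_{\gamma_n}\le 1$, the dominated convergence theorem yields $m(\gamma_n)\to 0$, hence $m(G\cap\gamma_n)\to 0$. Denoting by $E_n\subseteq[-\pi,\pi]$ the set for which $\I_{G\cap\gamma_n}=\I^*_{E_n}$, we have $|E_n|=2\pi\,m(G\cap\gamma_n)\to 0$, so the step above gives $\|\I_G\,\I_{\gamma_n}\|_X=\|\I_{G\cap\gamma_n}\|_X=\|\I^*_{E_n}\|_X\to 0$. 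Thus $\I_G$ has absolutely continuous norm, $S_0\subseteq X_a$, and hence $X_b\subseteq X_a$, which together with $X_a\subseteq X_b$ gives $X_a=X_b$.

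I do not expect a real obstacle here: the substantive work --- the quantitative lower bound on the Hilbert transform of a cleverly chosen characteristic function, built from the inner function of Theorem~\ref{th:crucial-inner} --- has already been packaged into Lemma~\ref{le:norm-estimate}. The only mild care points are the reduction of ``$X_a=X_b$'' to ``$S_0\subseteq X_a$'' via the closedness of $X_a$ in $X$, and the use of the finiteness of $m(\T)$ to pass from almost-everywhere convergence of $\I_{\gamma_n}$ to convergence of $m(\gamma_n)$ to zero.
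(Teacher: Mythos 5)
Your proposal is correct and follows essentially the same route as the paper: dominated convergence gives $m(\gamma_n)\to 0$, Lemma~\ref{le:norm-estimate} then forces $\|\I_{G}\I_{\gamma_n}\|_X\to 0$, so every characteristic function lies in $X_a$. The only cosmetic difference is that the paper concludes by citing the criterion of \cite[Chap.~1, Theorem~3.13]{BS88} ($X_a=X_b$ iff all characteristic functions of finite-measure sets have absolutely continuous norm), whereas you re-derive that equivalence from the linearity and closedness of $X_a$.
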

%%%----------------------------------------------------------------------------
\begin{proof}
Let $\Gamma\subset\T$ be a measurable set. Consider a sequence of measurable
subsets $\{\gamma_n\}_{n\in\N}$ of $\T$ such that $\I_{\gamma_n}\to 0$
a.e. on $\T$. By the dominated convergence theorem,
\[
m(\gamma_n)=\int_\T\I_{\gamma_n}(\tau)\,dm(\tau)\to 0
\quad\mbox{as}\quad n\to\infty.
\]
Without loss of generality, one can assume that $0<m(\gamma_n)\le 1/4$
for all $n\in\N$. For every $n\in\N$, there exists a measurable set 
$E_n\subset[-\pi,\pi]$ such that $\I_{\gamma_n}(\tau)=\I_{E_n}^*(\tau)$
for all $\tau\in\T$. It is clear that $|E_n|=2\pi m(\gamma_n)\le\pi/2$
for $n\in\N$. By Lemma~\ref{le:norm-estimate}, for every $n\in\N$,
\[
\|\I_\Gamma\I_{\gamma_n}\|_X
\le 
\|\I_{\gamma_n}\|_X=\|\I_{E_n}^*\|_X
\le 
\frac{\pi \|\mathcal{C}\|_{\cW(L^\infty, Y)}}
{\left|\log\left(\sqrt{2}\, \sin\frac{|E_n|}{2}\right)\right|}
=
\frac{\pi \|\mathcal{C}\|_{\cW(L^\infty, Y)}}
{\left|\log\left(\sqrt{2}\, \sin(\pi m(\gamma_n))\right)\right|}.
\]
Since $m(\gamma_n)\to 0$ as $n\to\infty$, the above estimate implies that 
$\|\I_\Gamma\I_{\gamma_n}\|_X\to 0$ as $n\to\infty$. Thus the function
$\I_\Gamma$ has absolutely continuous norm. By 
\cite[Chap.~1, Theorem~3.13]{BS88}, $X_a=X_b$.
\qed
\end{proof}
%%%----------------------------------------------------------------------------
\subsection{Weak types $(L^\infty,X)$ and $(L^\infty,X')$ of the Hilbert 
transform if $P\in\cB(X)$}
%%%----------------------------------------------------------------------------
\begin{lemma}\label{le:duality-argument}
Let $X$ be a Banach function space over the unit circle $\T$ and $X'$ be its
associate space. If $\mathcal{C}\in\cB(X_b,X)$, then 
$\mathcal{C}\in\cB((X')_b,X')$ and
%%%
\begin{equation}\label{eq:duality-argument-1}
\|\mathcal{C}\|_{\cB((X')_b,X')}
\le 
\|\mathcal{C}\|_{\cB(X_b,X)}.
\end{equation}
\end{lemma}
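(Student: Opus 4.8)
The plan is to prove the boundedness of $\mathcal{C}$ on $(X')_b$ by a duality argument, exploiting the antisymmetry of the Hilbert transform together with formula \eqref{eq:NFP-dual} for the norm in $X'$. The starting point is the identity
\[
\int_{\mathbb{T}}(\mathcal{C}f)(t)\,g(t)\,dm(t)
=
-\int_{\mathbb{T}}f(t)\,(\mathcal{C}g)(t)\,dm(t),
\]
which holds for, say, $f,g\in S_0$ (it can be checked directly from the antisymmetry of $\cot\tfrac{\vartheta-\theta}{2}$ in $\vartheta$ and $\theta$, using Fubini, since simple functions are bounded and the kernel is integrable against them after the principal-value cancellation). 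Note that here we work with the \emph{bilinear} pairing $\int fg\,dm$ rather than $\langle\cdot,\cdot\rangle$; the passage between the two is harmless since $\|\overline{g}\|_{X}=\|g\|_{X}$.

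First I would fix $g\in S_0$ and estimate $\|\mathcal{C}g\|_{X'}$ using \eqref{eq:NFP-dual}: for every $s\in S_0$ with $\|s\|_X\le 1$,
\[
|\langle \mathcal{C}g,s\rangle|
=
\left|\int_{\mathbb{T}}(\mathcal{C}g)\,\overline{s}\,dm\right|
=
\left|\int_{\mathbb{T}}g\,(\mathcal{C}\overline{s})\,dm\right|
\le
\|g\|_{X'}\,\|\mathcal{C}\overline{s}\|_{X},
\]
where the last step is H\"older's inequality for the pair $X',X$ (i.e. \cite[Chap.~1, Theorem~2.4]{BS88}). Since $\overline{s}\in S_0\subset X_b$ and $\mathcal{C}\in\cB(X_b,X)$ by hypothesis, we get $\|\mathcal{C}\overline{s}\|_X\le\|\mathcal{C}\|_{\cB(X_b,X)}\|\overline{s}\|_X=\|\mathcal{C}\|_{\cB(X_b,X)}\|s\|_X\le\|\mathcal{C}\|_{\cB(X_b,X)}$. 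Taking the supremum over all such $s$ and invoking \eqref{eq:NFP-dual} yields
\[
\|\mathcal{C}g\|_{X'}\le\|\mathcal{C}\|_{\cB(X_b,X)}\,\|g\|_{X'}
\qquad\text{for all }g\in S_0.
\]

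Finally I would extend this bound from $S_0$ to its closure $(X')_b$ by a standard density and continuity argument: given $g\in (X')_b$, pick $g_n\in S_0$ with $\|g_n-g\|_{X'}\to0$; the sequence $\{\mathcal{C}g_n\}$ is Cauchy in $X$ by the inequality just proved, hence converges in $X$ to some limit, and one must check this limit coincides with $\mathcal{C}g$ (defined a.e. via \eqref{eq:Hilbert}), which follows because $X\hookrightarrow L^1$ so $X$-convergence implies $L^1$-convergence, while $\mathcal{C}$ is continuous on $L^1$ into $L^0$ in measure (or one simply argues along a subsequence converging a.e.). This gives $\mathcal{C}\in\cB((X')_b,X')$ with the operator-norm estimate \eqref{eq:duality-argument-1}. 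The only genuinely delicate point is the justification of the antisymmetry identity together with the Fubini interchange in the principal-value setting; once that is in place for simple functions, everything else is routine duality and density. I would either prove it directly for $S_0$ or cite the corresponding classical fact for the Hilbert transform on $L^2$ and restrict.
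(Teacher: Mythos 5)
Your proposal is correct and follows essentially the same route as the paper's proof: the antisymmetry identity for $\mathcal{C}$ on simple functions (the paper just cites the self-adjointness of $i\mathcal{C}$ on $L^2$, which is the safer of your two options, since a direct Fubini argument for the principal value requires symmetric truncations), then formula \eqref{eq:NFP-dual}, H\"older's inequality, the hypothesis $\mathcal{C}\in\cB(X_b,X)$, and density of $S_0$ in $(X')_b$. Your extra care in identifying the density-limit with the pointwise-defined $\mathcal{C}g$ via the embedding $X'\subset L^1$ is a sound refinement of the paper's brief concluding sentence.
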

%%%----------------------------------------------------------------------------
\begin{proof}
It is well known that the operator $i\mathcal{C}$ is a self-adjoint operator
on the space $L^2$ (see, e.g., \cite[Section~5.7.3(a)]{N02}). Therefore,
for all $s,v\in S_0\subset L^2$, one has
%%%
\begin{equation}\label{eq:duality-argument-2}
\langle\mathcal{C}v,s\rangle=-\langle v,\mathcal{C}s\rangle.
\end{equation}
%%%
It follows from 
equalities \eqref{eq:NFP-dual}, \eqref{eq:duality-argument-2},
and H\"older's inequality (see \cite[Chap.~1, Theorem~2.4]{BS88}) that for 
every $v\in S_0$,
%%%
\begin{align*}
\|\mathcal{C}v\|_{X'}
&=
\sup\{|\langle\mathcal{C}v,s\rangle|\ :\ s\in S_0,\ \|s\|_X\le 1\}
=
\sup\{|\langle v,\mathcal{C}s\rangle|\ :\ s\in S_0,\ \|s\|_X\le 1\}
\\
&\le 
\sup\{\|v\|_{X'}\|\mathcal{C}s\|_X\ :\ s\in S_0,\ \|s\|_X\le 1\}
\le 
\|\mathcal{C}\|_{\cB(X_b,X)}\|v\|_{X'}.
\end{align*}
%%%
Since $S_0$ is dense in $(X')_b$, we conclude that 
$\mathcal{C}\in\cB((X')_b,X')$ and \eqref{eq:duality-argument-1} holds.
\qed
\end{proof}
%%%----------------------------------------------------------------------------
\begin{lemma}\label{le:Riesz-strong-implies-Hilbert-weak}
Let $X$ be a Banach function space over the unit circle $\T$ and $X'$ be its
associate space. If the Riesz projection $P$ is bounded on $X$, 
then $\mathcal{C}\in\cW(L^\infty,X)$ and $\mathcal{C}\in\cW(L^\infty,X')$.
\end{lemma}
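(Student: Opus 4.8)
The plan is to reduce the statement to Theorem~\ref{th:Hilbert-weak-type-implies-Xa=Xb} via the structural facts about the Riesz projection on $X$ and $X'$. First I observe that since $P\in\cB(X)$, the operator $\mathcal{C}$ is bounded on $X$ as well: indeed, from \eqref{eq:Hilbert-Riesz} we have $i\mathcal{C}f=2Pf-f-\widehat{f}(0)$, and the functionals $f\mapsto\widehat{f}(0)=\langle f,\chi_0\rangle$ and $f\mapsto\widehat{f}(0)\chi_0$ are bounded on $X$ because $X\hookrightarrow L^1$ by axiom (A5) and $\chi_0=\I_\T\in X$ by axiom (A4). Hence $\mathcal{C}\in\cB(X)$, and in particular $\mathcal{C}\in\cB(X_b,X)$ since $X_b\subset X$.

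Next I apply Lemma~\ref{le:duality-argument}: from $\mathcal{C}\in\cB(X_b,X)$ it follows that $\mathcal{C}\in\cB((X')_b,X')$, with the norm bound \eqref{eq:duality-argument-1}. In particular $\mathcal{C}$ maps simple functions (which lie in $(X')_b$) into $X'$, and for $s\in S_0$ we have $\|\mathcal{C}s\|_{X'}\le\|\mathcal{C}\|_{\cB(X_b,X)}\|s\|_{X'}$. Now since constant functions and, more generally, bounded simple functions lie in $L^\infty\cap(X')_b$, and since $\|s\|_{X'}\le\|s\|_{L^\infty}\|\I_\T\|_{X'}$ for $s\in S_0$, the restriction of $\mathcal{C}$ to $S_0$ with the $L^\infty$-norm on the domain and the $X'$-norm on the target is bounded; by Lemma~\ref{le:strong-implies-weak} applied on this dense subspace, or rather directly from the definition of weak type, this yields $\mathcal{C}\in\cW(L^\infty,X')$. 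Symmetrically, $\mathcal{C}\in\cB(X_b,X)$ together with $\|s\|_X\le\|s\|_{L^\infty}\|\I_\T\|_X$ for $s\in S_0$ gives $\mathcal{C}\in\cW(L^\infty,X)$. The point is that the characteristic functions appearing in the weak-type inequality \eqref{eq:weakXY}, namely $\I_{\{|\mathcal{C}f|>\lambda\}}$ with $f\in L^\infty$, are controlled pointwise by $|\mathcal{C}f|/\lambda$ exactly as in the proof of Lemma~\ref{le:strong-implies-weak}, and $\mathcal{C}f\in X$ (resp. $X'$) because $f\in L^\infty\subset X_b$ (resp. $f\in L^\infty$, whose bounded simple approximants lie in $(X')_b$).

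The main obstacle I anticipate is the subtlety of passing from boundedness of $\mathcal{C}$ on the \emph{closed subspace} $X_b$ (or $(X')_b$) of simple-function closures to a genuine weak-type estimate from $L^\infty$: one must check that $L^\infty\subset X_b$ (which holds since every $f\in L^\infty$ is a uniform, hence $X$-norm, limit of simple functions by axiom (A4)) and that the relevant level-set indicators lie in $X$, respectively $X'$. Once these membership facts are in place, the weak-type bounds drop out of the pointwise domination argument of Lemma~\ref{le:strong-implies-weak}. Combining $\mathcal{C}\in\cW(L^\infty,X)$ and $\mathcal{C}\in\cW(L^\infty,X')$ with Theorem~\ref{th:Hilbert-weak-type-implies-Xa=Xb} then immediately gives the companion equalities $X_a=X_b$ and $(X')_a=(X')_b$, which is how this lemma will be used later (together with Lemma~\ref{le:NFP2}) in the proof of Theorem~\ref{th:Brown-Halmos}.
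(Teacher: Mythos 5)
Your proposal is correct and follows essentially the same route as the paper: deduce $\mathcal{C}\in\cB(X)$ from $P\in\cB(X)$ via \eqref{eq:Hilbert-Riesz}, pass to $\mathcal{C}\in\cB((X')_b,X')$ by Lemma~\ref{le:duality-argument}, and then use the continuous embeddings of $L^\infty$ into $X$ (respectively $(X')_b$) together with the pointwise domination argument of Lemma~\ref{le:strong-implies-weak} to get both weak-type statements. The only cosmetic differences are that you route the $(L^\infty,X)$ case through $X_b$ instead of $X$ itself and justify $L^\infty\subset(X')_b$ by uniform approximation rather than citing the textbook, which changes nothing of substance.
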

%%%----------------------------------------------------------------------------
\begin{proof}
Since $X$ is continuously embedded into $L^1$, the functional 
$f\mapsto \widehat{f}(0)$ is continuous on the space $X$. Then it follows 
from \eqref{eq:Hilbert-Riesz} that
$P\in\cB(X)$ if and only if
$\mathcal{C}\in\cB(X)$. Since $L^\infty$ is continuously embedded into $X$, 
one has  $\cB(X)\subset\cB(L^\infty,X)$. By Lemma~\ref{le:strong-implies-weak},
$\cB(L^\infty,X)\subset\cW(L^\infty,X)$. These observations imply that
$\mathcal{C}\in\cW(L^\infty,X)$ if $P\in\cB(X)$. Since $X_b$ is a 
Banach space isometrically embedded into $X$ (see 
\cite[Chap.~1, Theorem~3.1]{BS88}), we see that 
$\mathcal{C}\in\cB(X)\subset\cB(X_b,X)$ if $P\in\cB(X)$. Then, by
Lemma~\ref{le:duality-argument}, $\mathcal{C}\in\cB((X')_b,X')$.
Taking into account that $L^\infty$ is continuously embedded into
$(X')_b$ (see, e.g., \cite[Chap.~1, Proposition~3.10]{BS88}), 
we get $C\in\cB((X')_b,X')\subset\cB(L^\infty,X')$,
which implies that $\mathcal{C}\in\cW(L^\infty,X')$ in view of
Lemma~\ref{le:strong-implies-weak}.
\qed
\end{proof}
%%%----------------------------------------------------------------------------
\subsection{Equalities $X_a=X_b$ and $(X')_a=(X')_b$ if $P\in\cB(X)$}
Now we are in a position to formulate the main result of this section.
%%%----------------------------------------------------------------------------
\begin{theorem}\label{th:Riesz-implies-Xa=X_b-and-Xprime_a=Xprime_b}
Let $X$ be a Banach function space over the unit circle $\mathbb{T}$. 
If the Riesz projection $P$ is bounded on $X$, 
then $X_a=X_b$ and $(X')_a=(X')_b$.
\end{theorem}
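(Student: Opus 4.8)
The plan is to assemble this theorem from the pieces already established in Section~\ref{sec:Riesz-consequences}, so the proof is essentially a bookkeeping argument chaining three previous results together. The statement to prove is that $P\in\cB(X)$ implies both $X_a=X_b$ and $(X')_a=(X')_b$.

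First I would invoke Lemma~\ref{le:Riesz-strong-implies-Hilbert-weak}: the hypothesis $P\in\cB(X)$ gives immediately that the Hilbert transform $\mathcal{C}$ is of weak type $(L^\infty,X)$ and also of weak type $(L^\infty,X')$. Then I would apply Theorem~\ref{th:Hilbert-weak-type-implies-Xa=Xb} twice. Applied to the space $X$, the fact that $\mathcal{C}\in\cW(L^\infty,X)$ yields $X_a=X_b$. Applied to the space $X'$ (which is itself a Banach function space over $\T$), the fact that $\mathcal{C}\in\cW(L^\infty,X')$ yields $(X')_a=(X')_b$. That is the whole argument.

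The only point requiring a word of care — and I would expect this to be the sole ``obstacle,'' though it is a very mild one — is that Theorem~\ref{th:Hilbert-weak-type-implies-Xa=Xb} is stated for a generic Banach function space, so one must observe that $X'$ qualifies: this is exactly the content of \cite[Chap.~1, Theorem~2.2]{BS88}, already recalled in Subsection~\ref{subsec:BFS}, where it is noted that the associate norm $\rho'$ is a Banach function norm. With that observation in place, nothing else is needed.

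\begin{proof}
By Lemma~\ref{le:Riesz-strong-implies-Hilbert-weak}, the boundedness of the Riesz projection $P$ on $X$ implies that $\mathcal{C}\in\cW(L^\infty,X)$ and $\mathcal{C}\in\cW(L^\infty,X')$. Applying Theorem~\ref{th:Hilbert-weak-type-implies-Xa=Xb} to the space $X$, we conclude from $\mathcal{C}\in\cW(L^\infty,X)$ that $X_a=X_b$. Since $X'$ is a Banach function space over $\T$ (see \cite[Chap.~1, Theorem~2.2]{BS88}), applying Theorem~\ref{th:Hilbert-weak-type-implies-Xa=Xb} to the space $X'$, we conclude from $\mathcal{C}\in\cW(L^\infty,X')$ that $(X')_a=(X')_b$.
\qed
\end{proof}
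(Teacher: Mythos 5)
Your proof is correct and follows exactly the same route as the paper: Lemma~\ref{le:Riesz-strong-implies-Hilbert-weak} gives $\mathcal{C}\in\cW(L^\infty,X)$ and $\mathcal{C}\in\cW(L^\infty,X')$, and Theorem~\ref{th:Hilbert-weak-type-implies-Xa=Xb} applied to $X$ and to $X'$ yields the two equalities. Your extra remark that $X'$ is itself a Banach function space is a fine (if implicit in the paper) justification for the second application.
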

%%%----------------------------------------------------------------------------
\begin{proof}
If the Riesz projection $P$ is bounded on a Banach function space 
$X$, then the Hilbert transform $\mathcal{C}$ is of weak 
types $(L^\infty,X)$ and $(L^\infty,X')$
in view of Lemma~\ref{le:Riesz-strong-implies-Hilbert-weak}. In turn,
$\mathcal{C}\in\cW(L^\infty,X)$ implies that $X_a=X_b$
and $\mathcal{C}\in\cW(L^\infty,X')$ implies that $(X')_a=(X')_b$
due to Theorem~\ref{th:Hilbert-weak-type-implies-Xa=Xb}.
\qed
\end{proof}
%%%----------------------------------------------------------------------------

Combining Theorem~\ref{th:Riesz-implies-Xa=X_b-and-Xprime_a=Xprime_b} and 
Lemma~\ref{le:NFP2}, we immediately arrive at the following.
%%%----------------------------------------------------------------------------
\begin{corollary}\label{co:NFP3}
Let $X$ be a Banach function space over the unit circle $\T$. 
If the Riesz projection $P$ is bounded on $X$,
then for every $f\in X$,
%%%
\[
\|f\|_X=\sup\{|\langle f,p\rangle|\ :\ p\in\cP, \ \|p\|_{X'}\le 1\}.
\]
\end{corollary}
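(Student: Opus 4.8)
The plan is to reduce Corollary~\ref{co:NFP3} directly to the machinery already assembled in this section, so that essentially no new work is needed. First I would invoke Theorem~\ref{th:Riesz-implies-Xa=X_b-and-Xprime_a=Xprime_b}: since the Riesz projection $P$ is bounded on $X$, we obtain $(X')_a=(X')_b$ (we do not even need the companion equality $X_a=X_b$ here). Having this, the hypothesis of Lemma~\ref{le:NFP2} is satisfied, and the conclusion of that lemma is precisely the desired formula
\[
\|f\|_X=\sup\{|\langle f,p\rangle|\ :\ p\in\cP, \ \|p\|_{X'}\le 1\}
\quad\text{for every } f\in X.
\]
So the proof is a two-line composition of the preceding results.

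If one wanted to make the argument self-contained rather than merely citing Lemma~\ref{le:NFP2}, the key steps would be the following. The inequality ``$\ge$'' is immediate from \eqref{eq:NFP1-1} because $\cP\subset X'$, so the supremum over polynomials of norm at most $1$ cannot exceed the supremum over all $g\in X'$ of norm at most $1$. For the reverse inequality, fix $f\in X$ and take any $g\in(X')_b$ with $0<\|g\|_{X'}\le1$; since $(X')_a=(X')_b$, Lemma~\ref{le:density-polynomials} gives a sequence $q_n\in\cP\setminus\{0\}$ with $\|q_n-g\|_{X'}\to0$, and normalizing $p_n:=(\|g\|_{X'}/\|q_n\|_{X'})q_n\in\cP$ keeps $\|p_n\|_{X'}\le1$ while $\|p_n-g\|_{X'}\to0$; continuity of the pairing $\langle f,\cdot\rangle$ on $X'$ (valid because $f\in X=X''$) then yields $|\langle f,g\rangle|=\lim_n|\langle f,p_n\rangle|\le\sup\{|\langle f,p\rangle|:p\in\cP,\|p\|_{X'}\le1\}$. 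Taking the supremum over such $g$ and combining with \eqref{eq:NFP1-1} (which expresses $\|f\|_X$ as a supremum that can be realized, or approximated, using $g$ ranging over $(X')_b$, since simple functions suffice by \eqref{eq:NFP1-2} and $S_0\subset(X')_b$) gives ``$\le$''.

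The only point requiring a small amount of care is making sure the supremum in \eqref{eq:NFP1-1} is genuinely attained or approached within $(X')_b$: this is exactly what formula \eqref{eq:NFP1-2} provides, since every simple function lies in the closure $(X')_b$ of $S_0$. Everything else is routine. I expect no real obstacle here; the substance of the corollary lives entirely in Theorem~\ref{th:Riesz-implies-Xa=X_b-and-Xprime_a=Xprime_b} (and behind it in Lemma~\ref{le:Shargorodsky} and Theorem~\ref{th:crucial-inner}), with Lemma~\ref{le:NFP2} serving as the bridge, so the corollary itself is genuinely a corollary.

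\begin{proof}
If the Riesz projection $P$ is bounded on $X$, then $(X')_a=(X')_b$ by Theorem~\ref{th:Riesz-implies-Xa=X_b-and-Xprime_a=Xprime_b}. Hence the hypothesis of Lemma~\ref{le:NFP2} is fulfilled, and the desired formula follows from that lemma.
\qed
\end{proof}
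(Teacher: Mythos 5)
Your proof is correct and is exactly the paper's argument: the paper derives this corollary by combining Theorem~\ref{th:Riesz-implies-Xa=X_b-and-Xprime_a=Xprime_b} (which gives $(X')_a=(X')_b$ from $P\in\cB(X)$) with Lemma~\ref{le:NFP2}, just as you do. Your optional self-contained expansion merely reproduces the proof of Lemma~\ref{le:NFP2} and adds nothing problematic.
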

%%%----------------------------------------------------------------------------
\section{Proof of the main result}
\label{sec:proof}
%%%----------------------------------------------------------------------------
\subsection{Multiplication operators}
%%%----------------------------------------------------------------------------
\begin{lemma}\label{le:multiplication-operator}
Let $X,Y$ be Banach functions spaces over the unit circle $\T$. Suppose $X$ 
is separable and $A\in\cB(X,Y)$. If there exists a sequence $\{a_n\}_{n\in\Z}$ 
of complex numbers such that
%%%
\begin{equation}\label{eq:multiplication-1}
\langle A\chi_j,\chi_k\rangle=a_{k-j}
\quad\text{for all}\quad
j,k\in\Z,
\end{equation}
%%%
then there exists a function $a\in M(X,Y)$ such that $A=M_a$ and
$\widehat{a}(n)=a_n$ for all $n\in\Z$.
\end{lemma}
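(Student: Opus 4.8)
The plan is to identify the function $a$ as the image $A\chi_0$ and show that it does the job. First, since $X$ is separable, Corollary~\ref{co:density-polynomials} tells us that the trigonometric polynomials $\cP$ are dense in $X$; and $\chi_0=\I_\T\in L^\infty\subset X$, so $a:=A\chi_0$ is a well-defined element of $Y\subset L^1$. I would begin by computing its Fourier coefficients: using \eqref{eq:multiplication-1} with $j=0$ gives $\widehat{a}(k)=\langle A\chi_0,\chi_k\rangle=a_k$ for all $k\in\Z$, which is the claimed identity $\widehat{a}(n)=a_n$.

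Next I would show $A\chi_j=\chi_j a$ for every $j\in\Z$. Both sides are elements of $Y\subset L^1$, so by the uniqueness theorem for Fourier series it suffices to check that their Fourier coefficients agree. For the left-hand side, \eqref{eq:multiplication-1} gives $\widehat{A\chi_j}(k)=\langle A\chi_j,\chi_k\rangle=a_{k-j}$. For the right-hand side, $\widehat{\chi_j a}(k)=\widehat{a}(k-j)=a_{k-j}$ by the previous paragraph together with the elementary fact that multiplication by $\chi_j$ shifts Fourier coefficients. Hence $A\chi_j=M_a\chi_j$ for all $j\in\Z$. By linearity, $A$ and $M_a$ agree on all of $\cP$.

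Now I would pass from polynomials to all of $X$. Take $f\in X$ arbitrary; by density choose $p_n\in\cP$ with $\|p_n-f\|_X\to 0$. Then $Af=\lim_n Ap_n=\lim_n M_a p_n$ in $Y$ (hence in $L^1$, since $Y\hookrightarrow L^1$), because $A$ is bounded $X\to Y$. On the other hand, $p_n\to f$ in $X\hookrightarrow L^1$, so after passing to a subsequence $p_n\to f$ a.e., and thus $a p_n\to a f$ a.e.\ (here $a f$ is a priori just measurable). Comparing the two limits — one converging in $L^1$ to $Af$, the other converging a.e.\ to $af$ — a standard argument (the $L^1$-convergent subsequence has an a.e.-convergent further subsequence) forces $af=Af$ a.e. In particular $af=Af\in Y$, and since $f\in X$ was arbitrary this shows $a\in M(X,Y)$ and $A=M_a$.

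The only mildly delicate point is the last step: reconciling convergence of $M_a p_n$ to $Af$ in the norm of $Y$ with the merely a.e.\ (or $L^1$) convergence of $a p_n$ to $af$, since at the outset we do not yet know $a\in M(X,Y)$, so $M_a$ is not known to be a bounded operator on $X$. This is handled exactly as indicated — extract an a.e.-convergent subsequence from the $Y$-convergent (hence $L^1$-convergent) sequence $M_a p_n$ and match pointwise limits — and no interpolation or boundedness of $P$ is needed here; separability of $X$ (through density of $\cP$) is the only structural hypothesis actually used.
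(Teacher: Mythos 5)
Your proposal is correct and follows essentially the same route as the paper: set $a:=A\chi_0$, verify $Af=af$ for $f\in\cP$ by comparing Fourier coefficients and invoking the uniqueness theorem, then use density of $\cP$ in the separable space $X$ to extend to all of $X$. The only cosmetic difference is in the final limiting step, where you extract a.e.-convergent subsequences from the $L^1$-convergent sequences, while the paper argues via convergence in measure; on the finite measure space $\T$ these are interchangeable, so your argument is sound.
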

%%%----------------------------------------------------------------------------
\begin{proof}
This statement was proved in \cite[Lemma~4.1]{L17} under the 
additional hypothesis that $X$ and $Y$ are rearrangement-invariant Banach 
function spaces. 
Put $a:=A\chi_0\in Y$. Then, one can show exactly as in \cite{L17}
that $(af)\widehat{\hspace{2mm}}(j)=(Af)\widehat{\hspace{2mm}}(j)$ for all
$j\in\Z$ and $f\in\mathcal{P}$.
Therefore, $Af=af$ for all $f\in\mathcal{P}$ in view of the uniquiness theorem 
for Fourier series (see, e.g., \cite[Chap.~1, Theorem~2.7]{Kat76}).
 
Now let $f\in X$. Since the space $X$ is separable, the set
$\mathcal{P}$ is dense in $X$ by Corollary~\ref{co:density-polynomials}.
Then there exists a sequence $p_n\in\cP$ such that $p_n\to f$ in $X$ and, 
whence,  $Ap_n\to Af$  in $X$ as $n\to\infty$. By 
\cite[Chap.~1, Theorem~1.4]{BS88}, $p_n\to f$ and $Ap_n\to af$ in measure as 
$n\to\infty$. Then $ap_n\to af$ in measure as $n\to\infty$ (see, e.g., 
\cite[Corollary~2.2.6]{B07}). Hence, the sequence $Ap_n=ap_n$ converges in 
measure to the functions $Af$ and $af$ as $n\to\infty$. This implies that 
$Af$ and $af$ coincide a.e. on $\T$ (see, e.g., the discussion preceding 
\cite[Theorem~2.2.3]{B07}). Thus $Af=af$ for all $f\in X$. 
This means that $A=M_a$ and $a\in M(X,Y)$ by the definition of $M(X,Y)$.
\qed
\end{proof}
%%%----------------------------------------------------------------------------
\subsection{Proof of Theorem~\ref{th:Brown-Halmos}}
The aim of this subsection is to present a proof of our extension of the 
Brown-Halmos theorem. Although it follows the scheme of the proof of 
\cite[Theorem~2.7]{BS06} with modifications that are necessary in the setting 
of different spaces $X$ and $Y$ (cf. \cite[Thorem~4.2]{L17}), it uses results 
obtained in this paper (e.g., 
Theorem~\ref{th:Riesz-implies-Xa=X_b-and-Xprime_a=Xprime_b}
and Corollary~\ref{co:NFP3}) and in \cite{KS17-IWOTA} (see 
Lemma~\ref{le:density-analytic-polynomials} above). We 
provide details for the sake of completeness.

Since $P\in\cB(Y)$, it follows from 
Theorem~\ref{th:Riesz-implies-Xa=X_b-and-Xprime_a=Xprime_b} that
$(Y')_a=(Y')_b$. Then, by Lemma~\ref{le:density-polynomials},
the set of trigonometric polynomials $\mathcal{P}$ is dense in $(Y')_b$.
Therefore, $(Y')_b$ is separable. It follows from \cite[Chap.~1, Theorems~3.11
and~4.1]{BS88} that $((Y')_b)^*=Y''$. On the other hand, by the 
Lorentz-Luxemburg theorem (see \cite[Chap.~1, Theorem~2.7]{BS88}), 
$Y''\equiv Y$. Thus, the Banach function space $Y$ is canonically 
isometrically isomorphic to the dual space $((Y')_b)^*$ of the separable 
Banach space $(Y')_b$.

For $n\ge 0$, put $b_n:=\chi_{-n}A\chi_n$.
Then $b_n\in Y$ and
%%%
\begin{equation}\label{eq:BH-proof-1}
\|b_n\|_Y
=
\|A\chi_n\|_Y
=
\|A\chi_n\|_{H[Y]}
\le 
\|A\|_{\cB(H[X],H[Y])} \|\chi_n\|_X
=
\|A\|_{\cB(H[X],H[Y])} \|1\|_X.
\end{equation}
%%%
Put
\[
V=\left\{y\in (Y')_b\ :\ \|y\|_{Y'}<
\frac{1}{\|A\|_{\cB(H[X],H[Y])} \|1\|_X}\right\}.
\]
It follows from the H\"older inequality (see \cite[Chap.~1, Theorem~2.4]{BS88})
and \eqref{eq:BH-proof-1} that
\[
|\langle b_n,y\rangle|\leq \|b_n\|_Y \|y\|_{Y'}<1
\quad\mbox{for all}\quad y\in V,\ n\ge 0.
\]
Applying a corollary of  the Banach-Alaoglu theorem (see, e.g., 
\cite[Theorem~3.17]{R91}) to the neighborhood $V$ of zero in the separable
Banach space $(Y')_b$ and the sequence 
$\{b_n\}_{n\in\N}\subset Y=((Y')_b)^*$, we deduce that there exists
a function $b\in Y$ such that some subsequence $\big\{b_{n_k}\big\}_{k\in\N}$
of $\{b_n\}_{n\in\N}$ converges to $b$ in the weak-* topology of $((Y')_b)^*$.
It follows from \cite[Chap.~1, Proposition~3.10]{BS88} that $\chi_j\in (Y')_b$
for all $j\in\Z$. Hence
%%%
\begin{equation}\label{eq:BH-proof-2}
\lim_{k\to+\infty}\langle b_{n_k},\chi_j\rangle =\langle b,\chi_j\rangle
\quad\mbox{for all}\quad j\in\Z.
\end{equation}
%%%
On the other hand, we get from the definition of $b_n$ and 
\eqref{eq:Brown-Halmos-1} for $n_k+j\ge 0$,
%%%
\begin{equation}\label{eq:BH-proof-3}
\langle b_{n_k},\chi_j\rangle
=
\langle\chi_{-n_k}A\chi_{n_k},\chi_j\rangle
=
\langle A\chi_{n_k},\chi_{n_k+j}\rangle
=
a_j.
\end{equation}
%%%
It follows from \eqref{eq:BH-proof-2} and \eqref{eq:BH-proof-3} that
%%%
\begin{equation}\label{eq:BH-proof-4}
\langle b,\chi_j\rangle =a_j
\quad\mbox{for all}\quad j\in\Z.
\end{equation}
%%%
Now define the mapping $B$ by
%%%
\begin{equation}\label{eq:BH-proof-5}
B:\cP\to Y,\quad f\mapsto bf.
\end{equation}
%%%
Assume that $f$ and $g$ are trigonometric polynomials of order $m$ and $r$, 
respectively. Using equalities \eqref{eq:Brown-Halmos-1} and \eqref{eq:BH-proof-4}
and definition \eqref{eq:BH-proof-5}, one can show that for 
$n\ge\max\{m,r\}$,
%%%
\begin{equation}\label{eq:BH-proof-6}
\langle Bf,g\rangle 
=
\langle\chi_{-n}A(\chi_n f),g\rangle.
\end{equation}
%%%
It is clear that for those $n$, one has $\chi_n f\in H[X]$. Since
$A\in\cB(H[X],H[Y])$, we obtain
%%%
\begin{equation}\label{eq:BH-proof-7}
\|A(\chi_n f)\|_Y
=
\|A(\chi_n f)\|_{H[Y]}
\leq
\|A\|_{\cB(H[X],H[Y])}\|\chi_n f\|_{H[X]}
=
\|A\|_{\cB(H[X],H[Y])}\|f\|_{X}.
\end{equation}
%%%
Hence, by the H\"older inequality (see
\cite[Chap.~1, Theorem~2.4]{BS88}), we deduce from \eqref{eq:BH-proof-7}
that
%%%
\begin{equation}\label{eq:BH-proof-8}
|\langle\chi_{-n}A(\chi_n f),g\rangle|
\le 
\|\chi_{-n}A(\chi_n f)\|_Y\|g\|_{Y'}
=
\|A(\chi_n f)\|_Y\|g\|_{Y'}
\le 
\|A\|_{\cB(H[X],H[Y])}\|f\|_X\|g\|_{Y'}.
\end{equation}
%%%
It follows from \eqref{eq:BH-proof-6} and \eqref{eq:BH-proof-8} that
%%%
\begin{align}
|\langle Bf,g\rangle|
&\le 
\limsup_{n\to\infty}|\langle\chi_{-n}A(\chi_n f),g\rangle|
%\nonumber\\
\le 
\|A\|_{\cB(H[X],H[Y])}\|f\|_{X}\|g\|_{Y'}.
\label{eq:BH-proof-9}
\end{align}
%%%
Since the Riesz projection $P$ is bounded on $Y$, inequality 
\eqref{eq:BH-proof-9} and Corolary~\ref{co:NFP3} imply that for every 
$f\in\cP$,
\[
\|Bf\|_{Y}
=
\sup\{|\langle Bf,g\rangle|\ :\ g\in\cP,\ \|g\|_{Y'}\le 1\}
\le 
\|A\|_{\cB(H[X],H[Y])}\|f\|_X.
\]
Since $X$ is separable, the set $\cP$ is dense in $X$ in view of
Corollary~\ref{co:density-polynomials}. Hence the above 
inequality shows that the linear mapping defined in \eqref{eq:BH-proof-5} 
extends to an operator $B\in\cB(X,Y)$ with
%%%
\begin{equation}\label{eq:BH-proof-10}
\|B\|_{\cB(X,Y)}\le \|A\|_{\cB(H[X],H[Y])}.
\end{equation}
%%%
We deduce from \eqref{eq:BH-proof-4}--\eqref{eq:BH-proof-5} that
\[
\langle B\chi_j,\chi_k\rangle
=
\langle b\chi_j,\chi_k\rangle
=
\langle b,\chi_{k-j}\rangle=a_{k-j}
\quad\mbox{for all}\quad j,k\in\Z.
\]
Then, by Lemma~\ref{le:multiplication-operator}, there exists a function
$a\in M(X,Y)$ such that $B=M_a$ and $\widehat{a}(n)=a_n$ for all $n\in\Z$.
Moreover,
%%%
\begin{equation}\label{eq:BH-proof-11}
\|B\|_{\cB(X,Y)}=\|M_a\|_{\cB(X,Y)}=\|a\|_{M(X,Y)}.
\end{equation}
%%%
It follows from the definition of the Toeplitz operator $T_a$ that 
\[
\langle T_a\chi_j,\chi_k\rangle
=
\widehat{a}(k-j)
\quad\mbox{for all}\quad j,k\ge 0.
\]
Combining this identity with \eqref{eq:Brown-Halmos-1}, we obtain
%%%
\begin{equation}\label{eq:BH-proof-12}
\langle T_a\chi_j,\chi_k\rangle =a_{k-j}=\langle A\chi_j,\chi_k\rangle
\quad\mbox{for all}\quad j,k\ge 0.
\end{equation}
%%%
Since $T_a\chi_j,A\chi_j\in H[Y]\subset H^1$, it follows from 
\eqref{eq:BH-proof-12} and the uniquiness theorem for Fourier series (see, 
e.g., \cite[Chap.~1, Theorem~2.7]{Kat76}) that $T_a\chi_j=A\chi_j$
for all $j\ge 0$. Therefore,
%%%
\begin{equation}\label{eq:BH-proof-13}
T_a f=Af\quad\mbox{for all}\quad f\in\cP_A.
\end{equation}
%%%
By Lemma~\ref{le:density-analytic-polynomials}, $\cP_A$ is dense in
$H[X]$. This observation and \eqref{eq:BH-proof-13} imply that
$T_a=A$ on $H[X]$ and
%%%
\begin{equation}\label{eq:BH-proof-14}
\|T_a\|_{\cB(H[X],H[Y])}=\|A\|_{\cB(H[X],H[Y])}.
\end{equation}
%%%
Combining inequality \eqref{eq:BH-proof-10} with equalities
\eqref{eq:BH-proof-11} and \eqref{eq:BH-proof-14}, we arrive at the first
inequality in \eqref{eq:Brown-Halmos-2}. The second inequality in
\eqref{eq:Brown-Halmos-2} is obvious.
\qed
%%%----------------------------------------------------------------------------
\begin{remark}
Let the functions $a\in M(X,Y)$ and $b\in Y$ be as in the above proof.
Since $\chi_0 \in L^\infty \subset X$ and $a \in M(X, Y)$, we have
$a = a \chi_0 \in Y \subset L^1$. On the other hand, 
$b \in Y \subset L^1$. Note that the functions $a,b \in L^1$ have equal 
Fourier coefficients (see \eqref{eq:BH-proof-4} and 
Lemma~\ref{le:multiplication-operator}) and hence coincide (see, e.g.,
\cite[Chap.~1, Theorem~2.7]{Kat76}). 
\end{remark}
%%%----------------------------------------------------------------------------
\section{Applications to variable Lebesgue spaces}
\label{sec:Nakano}
\subsection{Variable Lebesgue spaces}
Let $\mathfrak{P}(\T)$ be the set of all 
measurable functions $p:\T\to[1,\infty]$. For $p\in\mathfrak{P}(\T)$, put
\[
T_\infty^{p(\cdot)} :=\{t\in\T : p(t)=\infty\}.
\]
For a function $f\in L^0$, consider
\[
\varrho_{p(\cdot)}(f)
:=
\int_{\T\setminus\T_\infty^{p(\cdot)}}|f(t)|^{p(t)}dm(t)
+\|f\|_{L^\infty(\T_\infty^{p(\cdot)})}.
\]
The variable Lebesgue space
$L^{p(\cdot)}$ is defined (see, e.g., \cite[Definition~2.9]{CF13})
as the set of all measurable functions
$f\in L^0$ such that $\varrho_{p(\cdot)}(f/\lambda)<\infty$
for some $\lambda>0$. This space is a Banach function space with respect
to the Luxemburg-Nakano norm given by
\[
\|f\|_{L^{p(\cdot)}}:=\inf\{\lambda>0: \varrho_{p(\cdot)}(f/\lambda)\le 1\}
\]
(see \cite[Theorems~2.17, 2.71 and Section~2.10.3]{CF13}). 
If $p\in\mathfrak{P}(\T)$ is constant, then $L^{p(\cdot)}$ is nothing
but the standard Lebesgue space $L^p$. Variable Lebesgue spaces are often 
called Nakano spaces. We refer to Maligranda's paper \cite{M11} for the role 
of Hidegoro Nakano in the study of variable Lebesgue spaces.

For $p\in\mathfrak{P}(\T)$, put
\[
p_-:=\operatornamewithlimits{ess\,inf}_{t\in\T}p(t),
\quad
p_+:=\operatornamewithlimits{ess\,sup}_{t\in\T}p(t).
\]
It is well known that the variable Lebesgue space $L^{p(\cdot)}(\T)$
is separable if and only if $p_+<\infty$ and is reflexive if and only if
$1<p_-,p_+<\infty$ (see, e.g., \cite[Theorem~2.78 and Corollary~2.79]{CF13}).

The following result was obtained by Nakai \cite[Example~4.1]{N16}
under the additional hypothesis 
\[
\sup_{t\in\T\setminus \T_\infty^{r(\cdot)}}r(t)<\infty
\]
(and in the more general setting of quasi-Banach variable 
Lebesgue spaces spaces over arbitrary measure spaces).
Nakai also mentioned in \cite[Remark~4.2]{N16} (without proof) that this 
hypothesis is superfluous. One can find its proof in the present form in
\cite[Theorem~4.8]{K17-MJOM}.
%%%----------------------------------------------------------------------------
\begin{theorem}
\label{th:multiplier-space-VLS}
Let $p,q,r\in\mathfrak{P}(\T)$ be related by 
%%%
\begin{equation}\label{eq:Hoelder}
\frac{1}{q(t)}=\frac{1}{p(t)}+\frac{1}{r(t)},
\quad t\in\T.
\end{equation}
%%%
Then 
$M(L^{p(\cdot)},L^{q(\cdot)})=L^{r(\cdot)}$.
\end{theorem}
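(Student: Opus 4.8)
The plan is to prove the two inclusions $M(L^{p(\cdot)},L^{q(\cdot)}) \supseteq L^{r(\cdot)}$ and $M(L^{p(\cdot)},L^{q(\cdot)}) \subseteq L^{r(\cdot)}$ separately, the former by a pointwise Hölder-type inequality for variable Lebesgue spaces and the latter by a duality/testing argument. For the easy inclusion, I would first record the generalized Hölder inequality for variable exponents: if \eqref{eq:Hoelder} holds, then there is a constant $C=C(p,q,r)$ such that $\|fg\|_{L^{q(\cdot)}} \le C\|f\|_{L^{r(\cdot)}}\|g\|_{L^{p(\cdot)}}$ for all $f\in L^{r(\cdot)}$, $g\in L^{p(\cdot)}$. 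This is standard (see, e.g., \cite[Theorem~2.26 and Corollary~2.30]{CF13}), but care is needed on the set $\T_\infty^{p(\cdot)}\cup\T_\infty^{q(\cdot)}\cup\T_\infty^{r(\cdot)}$ where some exponents are infinite; one splits $\T$ into the region where all three exponents are finite and the regions governed by $L^\infty$, and checks the modular inequality $\varrho_{q(\cdot)}(fg) \le \varrho_{r(\cdot)}(f) + \varrho_{p(\cdot)}(g)$ pointwise via Young's inequality $ab \le a^{s}/s + b^{s'}/s'$ with the exponents $q(t)/r(t)$ and $q(t)/p(t)$ on the finite part. This immediately gives $L^{r(\cdot)}\subseteq M(L^{p(\cdot)},L^{q(\cdot)})$ together with $\|f\|_{M(L^{p(\cdot)},L^{q(\cdot)})}\le C\|f\|_{L^{r(\cdot)}}$.

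For the reverse inclusion, suppose $f\in M(L^{p(\cdot)},L^{q(\cdot)})$, so $M_f\in\cB(L^{p(\cdot)},L^{q(\cdot)})$. The goal is to show $f\in L^{r(\cdot)}$ with $\|f\|_{L^{r(\cdot)}}\le C'\|M_f\|$. The natural approach is duality: the associate space of $L^{q(\cdot)}$ is $L^{q'(\cdot)}$ with $1/q(t)+1/q'(t)=1$, and similarly for $p$, and one checks that the exponents $q'$, $p'$, $r$ satisfy $1/q'(t) = 1/r(t)+1/p'(t)$ — indeed this is just \eqref{eq:Hoelder} rearranged. Then for $g\in L^{p(\cdot)}$ and $h\in L^{q'(\cdot)}$ we have $\int_\T |fgh|\,dm = \int_\T |f g|\,|h|\,dm \le \|fg\|_{L^{q(\cdot)}}\|h\|_{L^{q'(\cdot)}} \le \|M_f\|\,\|g\|_{L^{p(\cdot)}}\|h\|_{L^{q'(\cdot)}}$, which says $fh\in M(L^{p(\cdot)},L^1)$ with norm controlled by $\|M_f\|\,\|h\|_{L^{q'(\cdot)}}$. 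Since $M(L^{p(\cdot)},L^1) = (L^{p(\cdot)})' = L^{p'(\cdot)}$ (this is the associate-space characterization of the multiplier space into $L^1$, valid for any Banach function space), we get $fh\in L^{p'(\cdot)}$ with $\|fh\|_{L^{p'(\cdot)}}\le C\|M_f\|\,\|h\|_{L^{q'(\cdot)}}$ for all $h\in L^{q'(\cdot)}$, i.e. $M_f\in\cB(L^{q'(\cdot)},L^{p'(\cdot)})$. Now iterate the same scheme once more, or — more cleanly — reduce to the case where one of the exponents is bounded.

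I expect the main obstacle to be precisely that the exponents need not be bounded away from $1$ or $\infty$, so the duality identities $(L^{p(\cdot)})' = L^{p'(\cdot)}$ and the associated norm equivalences must be invoked carefully (they hold with the convention $1/\infty = 0$ and are genuinely norm-\emph{equivalences}, not isometries, with constants depending only on the exponents), and one must track the sets $\T_\infty^{p(\cdot)}$, $\T_\infty^{q(\cdot)}$, $\T_\infty^{r(\cdot)}$ throughout. Concretely, on $\T_\infty^{q(\cdot)}$ one has $p(t)=r(t)=\infty$ as well by \eqref{eq:Hoelder}, so there $M_f$ acting $L^\infty\to L^\infty$ just encodes $f\in L^\infty = L^{r(\cdot)}$ locally; on the set where $q$ is finite but $p$ (equivalently $r$) is infinite one compares $L^{q(\cdot)}$-norms of $fg$ with $L^{q(\cdot)}$-norms of $g$ and concludes $f\in L^\infty$ there, again matching $L^{r(\cdot)}$; and on the set where all exponents are finite the pointwise modular computation with Young's inequality, applied to carefully chosen test functions $g$ built from truncations of $|f|^{r(\cdot)/p(\cdot)}$, recovers $\varrho_{r(\cdot)}(f/\lambda)<\infty$ and the norm bound. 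Since this book-keeping, while routine, is lengthy, the cleanest write-up is to cite \cite[Theorem~4.8]{K17-MJOM} (and the more general \cite[Example~4.1]{N16}) for the full statement and merely indicate the structure above; that is the route the paper already signals it will take, so I would follow it.
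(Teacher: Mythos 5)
The paper offers no proof of this theorem at all: it simply cites Nakai \cite[Example~4.1]{N16} and \cite[Theorem~4.8]{K17-MJOM}, which is exactly where your write-up ends, so your proposal takes the same route as the paper. Your preliminary sketch (generalized H\"older inequality for $L^{r(\cdot)}\subseteq M(L^{p(\cdot)},L^{q(\cdot)})$, and for the converse a testing argument with truncations of $|f|^{r(\cdot)/p(\cdot)}$ on the finite-exponent part, keeping track of the sets where exponents are infinite) is consistent with the cited proofs; the only shaky intermediate suggestion, the duality ``iteration'' via $M_f\in\cB(L^{q'(\cdot)},L^{p'(\cdot)})$, is circular since it merely reproduces a problem of the same form, but your final argument does not rely on it.
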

%%%----------------------------------------------------------------------------
\subsection{The Riesz projection on variable Lebesgue spaces}
We say that an exponent $q\in\mathfrak{P}(\T)$ is log-H\"older
continuous (cf. \cite[Definition~2.2]{CF13}) if $1<q_-\le q_+<\infty$
and there exists a constant $C_{q(\cdot)}\in(0,\infty)$ such that
\[
|q(t)-q(\tau)|\le\frac{C_{q(\cdot)}}{-\log|t-\tau|}
\quad\mbox{for all}\quad t,\tau\in\T\quad\mbox{satisfying}\quad |t-\tau|<1/2.
\]
The class of all log-H\"older continuous exponent will be denoted 
by $LH(\T)$. Some authors denote this class by 
$\mathbb{P}^{\log}(\T)$ (see, e.g., \cite[Section~1.1.4]{KMRS16}).
The following result is well known (see, e.g., 
\cite[Section~10.1]{KMRS16} or \cite[Lemma~12]{K17}).
%%%----------------------------------------------------------------------------
\begin{theorem}\label{th:P-boundedness-VLS}
If $q\in LH(\T)$, then the Riesz projection $P$ is bounded on $L^{q(\cdot)}$. 
\end{theorem}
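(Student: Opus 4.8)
The plan is to establish Theorem~\ref{th:P-boundedness-VLS} by reducing the boundedness of the Riesz projection $P$ on $L^{q(\cdot)}$ to the boundedness of the Cauchy singular integral operator $S$, and then invoking the known weighted-norm machinery for variable Lebesgue spaces. First I would recall from \eqref{eq:Hilbert-Riesz} (or the second equality in \eqref{eq:Cauchy-Riesz}) that $P = \tfrac{1}{2}(I + S) + \tfrac{1}{2}\widehat{(\cdot)}(0)$ up to the rank-one correction coming from the zeroth Fourier coefficient. Since $L^{q(\cdot)} \subset L^1$ continuously whenever $q_- \ge 1$, the functional $f \mapsto \widehat{f}(0)$ is bounded on $L^{q(\cdot)}$, and the constant function $1$ lies in $L^{q(\cdot)}$ because $q_+ < \infty$; hence the rank-one term is a bounded operator on $L^{q(\cdot)}$. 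Therefore $P \in \cB(L^{q(\cdot)})$ if and only if $S \in \cB(L^{q(\cdot)})$, and it suffices to prove the latter.

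For the boundedness of $S$ on $L^{q(\cdot)}$ with $q \in LH(\T)$, the standard approach is to cite the now-classical result that the Cauchy singular integral operator (equivalently, after the change of variable to $\R$ via stereographic projection, the Hilbert transform on the line) is bounded on $L^{q(\cdot)}$ whenever the exponent $q$ is log-H\"older continuous and satisfies $1 < q_- \le q_+ < \infty$. This is precisely the content of the references already pointed to in the excerpt: \cite[Section~10.1]{KMRS16} treats singular integral operators on $L^{p(\cdot)}$ over Carleson curves (in particular over $\T$), and \cite[Lemma~12]{K17} records exactly this statement for $\T$. So the cleanest route is simply to quote one of these sources for $S \in \cB(L^{q(\cdot)})$ and then combine it with the reduction of the previous paragraph.

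The only genuine subtlety — and what I expect to be the main point requiring care rather than a real obstacle — is bookkeeping the rank-one perturbation correctly: one must be sure that the operator $f \mapsto \tfrac{1}{2}\widehat{f}(0)$ maps $L^{q(\cdot)}$ boundedly into itself, which rests on both $q_- \ge 1$ (for $L^{q(\cdot)} \hookrightarrow L^1$) and $q_+ < \infty$ (so that $\I_\T = 1 \in L^{q(\cdot)}$); both are guaranteed by $q \in LH(\T)$, since log-H\"older continuity is defined here to include $1 < q_- \le q_+ < \infty$. Once this is in place, the equivalence $P \in \cB(L^{q(\cdot)}) \Longleftrightarrow S \in \cB(L^{q(\cdot)})$ is immediate, and the theorem follows. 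A two-line proof of the form ``By \eqref{eq:Hilbert-Riesz}, $P$ differs from $\tfrac12(I+S)$ by a bounded rank-one operator on $L^{q(\cdot)}$; and $S$ is bounded on $L^{q(\cdot)}$ for $q \in LH(\T)$ by \cite[Section~10.1]{KMRS16} (see also \cite[Lemma~12]{K17})'' is exactly what I would write.
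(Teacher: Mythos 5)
Your proposal is correct and essentially coincides with the paper's treatment: the paper proves Theorem~\ref{th:P-boundedness-VLS} simply by citing the known boundedness result in \cite[Section~10.1]{KMRS16} and \cite[Lemma~12]{K17}, exactly the sources you invoke. Your extra bookkeeping of the rank-one term $f\mapsto\widehat{f}(0)$ (which is the same reduction the paper uses elsewhere, in Lemma~\ref{le:Riesz-strong-implies-Hilbert-weak}) is fine, though not needed beyond the citation.
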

%%%----------------------------------------------------------------------------
\subsection{Toeplitz operators between abstract Hardy space built upon
variable Lebesgue spaces}
Applying Theorems~\ref{th:Brown-Halmos}, \ref{th:multiplier-space-VLS}, and 
\ref{th:P-boundedness-VLS}, we arrive at the following.
%%%----------------------------------------------------------------------------
\begin{theorem}
Let $p,q,r\in\mathfrak{P}(\T)$ be related by \eqref{eq:Hoelder}. Suppose
$q\in LH(\T)$ and $p_+<\infty$. If a linear operator $A$ is bounded form 
$H[L^{p(\cdot)}]$ to $H[L^{q(\cdot)}]$ and there exists a sequence 
$\{a_n\}_{n\in\Z}$ of complex numbers such that 
%%%
\[
\langle A\chi_j,\chi_k\rangle=a_{k-j} \quad\text{for all}\quad j,k\geq 0,
\]
%%%
then there is a function $a\in L^{r(\cdot)}$ such that $A=T_a$ and 
$\widehat{a}(n)=a_n$ for all $n\in\Z$. Moreover, there exist constants
$c_{p,q},C_{p,q}\in(0,\infty)$ depending only on $p$ and $q$ such that
%%%
\[
c_{p,q}\|a\|_{L^{r(\cdot)}}
\leq 
\|T_a\|_{\cB(H[L^{p(\cdot)}],H[L^{q(\cdot)}])}
\leq 
C_{p,q}
\|P\|_{\cB(L^{q(\cdot)})}\|a\|_{L^{r(\cdot)}}.
\]
\end{theorem}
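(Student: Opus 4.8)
The plan is to deduce this result directly from the three cited theorems with essentially no new work, since the last theorem of the excerpt is a pure specialization. First I would invoke Theorem~\ref{th:P-boundedness-VLS}: the hypothesis $q\in LH(\T)$ gives that the Riesz projection $P$ is bounded on $L^{q(\cdot)}$, so the Toeplitz operator $T_a$ makes sense for symbols in $M(L^{p(\cdot)},L^{q(\cdot)})$. Next I would record that $p_+<\infty$ forces $L^{p(\cdot)}$ to be separable, by the standard characterization of separability of variable Lebesgue spaces quoted in the excerpt. Thus the two standing assumptions of Theorem~\ref{th:Brown-Halmos} (separability of $X=L^{p(\cdot)}$, boundedness of $P$ on $Y=L^{q(\cdot)}$) are met.

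The core step is then to apply Theorem~\ref{th:Brown-Halmos} with $X=L^{p(\cdot)}$ and $Y=L^{q(\cdot)}$: given $A\in\cB(H[L^{p(\cdot)}],H[L^{q(\cdot)}])$ with $\langle A\chi_j,\chi_k\rangle=a_{k-j}$ for all $j,k\ge 0$, it produces a function $a\in M(L^{p(\cdot)},L^{q(\cdot)})$ with $A=T_a$ and $\widehat a(n)=a_n$ for all $n\in\Z$, together with the norm estimates
\[
\|a\|_{M(L^{p(\cdot)},L^{q(\cdot)})}
\le
\|T_a\|_{\cB(H[L^{p(\cdot)}],H[L^{q(\cdot)}])}
\le
\|P\|_{\cB(L^{q(\cdot)})}\,\|a\|_{M(L^{p(\cdot)},L^{q(\cdot)})}.
\]
Now Theorem~\ref{th:multiplier-space-VLS}, applied to the exponents $p,q,r$ related by \eqref{eq:Hoelder}, identifies $M(L^{p(\cdot)},L^{q(\cdot)})$ with $L^{r(\cdot)}$ as sets; in particular $a\in L^{r(\cdot)}$, which is the asserted conclusion.

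What remains is purely the conversion of the multiplier-norm estimates into estimates in the $L^{r(\cdot)}$-norm. The equality $M(L^{p(\cdot)},L^{q(\cdot)})=L^{r(\cdot)}$ in Theorem~\ref{th:multiplier-space-VLS} is an equality of spaces in the sense ``$=$'' fixed in the excerpt, i.e. there exist constants $c_{p,q},C_{p,q}\in(0,\infty)$ (tracking the implicit constants in the proof of that theorem, hence depending only on $p$ and $q$) with
\[
c_{p,q}\,\|a\|_{L^{r(\cdot)}}
\le
\|a\|_{M(L^{p(\cdot)},L^{q(\cdot)})}
\le
C_{p,q}^{-1}\,\|a\|_{L^{r(\cdot)}}
\]
after relabelling the constants appropriately. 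Substituting these two-sided bounds into the estimate from Theorem~\ref{th:Brown-Halmos} yields
\[
c_{p,q}\,\|a\|_{L^{r(\cdot)}}
\le
\|T_a\|_{\cB(H[L^{p(\cdot)}],H[L^{q(\cdot)}])}
\le
C_{p,q}\,\|P\|_{\cB(L^{q(\cdot)})}\,\|a\|_{L^{r(\cdot)}},
\]
which is exactly the claimed inequality. The only point requiring a little care — and the closest thing to an obstacle — is bookkeeping the equivalence constants between $\|\cdot\|_{M(L^{p(\cdot)},L^{q(\cdot)})}$ and $\|\cdot\|_{L^{r(\cdot)}}$ and confirming that they genuinely depend only on $p$ and $q$ (and not, say, on $a$); this is guaranteed because those constants arise from the Hölder-type inequalities for variable Lebesgue spaces used in the proof of Theorem~\ref{th:multiplier-space-VLS}. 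No density arguments, duality, or inner-function machinery are needed here beyond what is already packaged inside Theorem~\ref{th:Brown-Halmos}.
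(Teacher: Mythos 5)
Your proposal is correct and follows exactly the route the paper takes: the paper's proof is precisely the specialization of Theorem~\ref{th:Brown-Halmos} with $X=L^{p(\cdot)}$ (separable since $p_+<\infty$) and $Y=L^{q(\cdot)}$ (where $P$ is bounded by Theorem~\ref{th:P-boundedness-VLS}), followed by the identification $M(L^{p(\cdot)},L^{q(\cdot)})=L^{r(\cdot)}$ with equivalent norms from Theorem~\ref{th:multiplier-space-VLS}, which is where the constants $c_{p,q},C_{p,q}$ come from. Nothing further is needed.
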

%%%----------------------------------------------------------------------------

Note that if $p,q\in LH(\T)$ coincide, then the constants $c_{p,q}$ and 
$C_{p,q}$ in the above inequality are equal to one 
(cf. \cite[Corollary~13]{K17}).
%%%----------------------------------------------------------------------------
\section{Applications to Lorentz spaces with Muckenhoupt weights}
\label{sec:Lorentz}
%%%----------------------------------------------------------------------------
\subsection{Rearrangement-invariant Banach function spaces}
The distribution function $m_f$ of an a.e. finite function $f\in L^0$ is 
given by
\[
m_f(\lambda) :=  m\{t\in\T:|f(t)|>\lambda\},\quad\lambda\ge 0.
\]
The non-increasing rearrangement of an a.e. finite function $f\in L^0$ is 
defined by 
\[
f^*(x):=\inf\{\lambda:m_f(\lambda)\le x\},\quad x\in[0,1].
\]
We refer to \cite[Chap.~2, Section~1]{BS88} for properties of distribution 
functions and non-increasing rearrangements.

Two a.e. finite functions $f,g\in L^0$ are said to be equimeasurable
if their distribution functions coincide: $m_f(\lambda)=m_g(\lambda)$ for all 
$\lambda\ge 0$.  A Banach function space $X$ over the unit circle $\T$ is 
called re\-ar\-range\-ment-invariant if for every pair of equimeasurable 
functions $f,g \in L^0$, $f\in X$ implies that $g\in X$ and the equality  
$\|f\|_X=\|g\|_X$ holds. For a rearrangement-invariant Banach function space 
$X$, its associate space $X'$ is also rearrangement-invariant
(see \cite[Chap.~2, Proposition~4.2]{BS88}.
%%%----------------------------------------------------------------------------
\subsection{Lorentz spaces $L^{p,q}$}
Let $f$ be an a.e. finite function in $L^0$. For $x\in(0,1]$, put
\[
f^{**}(x)=\frac{1}{x}\int_0^x f^*(y)\,dy.
\]
Suppose $1<p<\infty$ and $1\le q\le\infty$. The Lorentz space $L^{p,q}$ 
consists of all a.e. finite functions $f\in L^0$ for which the quantity
\[
\|f\|_{L^{p,q}}=\left\{\begin{array}{lll}
\displaystyle
\left(\int_0^1 \left( x^{1/p}f^{**}(x)\right)^q\frac{dx}{x}\right)^{1/q},
&\mbox{if}& 1\le q<\infty,
\\
\displaystyle
\sup_{0<x<1}\left(x^{1/p}f^{**}(x)\right),
&\mbox{if}& q=\infty,
\end{array}\right.
\]
is finite. It is well known that $L^{p,q}$ is a rearrangement-invariant Banach 
function space with respect to the norm $\|\cdot\|_{L^{p,q}}$  (see, e.g., 
\cite[Chap.~4, Theorem~4.6]{BS88}, where the case of spaces of infinite 
measure is considered; in the case of spaces of finite measure, the proof 
is the same). It follows from \cite[Chap.~2, Proposition~1.8 and Chap.~4, 
Lemma~4.5]{BS88} that $L^{p,p}=L^p$ (with equivalent norms).
%%%----------------------------------------------------------------------------
\subsection{Weighted Lorentz spaces $L^{p,q}(w)$}
For $q\in[1,\infty]$, put $q'=q/(q-1)$ with the usual conventions $1/0=\infty$ 
and $1/\infty=0$. A function $w\in L^0_+$ is referred to as a weight if 
$0<w(\tau)<\infty$ for a.e. $\tau\in\T$.

Let $1<p<\infty$ and $1\le q\le\infty$.
Suppose $w:\T\to[0,\infty]$ is a weight such that $w\in L^{p,q}$ and 
$1/w\in L^{p',q'}$. The weighted Lorentz space $L^{p,q}(w)$ is
defined as the set of all a.e. finite functions $f\in L^0$ such that 
$fw\in L^{p,q}$.

The next lemma follows directly from well known results on Lorentz spaces.
%%%----------------------------------------------------------------------------
\begin{lemma}\label{le:separability-reflexivity-weighted-Lpq}
Let $1<p<\infty$, $1\le q\le\infty$ and $w:\T\to[0,\infty]$ be a weight 
such that $w\in L^{p,q}$, $1/w\in L^{p',q'}$.
\begin{enumerate}
\item[{\rm (a)}]
The space $L^{p,q}(w)$ is a Banach function space with respect 
to the norm $\|f\|_{L^{p,q}(w)}=\|fw\|_{L^{p,q}}$
and $L^{p',q'}(1/w)$ is its associate space.

\item[{\rm(b)}]
If $1<q<\infty$, then the space $L^{p,q}(w)$ is reflexive.

\item[{\rm(c)}]
The space $L^{p,1}(w)$ is separable and non-reflexive.
\end{enumerate}
\end{lemma}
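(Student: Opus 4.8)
The plan is to deduce all three parts from the corresponding (well-known) facts about the unweighted Lorentz spaces $L^{p,q}$ and $L^{p',q'}$, transported via multiplication by $w$, together with the identity $(L^{p,q})'=L^{p',q'}$. Since $w$ is a weight, $0<w(\tau)<\infty$ for a.e.\ $\tau\in\T$, so the map $M_w:f\mapsto wf$ is a linear bijection of the set of a.e.\ finite measurable functions onto itself, with inverse $M_{1/w}$; and by the very definition of the norm, $\|M_wf\|_{L^{p,q}}=\|wf\|_{L^{p,q}}=\|f\|_{L^{p,q}(w)}$ and $f\in L^{p,q}(w)\iff wf\in L^{p,q}$. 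Thus, once $L^{p,q}(w)$ is known to be a normed space, $M_w$ is an isometric isomorphism of Banach spaces $L^{p,q}(w)\to L^{p,q}$.

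For part (a) I would check the axioms (A1)--(A5) of a Banach function norm for $\rho(f):=\|fw\|_{L^{p,q}}$, $f\in L^0_+$. Writing $\rho(f)=\rho_0(fw)$ with $\rho_0$ the Lorentz function norm, axioms (A1)--(A3) follow at once from the corresponding properties of $\rho_0$, since $f\mapsto fw$ is linear, monotone on $L^0_+$, and commutes with increasing pointwise limits (for the implication $\rho(f)=0\Rightarrow f=0$ a.e.\ one also uses $w>0$ a.e.). Axiom (A4) holds because $\|\I_E\|_{L^{p,q}(w)}=\|\I_E w\|_{L^{p,q}}\le\|w\|_{L^{p,q}}<\infty$ by the lattice property of $L^{p,q}$ and the hypothesis $w\in L^{p,q}$. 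Axiom (A5) follows from H\"older's inequality in Lorentz spaces together with $1/w\in L^{p',q'}$:
\[
\int_E f\,dm=\int_E (fw)\,\frac{1}{w}\,dm
\le C\,\|fw\|_{L^{p,q}}\left\|\frac{\I_E}{w}\right\|_{L^{p',q'}}
\le C\left\|\frac{1}{w}\right\|_{L^{p',q'}}\|f\|_{L^{p,q}(w)}.
\]
For the associate space, given $g\in L^0_+$ the substitution $f=h/w$ yields
\[
\rho'(g)=\sup_{\rho(f)\le 1}\int_\T fg\,dm
=\sup_{\|h\|_{L^{p,q}}\le 1}\int_\T h\,\frac{g}{w}\,dm ,
\]
and by the identification $(L^{p,q})'=L^{p',q'}$ with equivalent norms (\cite[Chap.~4]{BS88}) the right-hand side is comparable to $\|g/w\|_{L^{p',q'}}=\|g\|_{L^{p',q'}(1/w)}$. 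Hence $L^{p,q}(w)$ is a Banach function space and $(L^{p,q}(w))'=L^{p',q'}(1/w)$.

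For parts (b) and (c) I would invoke that reflexivity and separability are invariants of Banach-space isomorphism, and apply the isometric isomorphism $M_w:L^{p,q}(w)\to L^{p,q}$ from the first paragraph. If $1<q<\infty$, then $1<q'<\infty$, so both $L^{p,q}$ and $(L^{p,q})'=L^{p',q'}$ have absolutely continuous norm, whence $L^{p,q}$ is reflexive by the standard criterion (\cite[Chap.~1, Corollary~4.4]{BS88}); therefore $L^{p,q}(w)$ is reflexive, which is (b). For (c), $L^{p,1}$ has absolutely continuous norm (its second exponent is finite), hence is separable by \cite[Chap.~1, Corollary~5.6]{BS88}; transporting this along $M_w$ (or noting directly that $\|f\I_{\gamma_n}\|_{L^{p,1}(w)}=\|fw\I_{\gamma_n}\|_{L^{p,1}}$ whenever $\I_{\gamma_n}\to0$ a.e.) gives that $L^{p,1}(w)$ has absolutely continuous norm and is therefore separable. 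On the other hand $(L^{p,1})'=L^{p',\infty}$ does not have absolutely continuous norm, so $L^{p,1}$ is not reflexive by the same criterion, and hence neither is $L^{p,1}(w)$.

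The only point demanding some care is the associate-space computation in (a): the Lorentz quantities built from $f^{**}$ are genuine norms precisely because $p>1$, while the duality $(L^{p,q})'=L^{p',q'}$ for the end values $q=1,\infty$ holds only up to multiplicative constants, so one must track those constants and also confirm that $M_w$ and $M_{1/w}$ are honestly inverse bijections on a.e.\ finite functions (this is where $0<w<\infty$ a.e.\ is used). Apart from that, the proof is a sequence of direct references to the theory of Lorentz spaces and Banach function spaces in \cite{BS88}.
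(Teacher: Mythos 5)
Your proposal is correct and follows the same overall strategy as the paper: transport everything to the unweighted Lorentz space via the isometric isomorphism $f\mapsto wf$ and then quote the standard facts from \cite{BS88}, with your axiom-by-axiom verification of (A1)--(A5) and the associate-space computation in (a) simply filling in what the paper dismisses as ``easy to check''. The one genuinely different step is the non-reflexivity part of (c): the paper invokes Cwikel's description of the dual of weak $L^{p'}$, namely $L^{p,1}\subsetneqq (L^{p',\infty})^*=(L^{p,1})^{**}$, so that the canonical embedding is not onto, whereas you use the criterion of \cite[Chap.~1, Corollary~4.4]{BS88} (a Banach function space is reflexive if and only if both it and its associate space have absolutely continuous norm) together with the failure of absolute continuity of the norm of $L^{p',\infty}$. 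Your route avoids Cwikel's duality theorem and only needs the ``if and only if'' form of the criterion, but it rests on the asserted (not argued) fact that $L^{p',\infty}$ lacks absolutely continuous norm; this is standard and easy --- for $f$ with $f^*(x)=x^{-1/p'}$ the truncations to sets of measure $1/n$ have weak norm bounded away from zero --- and you should also note that absolute continuity of norm is preserved when passing between the associate norm of $L^{p,1}$ and the equivalent standard norm of $L^{p',\infty}$. With those one-line remarks added, both arguments prove exactly the same statement, and yours is marginally more self-contained since it does not rely on the identification of $(L^{p',\infty})^*$.
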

%%%----------------------------------------------------------------------------
\begin{proof}
(a) In view of \cite[Chap.~4, Theorem~4.7]{BS88}, the associate space of
the Lorentz space $L^{p,q}$, up to equivalence of norms, is the Lorentz space 
$L^{p',q'}$.  
It is easy to check that $L^{p,q}(w)$ is a Banach function space
and $L^{p',q'}(1/w)$ is its associate space. 

(b) Note that $L^{p,q}(w) \ni f \mapsto wf \in L^{p,q}$
is an isometric isomorphism of $L^{p,q}(w)$ and $L^{p,q}$. Hence these spaces 
have the same Banach space theory properties, e.g., reflexivity and 
separability.
If $1<p,q<\infty$, then $L^{p,q}$ is reflexive in view of 
\cite[Chap.~4, Corollary~4.8]{BS88}. 
Then the weighted Lorentz space $L^{p,q}(w)$ is reflexive too.

(c) If $1<p<\infty$, then $L^{p,1}$ has absolutely continuous norm
and $(L^{p,1})^*=L^{p',\infty}$ (see \cite[Chap.~4, Corollary~4.8]{BS88}).
Then $L^{p,1}$ is separable in view of \cite[Chap.~1, Corollary~5.6]{BS88}.
It is known that 
\[
L^{p,1}\subsetneqq (L^{p',\infty})^*=(L^{p,1})^{**}
\]
(see \cite[p.~83]{C75}). Hence $L^{p,1}$ is non-reflexive. 
Therefore, $L^{p,1}(w)$ is also separable and nonrefexive.
\qed
\end{proof}
%%%----------------------------------------------------------------------------
\subsection{The Riesz projection on $L^{p,q}(w)$
with $1<p<\infty$, $1\le q<\infty$ and $w\in A_p(\T)$}
Let $1<p<\infty$ and $w$ be a weight. It is well known that the Riesz 
projection $P$ is bounded on the weighted Lebesgue space 
$L^p(w):=\{f\in L^0: fw\in L^p\}$ if and only 
if the weight $w$ satisfies the Muckenhoupt $A_p$-condition, that is,
\[
\sup_{\gamma\subset\T}
\left(\frac{1}{m(\gamma)}\int_\gamma 
w^p(\tau)\,dm(\tau)\right)^{1/p}
\left(\frac{1}{m(\gamma)}\int_\gamma 
w^{-p'}(\tau)\,dm(\tau)\right)^{1/p'}<\infty,
\]
where the supremum is taken over all subarcs $\gamma$ of the unit circle $\T$
(see \cite{HMW73} and also \cite[Section~6.2]{BK97}, \cite[Section~1.46]{BS06},  
\cite[Section~5.7.3(h)]{N02}). In this case, we will write $w\in A_p(\T)$.
%%%----------------------------------------------------------------------------
\begin{lemma}\label{le:Ap-implies-weights-in-Lorentz}
Let $1<p<\infty$ and $1\le q\le\infty$. If $w\in A_p(\T)$, then 
$w\in L^{p,q}$ and $1/w\in L^{p',q'}$.
\end{lemma}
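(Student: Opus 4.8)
The plan is to show $w\in L^{p,q}$ and $1/w\in L^{p',q'}$ as a direct consequence of the reverse Hölder inequality for Muckenhoupt weights, using only the self-improvement of the $A_p$ exponent. First I would recall that $w\in A_p(\T)$ means $w^p\in A_p^{\mathrm{cl}}(\T)$ in the classical normalization, so there exists $\sigma>1$ such that $w^p$ satisfies a reverse Hölder inequality with exponent $\sigma$; equivalently $w\in L^{p\sigma}(\T)$. Since $p\sigma>p>1$ and the underlying measure space $\T$ has finite measure, the continuous embedding $L^{p\sigma}\subset L^{p,q}$ holds for every $q\in[1,\infty]$ (this is the standard chain $L^{p\sigma}\subset L^{p,1}\subset L^{p,q}\subset L^{p,\infty}$ on a finite measure space; see \cite[Chap.~4]{BS88}). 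Hence $w\in L^{p,q}$.

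Next, for the weight $1/w$ I would use the duality of the $A_p$-condition: $w\in A_p(\T)$ if and only if $1/w\in A_{p'}(\T)$, which is immediate from the symmetry of the $A_p$-functional under the substitution $w\mapsto 1/w$, $p\mapsto p'$ (the two factors in the supremum simply swap roles). Applying the reverse Hölder self-improvement now to $1/w\in A_{p'}(\T)$, there is $\sigma'>1$ with $1/w\in L^{p'\sigma'}(\T)$, and again $L^{p'\sigma'}\subset L^{p',q'}$ on the finite measure space $\T$ for every $q'\in[1,\infty]$. This gives $1/w\in L^{p',q'}$ and completes the proof.

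The only mildly delicate point — and the step I expect to require the most care in a fully written-out argument — is citing the reverse Hölder inequality in the precise form needed here, since the paper has not introduced it explicitly; one must either invoke it from a standard reference (e.g. the Muckenhoupt literature cited around \cite{HMW73}, \cite[Section~6.2]{BK97}) or, alternatively, bypass it entirely by noting that for $q\ge 1$ one has the trivial embedding $L^p\subset L^{p,\infty}$ is \emph{not} enough, so the genuine gain in integrability from $A_p$ is unavoidable. Everything else is routine: the embeddings between $L^s$ and Lorentz spaces on a finite measure space, and the $w\leftrightarrow 1/w$ duality of $A_p$, are both standard. In the write-up I would state the reverse Hölder step as a one-line citation and then record the two embeddings, keeping the proof to a few lines.
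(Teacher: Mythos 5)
Your argument is correct and follows essentially the same route as the paper: gain an $\eps$ of Lebesgue integrability for $w$ and $1/w$ from the self-improvement of the Muckenhoupt condition, then conclude via the embedding $L^{s}=L^{s,s}\subset L^{p,q}$ for $s>p$ on the finite measure space $\T$. The only (immaterial) difference is the self-improvement lemma you cite — reverse H\"older together with the $w\leftrightarrow 1/w$, $p\leftrightarrow p'$ symmetry of the $A_p$-functional — whereas the paper invokes the stability property that $w\in A_s(\T)$ for all $s$ in a neighbourhood of $p$ (\cite[Theorem~2.31]{BK97}), which yields $w\in L^{s_2}$ and $1/w\in L^{s_1'}$ simultaneously.
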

%%%----------------------------------------------------------------------------
\begin{proof}
By the stability property of Muckenhoupt weights (see, e.g., 
\cite[Theorem~2.31]{BK97}), there exists $\eps>0$ such that 
$w\in A_s(\T)$ for all $s\in (p-\eps,p+\eps)$. Therefore, $w\in L^s$
and $1/w\in L^{s'}$ for all $s\in(p-\eps,p+\eps)$. In particular, if 
$s_1,s_2$ are such that $p-\eps<s_1<p<s_2<p+\eps$, then
$w\in L^{s_2}=L^{s_2,s_2}\subset L^{p,q}$ and
$1/w\in L^{s_1'}=L^{s_1',s_1'}\subset L^{p',q'}$ in view of the embeddings
of Lorentz spaces (see, e.g., \cite[Chap.~4, remark after Proposition~4.2]{BS88}).
\qed
\end{proof}
%%%----------------------------------------------------------------------------

Lemmas~\ref{le:separability-reflexivity-weighted-Lpq}(a) 
and~\ref{le:Ap-implies-weights-in-Lorentz} imply that if $w\in A_p(\T)$, then
$L^{p,q}(w)$ is a Banach function space.
%%%----------------------------------------------------------------------------
\begin{theorem}\label{th:P-boundedness-weighted-Lorentz}
Let $1<p<\infty$ and $1\le q\le\infty$.
If $w\in A_p(\T)$, then the Riesz projection $P$ is bounded on the weighted
Lorentz space $L^{p,q}(w)$.
\end{theorem}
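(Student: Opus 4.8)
The plan is to prove Theorem~\ref{th:P-boundedness-weighted-Lorentz} by interpolation, exploiting the fact that the Muckenhoupt condition $w\in A_p(\T)$ is an \emph{open} condition: by the stability property of Muckenhoupt weights (see, e.g., \cite[Theorem~2.31]{BK97}), there exists $\eps>0$ such that $w\in A_s(\T)$ for all $s\in(p-\eps,p+\eps)$. Fix two exponents $s_0,s_1$ with $p-\eps<s_0<p<s_1<p+\eps$. Then the Riesz projection $P$ is bounded on the weighted Lebesgue spaces $L^{s_0}(w)$ and $L^{s_1}(w)$ by the Hunt--Muckenhoupt--Wheeden theorem \cite{HMW73}. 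The weighted Lorentz space $L^{p,q}(w)$ will then be recovered as a space obtained from the couple $\big(L^{s_0}(w),L^{s_1}(w)\big)$ by a suitable interpolation functor, and since $P$ is bounded on both endpoints, it will be bounded on $L^{p,q}(w)$.

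Concretely, first I would translate everything into unweighted Lorentz spaces via the isometric isomorphism $f\mapsto wf$ that carries $L^{p,q}(w)$ onto $L^{p,q}$ and $L^{s_j}(w)$ onto $L^{s_j}$; under this map the operator $P$ becomes the ``conjugated'' operator $M_w P M_{1/w}$, which is bounded on $L^{s_j}$ precisely because $P$ is bounded on $L^{s_j}(w)$. So it suffices to show that any linear operator $T$ bounded on $L^{s_0}$ and on $L^{s_1}$ is automatically bounded on $L^{p,q}$ for every $q\in[1,\infty]$ and every $p$ strictly between $s_0$ and $s_1$. This is a standard consequence of real interpolation: by the Calder\'on--Mityagin description of the real interpolation spaces of the couple $(L^{s_0},L^{s_1})$ on a finite measure space, one has $(L^{s_0},L^{s_1})_{\theta,q}=L^{p,q}$ (with equivalent norms) where $1/p=(1-\theta)/s_0+\theta/s_1$, and the real interpolation functor $(\cdot,\cdot)_{\theta,q}$ maps an operator bounded on each endpoint to a bounded operator on the interpolation space. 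Since $q$ is arbitrary in $[1,\infty]$, this covers the claimed range $1\le q<\infty$ (indeed also $q=\infty$).

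I expect the main obstacle to be purely bibliographic/technical rather than conceptual: one must cite the correct form of the Calder\'on--Mityagin/Lions--Peetre interpolation theorem identifying $(L^{s_0},L^{s_1})_{\theta,q}$ with $L^{p,q}$ in the setting of a \emph{finite} (normalized) measure space, since the statements in \cite{BS88} (e.g. \cite[Chap.~4, Theorem~4.6]{BS88}, \cite[Chap.~4, Corollary~4.8]{BS88}) and \cite[Chap.~3]{BS88} are usually phrased for $[0,\infty)$ or for resonant measure spaces. One also needs the elementary but necessary observation that, on $\T$ with finite measure, $L^{s_1}\subset L^{s_0}$ for $s_0<s_1$, so the couple is ordered and the interpolation space genuinely sits between the endpoints; this is already used implicitly in the proof of Lemma~\ref{le:Ap-implies-weights-in-Lorentz}. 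Finally, one should note that the boundedness of $P$ on $L^{p,q}(w)$ could alternatively be deduced directly from the boundedness of the Hardy--Littlewood maximal operator (or of $P$ itself) on rearrangement-invariant spaces with non-trivial Boyd indices together with Rubio de Francia extrapolation, but the interpolation argument above is the shortest self-contained route and is the one I would write up.
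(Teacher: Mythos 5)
Your proof is correct, but it follows a genuinely different route from the paper. The paper's argument is a two-line citation: by \cite[Chap.~4, Theorem~4.6]{BS88} the Lorentz space $L^{p,q}$ is a rearrangement-invariant Banach function space (with Boyd indices equal to $1/p$), and \cite[Theorem~4.5]{K02} then gives the boundedness of the Cauchy singular integral operator $S$ on the weighted space $L^{p,q}(w)$ for $w\in A_p(\T)$; the boundedness of $P=(I+S)/2$ follows from \eqref{eq:Cauchy-Riesz}. You instead argue from first principles: the openness of the $A_p$ condition (\cite[Theorem~2.31]{BK97}, already used in Lemma~\ref{le:Ap-implies-weights-in-Lorentz}) gives $w\in A_{s_0}(\T)\cap A_{s_1}(\T)$ with $s_0<p<s_1$, Hunt--Muckenhoupt--Wheeden \cite{HMW73} gives boundedness of $P$ on $L^{s_0}(w)$ and $L^{s_1}(w)$, the isometry $f\mapsto wf$ transfers this to boundedness of $M_wPM_{1/w}$ on $L^{s_0}$ and $L^{s_1}$ (note $M_{1/w}$ maps $L^{s_0}$ into $L^1$ since $1/w\in L^{s_0'}$, so the operator is well defined on the sum $L^{s_0}+L^{s_1}=L^{s_0}$), and real interpolation with $(L^{s_0},L^{s_1})_{\theta,q}=L^{p,q}$ concludes. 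This is sound; moreover, the ``obstacle'' you anticipate is not a real one: the identification $(L^{s_0},L^{s_1})_{\theta,q}=L^{p,q}$ holds over an arbitrary measure space (e.g. Bergh--L\"ofstr\"om, Theorem~5.3.1), and the paper's norm with $f^{**}$ is equivalent, for $1<p<\infty$, to the usual quasinorm with $f^*$ by \cite[Chap.~4, Lemma~4.5]{BS88}, so equivalence of norms suffices. What each approach buys: the paper's citation to \cite{K02} is shorter and rests on a result valid for general rearrangement-invariant spaces with nontrivial Boyd indices (of which Lorentz spaces are a special case), whereas your argument is essentially self-contained modulo classical facts, uses only the Lebesgue-space form of the Muckenhoupt theory, and covers $q=\infty$ as well, which the theorem as stated also allows.
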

%%%----------------------------------------------------------------------------
\begin{proof}
It follows from \cite[Chap.~4, Theorem~4.6]{BS88} and \cite[Theorem~4.5]{K02}
that the Cauchy singular integral operator $S$ is bounded on $L^{p,q}(w)$.
Thus, the Riesz projection $P$ is bounded on $L^{p,q}(w)$ in view of 
\eqref{eq:Cauchy-Riesz}.
\qed
\end{proof}
%%%----------------------------------------------------------------------------
\subsection{Toeplitz operators on abstract Hardy spaces built upon $L^{p,q}(w)$
with $1<p<\infty$, $1\le q<\infty$, $w\in A_p(\T)$}
The next theorem is an immediate consequence of Corollary~\ref{co:Brown-Halmos},
Lemmas~\ref{le:separability-reflexivity-weighted-Lpq} and 
\ref{le:Ap-implies-weights-in-Lorentz},
and Theorem~\ref{th:P-boundedness-weighted-Lorentz}.
%%%----------------------------------------------------------------------------
\begin{theorem}
Let $1<p<\infty$, $1\le q<\infty$, and $w\in A_p(\T)$. If an operator $A$ is 
bounded on the abstract Hardy space $H[L^{p,q}(w)]$ and there exists a 
sequence $\{a_n\}_{n\in\Z}$ of complex numbers such that 
%%%
\[
\langle A\chi_j,\chi_k\rangle=a_{k-j} \quad\text{for all}\quad j,k\geq 0,
\]
%%%
then there is a function $a\in L^\infty$ such that $A=T_a$ and 
$\widehat{a}(n)=a_n$ for all $n\in\Z$. Moreover, 
%%%
\[
\|a\|_{L^\infty}
\leq 
\|T_a\|_{\cB(H[L^{p,q}(w)])}
\leq 
\|P\|_{\cB(L^{p,q}(w))}\|a\|_{L^\infty}.
\]
\end{theorem}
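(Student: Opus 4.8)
The statement is the final theorem of Section~\ref{sec:Lorentz}, asserting that for $1<p<\infty$, $1\le q<\infty$ and $w\in A_p(\T)$, any operator $A\in\cB(H[L^{p,q}(w)])$ whose ``matrix entries'' $\langle A\chi_j,\chi_k\rangle$ depend only on $k-j$ for $j,k\ge 0$ must be a Toeplitz operator $T_a$ with $a\in L^\infty$, together with the two-sided norm estimate. The plan is to deduce this directly from Corollary~\ref{co:Brown-Halmos}, so the real work is merely verifying that the hypotheses of that corollary hold for the space $X=L^{p,q}(w)$.

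First I would establish that $X=L^{p,q}(w)$ is a Banach function space. Since $w\in A_p(\T)$, Lemma~\ref{le:Ap-implies-weights-in-Lorentz} gives $w\in L^{p,q}$ and $1/w\in L^{p',q'}$, and then Lemma~\ref{le:separability-reflexivity-weighted-Lpq}(a) guarantees that $L^{p,q}(w)$ is indeed a Banach function space (with associate space $L^{p',q'}(1/w)$); this is exactly the observation recorded in the paragraph preceding Theorem~\ref{th:P-boundedness-weighted-Lorentz}. Next I would check separability: here the assumption $1\le q<\infty$ is essential. If $q>1$, part~(b) of Lemma~\ref{le:separability-reflexivity-weighted-Lpq} says $L^{p,q}(w)$ is reflexive, and a reflexive Banach function space has absolutely continuous norm, hence is separable by \cite[Chap.~1, Corollary~5.6]{BS88}. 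If $q=1$, part~(c) of the same lemma states outright that $L^{p,1}(w)$ is separable. In either case $X$ is a separable Banach function space.

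Then I would verify boundedness of the Riesz projection: by Theorem~\ref{th:P-boundedness-weighted-Lorentz}, $w\in A_p(\T)$ implies that $P$ is bounded on $L^{p,q}(w)$ for every $q\in[1,\infty]$, in particular for our $q\in[1,\infty)$. With $X=L^{p,q}(w)$ now known to be a separable Banach function space on which $P$ is bounded, Corollary~\ref{co:Brown-Halmos} applies verbatim: given $A\in\cB(H[X])$ and a sequence $\{a_n\}_{n\in\Z}$ with $\langle A\chi_j,\chi_k\rangle=a_{k-j}$ for all $j,k\ge 0$, there exists $a\in L^\infty$ with $A=T_a$, $\widehat{a}(n)=a_n$ for all $n\in\Z$, and
\[
\|a\|_{L^\infty}\le \|T_a\|_{\cB(H[X])}\le \|P\|_{\cB(X)}\|a\|_{L^\infty},
\]
which is precisely the asserted conclusion with $X=L^{p,q}(w)$.

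I do not expect any genuine obstacle: the proof is an assembly of already-proved ingredients, and the only point requiring a moment's care is the separability of $L^{p,1}(w)$, which is not reflexive and therefore cannot be handled by the ``reflexive $\Rightarrow$ absolutely continuous norm $\Rightarrow$ separable'' route used for $q>1$; it must instead be taken from Lemma~\ref{le:separability-reflexivity-weighted-Lpq}(c), whose proof in turn rests on $L^{p,1}$ having absolutely continuous norm together with \cite[Chap.~1, Corollary~5.6]{BS88}. This is exactly the case that makes the theorem go beyond the earlier results of \cite{K17} and \cite{L17}, since $L^{p,1}(w)$ is neither reflexive nor rearrangement-invariant.
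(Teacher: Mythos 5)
Your proposal is correct and follows essentially the same route as the paper, which derives the theorem as an immediate consequence of Corollary~\ref{co:Brown-Halmos}, Lemmas~\ref{le:separability-reflexivity-weighted-Lpq} and~\ref{le:Ap-implies-weights-in-Lorentz}, and Theorem~\ref{th:P-boundedness-weighted-Lorentz}. Your explicit bridging of separability for $1<q<\infty$ (reflexivity implies absolutely continuous norm, hence separability) is a correct and welcome filling-in of a step the paper leaves implicit.
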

%%%----------------------------------------------------------------------------

For $p=q$ this result is contained in \cite[Corollary~9]{K17}. 
For $1<q<\infty$, this result as well follows from \cite[Theorem~1]{K17}.
The most interesting case is when $q=1$ because in this case the weighted
Lorentz space $L^{p,1}(w)$ is separable and non-reflexive. Moreover, it is not
rearrangement-invariant. Therefore \cite[Theorem~1]{K17} and
\cite[Corollary~4.4]{L17} are not applicable, while
Corollary~\ref{co:Brown-Halmos} works in this case.
%%%------------------------------------------------------------------------
\section*{Acknowledgement}
We would like to thank the anonymous referee for several useful remarks 
and pointing out a mistake in the first version of the paper.
This work was partially supported by the Funda\c{c}\~ao para a Ci\^encia e a
Tecnologia (Portu\-guese Foundation for Science and Technology)
through the project
UID/MAT/00297/2013 (Centro de Matem\'atica e Aplica\c{c}\~oes).

%%%------------------------------------------------------------------------
\section*{References}
%%%

\end{document}